\definecolor{darkblue}{rgb}{0, 0, 0.6}
\def \CSA{\mathfrak h}
\def \bs {\boldsymbol}
\def\pa{\partial}
\def \U#1{  \le[\begin{array}{cc} 1& #1\\
0 &1
\end{array}
\ri]}
\def \L#1{  \le[\begin{array}{cc} 1& 0\\
#1 &1
\end{array}
\ri]}
\def \wt{\widetilde}
\def \CM{\mathbb G}
\def\R{\mathbb R}
\def \S{\mathfrak S}
\def\res{\mathop{\mathrm {res}}}
\def \s{\sigma}
\def \l{\lambda}
\def \Id{\mathbf 1}
\def \QED{\hfill $\blacksquare$\par\vskip 4pt}
\def \d{\mathrm d}
\def \le{\left}
\def\nn{\nonumber}
\def \ri{\right}
\def\C{\mathbb C}
\def \wh {\widehat}
\theoremstyle{plain} 
\newtheorem{thm}{Theorem}[section] 
\newtheorem{lem}[thm]{Lemma} 
\newtheorem{prop}[thm]{Proposition}
\newtheorem{defn}[thm]{Definition}
\newtheorem{rmk}[thm]{Remark}
\DeclareMathOperator{\Tr}{Tr}
\def \be#1\ee{\begin{align}#1\end{align}}
\def \tr{\mathrm {Tr}}
\tikzstyle directed=[postaction={decorate,decoration={markings,
		mark=at position .65 with {\arrow{latex}}}}]
\renewcommand{\theequation}{\arabic{section}.\arabic{equation}}
\begin{document}


\begin{center}
\begin{huge}
{\bf  Stokes manifolds and cluster algebras}
\end{huge}\\
\bigskip
M. Bertola$^{\dagger\ddagger}$\footnote{Marco.Bertola@\{concordia.ca, sissa.it\}},  
S. Tarricone$^{\dagger\diamondsuit}$ \footnote{tarricone@math.univ-angers.fr, Sofia.Tarricone@concordia.ca},
\\
\bigskip
\begin{small}
$^{\dagger}$ {\it   Department of Mathematics and
Statistics, Concordia University\\ 1455 de Maisonneuve W., Montr\'eal, Qu\'ebec,
Canada H3G 1M8} \\
\smallskip
$^{\ddagger}$ {\it  SISSA/ISAS,  Area of Mathematics\\ via Bonomea 265, 34136 Trieste, Italy }\\
$^{\diamondsuit}$ LAREMA, UMR 6093, UNIV Angers, CNRS, SFR Math-Stic, France
\end{small}
\vspace{0.5cm}
\end{center}

 \begin{center} \bf Abstract 
 \end{center}
Stokes' manifolds, also known as wild character varieties, carry a natural symplectic structure. 
Our goal is to provide explicit log-canonical coordinates for these natural Poisson structures on the  Stokes' manifolds of polynomial connections of rank $2$, thus including the second Painlev\'e\ hierarchy. This construction provides the explicit linearization of the Poisson structure first discovered by Flaschka and Newell  and then rediscovered and generalized by Boalch. 
We show that, for a connection of degree $K$, the Stokes' manifold is a cluster manifold of type $A_{2K}$. The main idea is then applied to express explicitly also the log--canonical coordinates for the Poisson bracket introduced by Ugaglia in the context of Frobenius manifolds and then also applied by Bondal in the study of the symplectic groupoid of quadratic forms.

\tableofcontents
\section{Introduction and results}
Rational connections on the Riemann sphere (and more general Riemann surfaces) are intimately connected with the theory of integrable systems in general, and Painlev\'e\ equations in particular (or Hitchin's systems). 

The (generalized) monodromy map which associates the monodromy, Stokes' and connection matrices to a rational connection, has been shown to provide integrals of motion for all the Painlev\'e\ equations (and hence parametrize their solutions) and the generalizations thereof that culminated in the eighties with the work of the Japanese school \cite{JMU1, JMU2, JMU3}. 

The Painlev\'e\ equations themselves can be cast as Hamiltonian systems \cite{Okamoto}; the underlying Poisson structure coincides with the standard Poisson structure on coadjoint orbits of suitably defined loop groups \cite{Adams}.

The (extended) monodromy map provides a connection between these ODE (rational connections) and the representations theory of the fundamental group of the punctured sphere, which are called ``character varieties'' (for the case of Fuchsian singularities) or certain generalizations thereof that go under the picturesque name of ``wild character varieties'' \cite{wildcharacter}.\\[1pt]

The natural question, from the point of view of symplectic geometry, is to identify the push--forward of the Poisson structure from the space of coadjoint orbits to the space of extended monodromy data. 

Possibly the first paper to address this issue was \cite{FlaschkaNewell} in the eighties; they considered the prototypical example of ``wild'' character variety, namely, the Stokes' phenomenon of a rank-two differential equation with polynomial coefficients  , which underlies the Painlev\'e\ II hierarchy. They provided the explicit expressions for the Poisson brackets between the Stokes' parameters which are the image, under the generalized monodromy map, of the  Lie-Poisson structure on the matrix of the differential equation. 

On a similar track, but for the regular monodromy map, in the work  \cite{KorSam} it was first realized that the Lie-Poisson structure on a Fuchsian differential connection induces the Goldman Poisson structure \cite{Goldman}  on the matrices of the monodromy representation.

The seminal paper of Flaschka and Newell was largely ignored for almost twenty years and then its idea applied to a different type of rational connections by Ugaglia \cite{Ugaglia} in relationship with the theory of Frobenius manifolds; here the connection has one Fuchsian and one irregular singularity of Poincar\'e\ rank 2 at infinity.

The topic was then put into the framework of quasi--Hamiltonian geometry in a series of papers by Boalch \cite{Boalch1,smapg, BoalchDuke} in the beginning of the millennium, where he re-derived the symplectic and Poisson geometry induced on the space of generalized monodromy data.

A practical issue that has not been given much consideration until lately, is that of providing explicit Darboux coordinates (or, rather, log-canonical) for these structures; 
in the last decade the works of Fock-Goncharov \cite{FG} have shown the connection between the geometry of character varieties and the emergent study of cluster algebras. 
Their setup is very general but phrased in a setting which is abstracted from the direct relationship with the theory of the monodromy map.

In the recent \cite{Bertola:2019ws}  the Fock-Goncharov formalism has been shown to provide explicit log-canonical coordinates for the Goldman Poisson structure on the character variety of an arbitrary punctured Riemann surface.

The relationship of the Fock--Goncharov coordinates with the Stokes' phenomenon of second order ODEs on Riemann surfaces was pointed out in \cite{Nakanishi}; however this relationship is for a different Stokes' phenomenon, corresponding to the Wentzel-Kramers-Brillouin asymptotic expansion with respect singularly perturbed ODEs. 

The present paper instead considers the classical Stokes' phenomenon for (polynomial) ODEs and provides explicit log-canonical (and Darboux) coordinates for the Poisson structure on the simplest class of such wild character varieties. 
We thus establish the main result that can be summarized  in the following theorem \\[0pt]

\noindent {\bf Theorem.}\ \   {\it
The wild character variety of an $sl_2$ polynomial connection of degree $K$  on the Riemann sphere is a cluster manifold of type $A_{2K}$. The log--canonical Poisson (symplectic) structure on this cluster variety coincides with the push--forward by the monodromy map of the Lie-Poisson structure.
}

\paragraph{Detailed description of results.}
In Section \ref{general} we provide a self--contained explanation of the symplectic structure on Stokes' matrices associated to an arbitrary {\it polynomial} differential equation in $sl_n$; the result (in even greater generality) was derived by Boalch \cite{BoalchDuke}, generalizing the result of \cite{FlaschkaNewell} and \cite{Ugaglia}. The proof we provide, however, is completely different from loc. cit. and uses ideas developed around the notion of the {\it Malgrange one-form}  in \cite{BertolaIsoTau, BertolaCorrection}.

We then consider in detail the case of $sl_2$ polynomial differential equation of arbitrary Poincar\'e\ rank $K+1$.  The goal is to provide explicit parametrizations for the Stokes' parameters and to show that  the coordinate charts thus defined glue together in the fashion of a cluster manifold of type $A_{2K}$. 
We recall that the Stokes' manifold in this case consists in the following algebraic variety
\begin{equation}\label{stokesmanifoldintro}
\mathfrak{S}_{K}=\left\lbrace 
\begin{pmatrix}
1&s_{1}\\
0&1
\end{pmatrix} 
\begin{pmatrix}
1&0\\
s_{2}&1
\end{pmatrix} \dots
\begin{pmatrix}
1&s_{2K+1}\\
0&1
\end{pmatrix}
\begin{pmatrix}
1&0\\
s_{2K+2}&1
\end{pmatrix} \lambda^{\s_3} = \Id \;\ \  \text{with} \; s_i \in \mathbb{C} , \ \ \l\in \C^\times
\right\rbrace
\end{equation} 
of complex dimension $2K$ (here and below $\s_{1,2,3}$ denote the three Pauli matrices).

The results of \cite{FlaschkaNewell} can be summarized in the following Poisson brackets (see Def. \ref{FNPB}): 
\be
\label{PB_K} 
\Big\{s_j,s_l \Big\}_{_{FN}} & =  \delta_{j,l-1} - \frac{\delta_{j,1}\delta_{l, 2K+2}} {\lambda^{2}}  + (-1)^{j-l+1} s_j s_l, \qquad j<l.\nn\\
\Big\{ s_j,\lambda \Big\}_{_{FN}} &=(-1)^{j}  s_j\lambda.
\ee
It is interesting  here to observe that the brackets \eqref{PB_K} satisfy the Jacobi identity in fact on the manifold of dimension $2K+3$  consisting of the parameters $s_j$'s and $\lambda$ without any restriction, where it has a single Casimir function given by the trace of the product in the left hand side of the equality in   \eqref{stokesmanifoldintro}. The variety $\mathfrak S_K$ is a Poisson submanifold and the bracket becomes nondegenerate (see Prop. \ref{propideal})
The explicit parametrization of the Stokes' data on the constrained manifold \eqref{stokesmanifoldintro} is given by (see Lemma \ref{prop:1})
\begin{align*}
&s_1 = -y_1 ^{-2}\\ 
&s_{2k} = (1+y_{2k}^2) \prod_{1\leq j\leq 2k} y_{j}^{(-1)^{j+1}2},\;\; k=1,\dots , K \\
& s_{2k+1} = -(1+y_{2k+1}^2) \prod_{1\leq j\leq 2k+1} y_{j}^{(-1)^j 2}, \;\; k=1,\dots , K-1\\
&s_{2K+1} = - \prod_{1\leq j\leq 2K} y_{j}^{(-1)^j 2} ,\\
&s_{2K+2} = y_1^{2}\left( 1+y_2^2\left( \dots \left(1+y_{2K}^2\right) \dots \right) \right) \prod_{j=1}^{K} y_{2j}^{-4}, \\
&\lambda = \prod_{j=1}^{K}y_{2j}^2 .
\end{align*}
and these coordinates $y_1,\dots, y_{2K}$ are log--canonical; the matrix of their Poisson brackets is  (Lemma \ref{lemmaPK})
\be
\mathbf{P}_{K} = \frac{1}{4}
\begin{pmatrix}
0 & 1 & 0 &0 & 0 & \dots &0\\
-1&0&1 &0 & 0 & \dots &0 \\
0&-1&0&1&0&\dots&0\\
\vdots & &\ddots & \ddots & \ddots& &\vdots\\
\vdots & &&\ddots & \ddots & \ddots& \vdots\\
0&0&\dots & & -1&0&1\\
0&0&\dots & & 0 & -1 &0
\end{pmatrix}
\ee 
The particular parametrization described here corresponds to a certain triangulation (see Fig. \ref{fig:triang and dynk})  of the $2(K+1)$ regular polygon, much in the same way as the relationship between the Grassmannian of $2$-planes (\cite{GSV}, Ch. II). The reader versed in the theory of cluster algebras will then recognize in the matrix above the matrix representing the simple quiver of type $A_{2K}$; this means that the variables $y_j^2$ form a {\it seed} for the cluster algebra of type $A_{2K}$. 

To complete the picture we need to show that different choices of triangulations of the regular $2K+2$--gon yield parametrizations of the Stokes' data that are obtained from the initial seed by applying a suitable sequence of {\it mutations }, i.e. simple birational maps from one chart to another.  This is accomplished in Sec. \ref{sec3} and specifically Sec. \ref{secflipping}.
We prove (Thm. \ref{thmmain}) that the Flaschka--Newell Poisson bracket coincides with the described above.

To conclude, in Sec. \ref{secUgaglia} we provide, using the same ideas used in the main text, the explicit log-canonical coordinates for the Poisson bracket introduced in \cite{Ugaglia}. This construction provides an alternative approach to the recent work by Checkov and Shapiro \cite{Chekhov}. 

\section{Symplectic structure on Stokes' matrices}
\label{general}
Consider a polynomial ODE of the form 
\be
\label{ODE}
\Psi'(z) = A(z)\Psi(z),\ \ \ A(z):= \sum_{j=1}^K A_j z^j.
\ee
For the sake of this discussion we can consider the case of $n\times n$ matrices (without real loss of generality, we consider the $sl_n$ case with $\tr(A(z))\equiv 0$). It is a technical but simple exercise to extend all the results to an arbitrary semisimple Lie algebra.
We assume that $A_K$ has simple eigenvalues (i.e. it is regular semisimple). Under this hypothesis one can find  a  solution in the class of formal series  of the form 
\be
\label{Psiform}
\Psi_{form}(z) = \wh Y(z) z^{-L} {\rm e}^{T(z)}, \ \ \ \wh Y(z) := G_0\bigg( \Id + \sum_{j\geq 1} \frac{Y_j}{z^j} \bigg)\in SL_n[[z^{-1}]], 
\ee
where $G_0$ is a chosen diagonalizing matrix for $A_K$ and $L, T(z)$ are diagonal traceless matrices. The entries of $L$ are called the {\it exponents of formal monodromy} and the matrix $T$ is a polynomial of the form 
\be
\label{Birkhof}
T(z) = T_{K+1} \frac {z^{K+1}}{K+1} + \dots + T_1 z,  \ \ T_j \in \CSA,
\ee
where $\CSA$ denotes the Cartan subalgebra of $sl_n$, namely diagonal traceless matrices. The coefficients of $T(z)$ are the (higher formal) {\it Birkhoff invariants}. The matrix $T_{K+1}$ is the diagonal form of the leading coefficient $A_K$, so that 
\be
A_{K}= G_0 T_{K+1} G_0^{-1}. 
\ee

\paragraph{Poisson structure on the space of matrices $A(z)$.}

The Lie Poisson structure on the set of rational matrices can be expressed as (for a  review see  \cite{Babelon})
\be
\label{KKS}
\{A(z) \mathop{,}^\otimes A(w)\} = \bigg[\frac {\Pi}{z-w}, \mathop {A}^1(z) + \mathop {A}^2(w)\bigg]
\ee
where
 $\mathop A^1(z):= A(z)\otimes \Id$, $\mathop A^2(w):= 
\Id \otimes A(w)$
and  $\Pi: \C^n\otimes \C^n\to \C^n\otimes \C^n$ is the tensor effecting the flip:
\be
\Pi (v\otimes f) = f\otimes v, \  \ v, f\in \C^n.
\ee
It can be explicitly written as $\Pi = \sum_{k,j=1}^n \mathbb E_{k,j} \otimes \mathbb E_{j,k}$, with $\mathbb E_{ij}$ the elementary matrices. In our case $A(z)$ is a polynomial; the matrix $A_{K}$ is easily seen to consist entirely of Casimir functions for this Poisson structure. 
The symplectic leaves are thus described; let $G(z)$ be the matrix of eigenvectors for $A(z)$ of the form 
\be
G(z) = G_0\bigg(\Id + \sum_{j\geq 1} \frac {B_j}{z^j}\bigg).
\ee
(The Laurent series has a finite radius of convergence).
Then 
\be
\label{defD}
A(z) = G(z) D(z) G(z)^{-1},\ \ D(z) = T_{K+1} z^K + \dots + T_1 - \frac {L } z + \dots
\ee
where the matrices $T_j$ are all diagonal traceless matrices; as the choice of letters suggests, they coincide (a simple exercise) with the Birkhoff invariants and the exponents of formal monodromy, while the rest of the Laurent tail plays no role in our present considerations.
Then the Casimir  functions are $T_1,\dots, T_{K+1}$ and $A_K = G_0 T_{K+1} G_0^{-1}$. (see \cite{Babelon}, Ch. III). 

On the symplectic leaves, the Poisson structure \eqref{KKS} has the form of the  ``universal symplectic structure'' of Krichever and Phong \cite{Krichever, KricheverPhong}:
\be
\label{kricphong}
\omega_{_{KK}} &=-\res_{z=\infty} \tr \bigg( D(z)  G(z)^{-1} \delta G(z)\wedge  G(z)^{-1} \delta G(z) \bigg)\d z\cr
&= -\res_{z=\infty} \tr \bigg( A(z)  \delta G(z) G(z)^{-1}  \wedge  \delta G(z )  G(z)^{-1}\bigg)\d z
\ee
The two form is invariant under gauge action of  right multiplication of $G$ by diagonal matrices of the form
\be
F(z) = \Id + \sum_{j\geq 1} \frac {F_j}{z^j}\ \ \in\ \  \CSA [[z^{-1}]].
\ee
To see this we introduce the symplectic potential 
\be
\theta:= \res_{z=\infty} \tr  \bigg( D(z)  G(z)^{-1} \delta G(z)\bigg)
\label{potKKS}
\ee
which has the property that $\delta \theta = \omega_{_{KK}}$. 
Now observe that under the gauge transformation $G(z)\mapsto G(z) F(z)$ we have
\be
\theta \mapsto \theta + \res_{z=\infty} \tr  \bigg( D(z) F^{-1}(z)  \delta F(z) \bigg)\d z. \label{gauge}
\ee
In the latter term, since $F(z) = \Id + \mathcal O(z^{-1})$ only the non-negative powers of $D(z)$ contribute (since $F^{-1}(z) \delta F(z) = \mathcal O(z^{-1})$). Given that the parameters $T_1,\dots, T_{K+1}$ in \eqref{defD} are constants, we can express the last term   in \eqref{gauge} as the total derivative of the function 
\be
 \res_{z=\infty} \tr  \bigg( D(z) F^{-1}(z)  \delta F(z) \bigg)\d z =  \delta \res_{z=\infty} \tr  \bigg( D(z)\ln F(z) \bigg)\d z, 
\ee
which implies that $\omega_{_{KK}}  = \delta \theta$ is indeed invariant. 
It is also invariant under left multiplication  $G(z) \mapsto H G(z)$ with $H$ a constant (in $z$): indeed, the left multiplication by a constant matrix $H$ leaves $\theta$ completely invariant:
\be
\theta\mapsto \theta +  \res_{z=\infty} \tr  \bigg(G(z) D(z) G^{-1}(z) H^{-1}\delta H \bigg)\d z =\theta
\ee
where we have used that $G(z) D(z) G^{-1}(z) = A(z)$ is a polynomial. 

The core of the idea of the ``extended coadjoint orbit'' of \cite{BoalchDuke} is the following: while $A_K = G_0 T_{K+1} G_0^{-1}$ is a Casimir for the KKS symplectic structure, $G_0$ itself is not because  right multiplications by a {\it constant} diagonal matrix do not leave the symplectic form invariant. 

Thus we allow $G_0$ to be kinematical variables: fix the Birkhoff invariants $T(z) = \sum_{j=1}^{K+1} T_{j} z^j/j$ (i.e. the diagonal traceless matrices $T_1,\dots, T_{K+1}$) and consider the set
\be
\wh{\mathcal O}_T:= \bigg\{(G_0, A(z))\in SL_n\times \mathcal A_K:\ \  G_0^{-1} A_K G_0 = T_{K+1},\ \ \  (G(z)^{-1} A(z) G(z))_+ = T'(z)\bigg\}\label{extOrbit},
\ee
where $( )_+$ denotes the Taylor part of a Laurent series (here is a polynomial part).

The dimension of $\wh {\mathcal O}_T$ is 
\be
\dim_\C\le(\wh{\mathcal O}_T\ri) = (K+1)(n^2-1) + (n-1) - (K+1)(n-1) = Kn(n-1) + n^2-1
\ee
The extended orbit $\wh{\mathcal O}_T$ carries the following $SL_n$--action:
\be
(G_0, A(z)) \mapsto (H G_0, H A(z) H^{-1}),\ \ \  H\in SL_n. 
\ee
Then the quotient $\wh{\mathcal O_T} / SL_n$ is a symplectic manifold of dimension $Kn(n-1)= \dim_{\C} \mathfrak S_K$. \\[12pt]

In order to connect the Lie--Poisson structure  with the Flaschka--Newell structure on the Stokes' matrices we need first a lemma and to describe the Stokes' phenomenon.

\begin{lem}
\label{lemmaeig}
The first $K+1$  coefficient matrices $Y_1,\dots, Y_{K+1}$ in the expansion of the formal solution $\Psi_{form}$ \eqref {Psiform}
coincide with the expansion of the eigenvector matrix, to wit 
\be
\wh Y(z) := G_0\le(\Id + \sum_{j\geq 1} \frac {Y_j} {z^j}\ri) = G(z) + \mathcal O(z^{-K-2}).
\ee
\end{lem}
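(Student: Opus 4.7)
The plan is to derive the ODE satisfied by $\wh Y(z)$, observe that $\wh Y$ and $G$ satisfy the same formal equation modulo $\mathcal{O}(z^{-2})$ corrections, and then match coefficients order by order down to $z^{-1}$.

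First, substituting $\Psi_{form}(z) = \wh Y(z) z^{-L}{\rm e}^{T(z)}$ into \eqref{ODE} and using that $z^{-L}$ and ${\rm e}^{T(z)}$ commute (being both diagonal), one obtains
\begin{equation*}
A(z)\wh Y(z) - \wh Y(z)\bigl(T'(z) - L/z\bigr) = \wh Y'(z).
\end{equation*}
Since $\wh Y(z) = G_0 + \mathcal{O}(z^{-1})$, the right--hand side is $\mathcal{O}(z^{-2})$. On the other hand, the spectral factorization \eqref{defD} together with $D(z) = T'(z) - L z^{-1} + \mathcal{O}(z^{-2})$ (by the definition of the $T_j$'s and $L$) yields
\begin{equation*}
A(z) G(z) - G(z)\bigl(T'(z) - L/z\bigr) = G(z)\bigl(D(z) - T'(z) + L/z\bigr) = \mathcal{O}(z^{-2}).
\end{equation*}
Thus both $\wh Y$ and $G$ satisfy the same formal equation $A(z) M(z) - M(z)\bigl(T'(z) - L/z\bigr) = \mathcal{O}(z^{-2})$ with the same leading coefficient $G_0$.

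Writing $\wh Y(z) = G_0\bigl(\Id + \sum_{j\geq 1} Y_j z^{-j}\bigr)$ and $G(z) = G_0\bigl(\Id + \sum_{j\geq 1} B_j z^{-j}\bigr)$, I match the coefficients of $z^m$ in this equation for $m = K, K-1, \dots, -1$, which amounts to $K+2$ relations involving only the matrices $Y_j$ (resp. $B_j$) with $1 \leq j \leq K+1$. Each such relation takes the schematic form
\begin{equation*}
G_0\, [T_{K+1}, Y_{K-m}] = \mathcal R_m\bigl(Y_1,\dots, Y_{K-m-1}\bigr),
\end{equation*}
where $\mathcal R_m$ is an explicit matrix expression in the lower--order coefficients and in the data $A_j$, $T_j$, $L$. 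Because $T_{K+1}$ is diagonal with distinct entries, the adjoint operator $X \mapsto [T_{K+1}, X]$ is invertible on off--diagonal matrices, so the recursion uniquely determines the off--diagonal part of each $Y_{K-m}$ in terms of the previously computed ones; the diagonal parts are unconstrained and correspond to the familiar gauge freedom of right--multiplication by a diagonal formal series. Since the $B_j$'s satisfy the identical $K+2$ equations, aligning the diagonal gauges for $j = 1,\dots, K+1$ forces $Y_j = B_j$ in that range, which is precisely $\wh Y(z) - G(z) = \mathcal{O}(z^{-K-2})$.

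The main subtlety is the gauge alignment: both the ODE recursion for $\wh Y$ and the eigenvector condition for $G$ leave the diagonal entries of the subleading coefficients free, so the statement of the lemma implicitly requires compatible normalizations. Once this is fixed, the proof is essentially power counting: the derivative $\wh Y'$ and the tail $\tilde D := D - T' + L/z$ each start at order $z^{-2}$, hence they do not contribute to the equations at orders $z^K, z^{K-1},\dots, z^{-1}$, and the two recursions are indistinguishable through exactly $K+1$ subleading orders.
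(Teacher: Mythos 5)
Your proposal is correct and follows essentially the same route as the paper's (very terse) proof: derive the ODE satisfied by $\wh Y$, observe that $\wh Y'(z)=\mathcal O(z^{-2})$ and that $D(z)-T'(z)+L/z=\mathcal O(z^{-2})$, and conclude by power counting that the two recursions agree through order $z^{-K-1}$. Your explicit coefficient-matching via the invertibility of $\mathrm{ad}_{T_{K+1}}$ on off-diagonal matrices, and your remark that the diagonal parts are only fixed after aligning the gauge freedom $G\mapsto GF$, are useful elaborations of details the paper leaves implicit rather than a different argument.
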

\noindent
{\bf Proof.}
The formal series $\wh Y$ satisfies the ODE
\be
\wh Y'(z) + \wh Y(z)\le(T'(z) - \frac L z\ri) = A(z) \wh Y(z).
\ee
Since $\wh Y'(z) = \mathcal O(z^{-2})$, the matrices $T(z), L$ are diagonal  and since the degree of $A$ is $K$ we deduce that 
$\wh Y$ matches the Laurent expansion of the eigenvector matrix $G(z)$ up to the indicated order.
\QED

\paragraph{Description of the Stokes' phenomenon (extended monodromy map).}
The plane can be partitioned into $2K+2$ sectors of equal angular width $\mathcal S_\mu$, arranged  in counterclockwise order
; within each such sector, there exists a unique analytic solution $\Psi_\mu(z)$ to the ODE \ref{ODE} such that \cite{Wasow}
\be
\Psi_\mu(z) \simeq \Psi_{form}(z), \ \ \ |z|\to\infty, \ \ \arg z\in \mathcal S_\mu,
\ee
with $\Psi_{form}$ given in \eqref{Psiform}.
In these asymptotics, the determination of the matrix of formal exponents $z^L$ is the same, --say-- the principal one.
The matrix $S_\mu:=\Psi_\mu^{-1}(z) \Psi_{\mu+1}(z) $ is a constant (in $z$) matrix and it is called the {\bf Stokes'} matrix; if the entries $t_1,\dots, t_n$ of $T_{K+1}$ are  arranged in  increasing order of $\Re(t_j {\rm e}^{\theta_0})$ (for a generic $\theta_0$ so that this order is unique), then  the Stokes' matrices are all triangular matrices with unit diagonal, namely they belong to $N_\pm\subset SL_n$. Specifically, they alternate the triangularity as we move counterclockwise.

The entries of these matrices are not independent; they must satisfy the monodromy relation 
\be
S_1S_2\cdots S_{2K+2} {\rm e}^{2i\pi L} = \Id\label{nomonodromy}
\ee
which is a  consequence of the fact that the ODE has no singularities in the finite part of the plane and therefore each  of the solutions $\Psi_\mu$ extends uniquely to an entire matrix--valued function.
We thus define the Stokes' manifold as the set of these data:
\begin{defn}
The Stokes' manifold is the following set 
\be
\label{StokesMfld}
\mathfrak S_K:= \bigg\{ (S_1,\dots, S_{2K+2}, L)\in (N_+\times N_-)^{K+1}\times \CSA:\ \ 
S_1\cdots S_{2K+2} {\rm e}^{2i\pi L} = \Id. \bigg\}
\ee
where $N_\pm$ denote the solvable subgroups of upper/lower triangular matrices with ones on the diagonal and $\CSA$ denotes the subalgebra of diagonal traceless matrices.
The dimension of this manifold is 
\be \label{eq: general n dim sigma K}
\dim_\C (\mathfrak S_K )= Kn(n-1).
\ee
\end{defn}
It is apparent that the dimension is even; in fact Boalch \cite{BoalchDuke} shows that these type of manifolds are symplectic. We are going to give a self--contained description, adapted to this case, of this structure. This description deviates, in minor ways, from loc. cit.

\paragraph{The Malgrange form associated to an analytic  family of Riemann Hilbert problems.}
We describe here the gist of \cite{BertolaIsoTau, BertolaCorrection}. 
Suppose that $\Sigma\subset \C$ is a collection of oriented smooth arcs (intersecting transversally) and $J:\Sigma \to SL_n$ a smooth matrix--valued function (the ``jump matrix'') depending analytically on parameters that we denote collectively by ${\bf s}$. The matrix $J(z;{\bf s})$ must satisfy suitable assumptions (see \cite{BertolaCorrection} for details) but the most important for the description here is the ``local monodromy free'' condition: let $v$ be a ``vertex'' of the graph, namely, a point of intersection of the smooth arcs of $\Sigma$. Let $e_1,\dots e_n$ be the sub-arcs of $\Sigma$ entering a small disk $\mathbb D_v$ centered at $v$ and enumerated counterclockwise from an arbitrarily chosen one.  We denote by
\be
J_\ell(v;{\bf s}) = \lim_{z\to v\atop z\in e_\ell} J^{\pm 1}(z;{\bf s}),
\ee
where the power is $+1$ if the edge $e_\ell$ is oriented away from $v$ and $-1$ viceversa.  Then the matrices must satisfy  
\be
\label{nomono} 
J_1(v;{\bf s})\cdots J_n(v;{\bf s})
 = \Id
\ee
for all the vertices $v$ of $\Sigma$, identically with respect to the deformation parameters ${\bf s}$.
Suppose now that there exists (generically with respect to ${\bf s}$) the solution of the Riemann--Hilbert problem\footnote{To simplify the mental picture, the reader may assume here  that $\Sigma$ is compact: if some rays extend to infinity, the assumption is that $J(z)$ tends to the identity matrix faster than any power of $z^{-1}$ as $z\to \infty$, $z\in \Sigma$, so that the RHP can be posed consistently. Details are in \cite{BertolaCorrection}.}
\be
\Gamma_+(z;{\bf s}) = \Gamma_-(z;{\bf s}) J(z;{\bf s}) ,\ \ \ z\in \Sigma, \ \ \ \ \Gamma(\infty;{\bf s}) \equiv C_0.
\ee
The normalization condition at $z=\infty$ is usually taken to be the identity, but it will be convenient to consider a more general one.
Then we have the definition 
\begin{defn}
\label{defMalg}
The Malgrange form is defined by the formula 
\be
\Theta_{M}:= \int_\Sigma \tr \bigg( \Gamma_-^{-1}(z;{\bf s}) \Gamma_-'(z;{\bf s})\Xi(z;{\bf s}  )\bigg)\frac {\d z}{2i\pi}
\ee
where $\Xi(z;{\bf s}):=  \delta J(z;{\bf s})  J^{-1}(z;{\bf s})$ is the Maurer--Cartan form,  the prime denotes $\d/\d z$ and $\delta$ is the total differential in the deformation parameters ${\bf s}$. 
\end{defn}
We observe that the Malgrange form $\Theta_M$ is independent of the normalization at $z=\infty$, which corresponds to a left multiplication of $\Gamma$ by a $z$--independent matrix. 
Then one has 
\begin{thm}[Thm. 2.1 in \cite{BertolaCorrection}]
\label{thmshame}
The exterior derivative of the Malgrange form $\Theta_M$ is 
\be
\label{deltaTheta}
\delta \Theta_M =  -\frac 1 2 \int_{\Sigma}\frac{\d z}{2i\pi} \tr \le( \Xi'(z) \wedge \Xi(z) \ri)
- \frac 1{4i\pi} \sum_{v\in \mathbf V(\Sigma)}\sum_{\ell=1}^{n_v} \tr\bigg(K_{\ell}^{-1}(v) \delta K_{\ell}(v) \wedge J_\ell^{-1}(v)\delta J_\ell(v) \bigg)
\ee
where $K_\ell(v)= J_1(v)\cdots J_\ell(v)$ and the matrices $J_\ell(v)$ are defined prior to \eqref{nomono}.\footnote{In loc. cit. the form is presented in a different, but equivalent, way.}
\end{thm}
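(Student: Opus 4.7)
The plan is to differentiate $\Theta_M$ under the integral, expand via the appropriate Maurer--Cartan identities, and integrate by parts along $\Sigma$. Introduce the auxiliary matrix-valued quantities $M_-(z):=\Gamma_-^{-1}\delta\Gamma_-$ and $B_-(z):=\Gamma_-^{-1}\Gamma_-'$. Since $\delta$ commutes with $\partial_z$ and $\delta(\Gamma_-^{-1}) = -M_-\Gamma_-^{-1}$, a direct calculation yields $\delta B_- = M_-' + [B_-, M_-]$, while $\delta\Xi = \Xi\wedge\Xi$ follows from $\delta^2 J=0$ together with $\delta(J^{-1})=-J^{-1}\delta J\,J^{-1}$. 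Applying $\delta$ under the integral in Definition \ref{defMalg} then produces the three-term integrand
\[
2\pi i\,\delta\Theta_M = \int_\Sigma\tr(M_-'\wedge\Xi)\,dz + \int_\Sigma\tr([B_-,M_-]\wedge\Xi)\,dz + \int_\Sigma\tr(B_-\,\Xi\wedge\Xi)\,dz.
\]

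To eliminate the solution $\Gamma_-$ in favour of intrinsic quantities, differentiate the jump relation $\Gamma_+=\Gamma_-J$ to obtain the exchange formula $JM_+J^{-1}-M_-=\Xi$, equivalently $\delta\Gamma_+\Gamma_+^{-1}-\delta\Gamma_-\Gamma_-^{-1}=\Gamma_-\Xi\Gamma_-^{-1}$. Together with the graded cyclic property of $\tr$, this lets one rewrite the last two integrands, modulo a total $z$-derivative, in terms of $J$ and $\Xi$ alone. Integrating the $\tr(M_-'\wedge\Xi)$ piece by parts along each smooth arc of $\Sigma$ transfers the $\partial_z$ onto $\Xi$; the three bulk contributions then combine to yield $-\frac12\int_\Sigma\frac{dz}{2\pi i}\tr(\Xi'\wedge\Xi)$, matching the first term of \eqref{deltaTheta}. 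The only remaining boundary values sit at the vertices of $\Sigma$ (the contribution at $z=\infty$ is absent thanks to the decay of $J-\mathbf 1$ built into the setup).

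At a vertex $v$, label the incident arcs $e_1,\dots,e_{n_v}$ counterclockwise, with associated boundary jumps $J_\ell(v)$. Choose one sector at $v$ as reference; iterating the exchange formula around $v$, the boundary value of $\Gamma$ in the sector between $e_\ell$ and $e_{\ell+1}$ differs from the reference value by right multiplication by the cumulative product $K_\ell(v)=J_1(v)\cdots J_\ell(v)$. Telescoping the boundary contributions from all incident arcs, the reference drops out thanks to the no-monodromy condition $K_{n_v}(v)=\mathbf 1$ of \eqref{nomono}, and the remainder collapses to $-\frac{1}{4i\pi}\sum_\ell\tr\bigl(K_\ell^{-1}\delta K_\ell\wedge J_\ell^{-1}\delta J_\ell\bigr)$. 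The main obstacle is precisely this vertex bookkeeping: tracking orientations of the arcs (which control the $\pm 1$ exponents in $J_\ell(v)$), the signs from wedge products of matrix 1-forms, and checking that all $\Gamma$-dependence cancels via $K_{n_v}(v)=\mathbf 1$ to produce the symmetric Maurer--Cartan pairing with the correct prefactor.
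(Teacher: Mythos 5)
First, a point of reference: the paper does not prove Theorem \ref{thmshame} at all --- it is imported verbatim from \cite{BertolaCorrection} --- so the comparison is with that reference's argument. Your preliminary computations are correct and do match its strategy: $\delta B_-=M_-'+[B_-,M_-]$, $\delta\Xi=\Xi\wedge\Xi$, the resulting three--term integrand, the exchange relation $\delta\Gamma_+\Gamma_+^{-1}-\delta\Gamma_-\Gamma_-^{-1}=\Gamma_-\Xi\Gamma_-^{-1}$, and the telescoping of the vertex contributions using $K_{n_v}(v)=\Id$ (equivalently $\delta K_{n_v}(v)=0$, which is what kills the reference boundary value $\Gamma_0^{-1}\delta\Gamma_0$ in the sum $\sum_\ell K_{\ell-1}\delta J_\ell J_\ell^{-1}K_{\ell-1}^{-1}$) are all the right ingredients.

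The genuine gap is at the central step, where you assert that the exchange formula ``together with the graded cyclic property of $\tr$'' rewrites the $\Gamma_-$--dependent bulk terms in terms of $J$ and $\Xi$ alone modulo a total $z$--derivative. That elimination is not a local algebraic fact on each arc. Setting $\mathcal U:=\delta\Gamma\,\Gamma^{-1}$ (so $M_-=\Gamma_-^{-1}\mathcal U_-\Gamma_-$) and $\mathcal F:=\Gamma_-\Xi\Gamma_-^{-1}=\mathcal U_+-\mathcal U_-$, a short computation with the graded cyclicity you invoke turns your integrand into
\be
\nn
\tr\le(M_-'\wedge\Xi\ri)+\tr\le([B_-,M_-]\wedge\Xi\ri)+\tr\le(B_-\,\Xi\wedge\Xi\ri)
=-\tfrac12\tr\le(\Xi'\wedge\Xi\ri)+\tfrac12\,\pa_z\tr\le(M_-\wedge\Xi\ri)
+\tfrac12\le[\tr\le(\mathcal U_+'\wedge\mathcal U_+\ri)-\tr\le(\mathcal U_-'\wedge\mathcal U_-\ri)\ri],
\ee
up to orientation conventions. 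The first two terms give exactly the bulk term of \eqref{deltaTheta} and the endpoint (vertex) contributions, but the last bracket is neither pointwise zero nor an exact $z$--derivative along the arc: it is a nontrivial difference of boundary values, still depending on $\Gamma$. Its integral over $\Sigma$ vanishes only after summing over all arcs, because $\tr(\mathcal U'\wedge\mathcal U)\,\d z$ is holomorphic in $z$ in each face of $\C\setminus\Sigma$ and one applies Cauchy's theorem face by face (using the decay of $\mathcal U'$ at infinity for the unbounded faces, and the absence of monodromy of $\Gamma$ around the vertices). This global analytic input --- the existence and analyticity of the solution $\Gamma$ of the Riemann--Hilbert problem off $\Sigma$, not merely the jump relation across each arc --- is the actual engine of the theorem and is absent from your outline; without it the derivation stalls with $M_-$ and $B_-$ still present in the bulk. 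Once it is supplied, your vertex bookkeeping does go through as you describe.
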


\begin{figure}
\begin{center}
\begin{tikzpicture}

\draw [directed, fill=black!5!white] (-1.5,0) circle[radius =0.5];
\node at (-1.5,0.2) {$\mathbb D_\beta$};
\draw[fill] (-1.5,0) circle [radius=0.02 ];
\node [below] at (-1.5,0) {$0$};
\node at (-1.9,-0.6) {$ z^{-L}$};
\draw [directed] (0:1) to node[above, pos=0.7] {${\rm e}^{2i\pi L}$} (-1,0);
\draw[fill] (-1,0) circle [radius=0.025];
\node [below right] at (-1,0) {$\beta$};
\foreach \angle\pos\label [count=\n] in {-22.5/above/{$\varpi_1$}, 22.5/above/{$\varpi_2$}
, 67.5/below/{$\varpi_3$}  
, 112.5/below/{$ $}  
, 157.5/below/{$ $}  
, -157.5/below/{$ $}  
, -112.5/below/{$ $}  
, -67.5/below/{$\varpi_{_{2K+2}} $}  
}
{
\draw [directed] (1,0) to node[\pos, sloped, pos=0.8] {\label} ++($(\angle:3.5 + 0.017*abs \angle )$);
}
\node [right] at (1.2,0.06) {$z=1$};
\node at ($(1,0)+(0:3.5)$) {$\mathcal S_1$};
\node at ($(1,0)+(45:3.5)$) {$\mathcal S_2$};
\node at ($(1,0)+(-45:3.5)$) {$\mathcal S_{_{2K+2}}$};
\draw [->,dotted]  (-1.5,-5) to (-1.5,5);
\draw [->,dotted]  (-5,0) to (5,0);
\end{tikzpicture}
\end{center}
\caption{An example of Stokes' graph $\Sigma$ used in Theorem \ref{thmMalgrange}.}
\label{figSigma}

\end{figure}
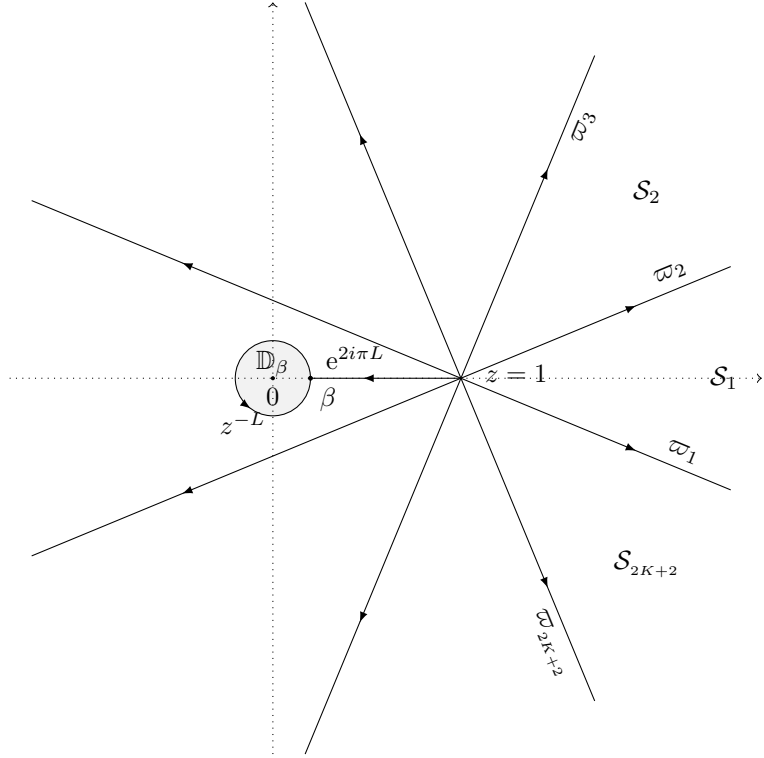

We now come to the main statement of the section.
\begin{thm}
\label{thmMalgrange}
The following  two--form is a (complex) symplectic structure on $\mathfrak S_K$:
\be
\label{OmegaStokes}
{\mathcal W_K} := \frac 1 2 \sum_{\ell=1}^{2K+3} \tr\bigg(
 K_\ell^{-1} \d K_\ell
 \wedge 
 S_\ell^{-1} \d S_\ell  \bigg),\ \ \ K_\ell:= S_1\cdots S_\ell, \ \ S_{2K+3}:= {\rm e}^{2i\pi L}.
\ee
Its pull-back by the (extended) monodromy map coincides with the Lie--Poisson structure \eqref{kricphong} times $-2i\pi$. 
\end{thm}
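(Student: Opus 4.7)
The plan is to realize $\mathcal{W}_K$, up to the factor $-2i\pi$, as the exterior derivative of the Malgrange one-form (Def.~\ref{defMalg}) attached to a Riemann--Hilbert problem encoding the Stokes phenomenon, and then to identify this Malgrange form with the symplectic potential $\theta$ of \eqref{potKKS}. Theorem~\ref{thmshame} will simultaneously deliver the closedness of $\mathcal{W}_K$ and the pull-back identity.

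\textbf{Step 1 (Riemann--Hilbert setup).} I take the oriented graph $\Sigma$ of Figure~\ref{figSigma}: the $2K+2$ Stokes rays $\varpi_\mu$ outgoing from the vertex $z=1$, the segment $[\beta,1]$ oriented towards $1$, and the small circle $\partial\mathbb{D}_\beta$. Reassembling the sectorial canonical solutions $\Psi_\mu$ into a single piecewise analytic matrix $\Gamma(z)$, normalized at infinity by stripping off the formal factor $G_0 z^{-L}e^{T(z)}$, yields a RHP with jumps $S_\mu$ on $\varpi_\mu$, $e^{2i\pi L}$ on $[\beta,1]$, and $z^{-L}$ on $\partial\mathbb{D}_\beta$. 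The local no-monodromy condition \eqref{nomono} at the vertex $v=1$ coincides exactly with the defining relation $S_1\cdots S_{2K+2}e^{2i\pi L}=\Id$ of $\mathfrak{S}_K$.

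\textbf{Step 2 (vanishing of the bulk integral).} In the right--hand side of \eqref{deltaTheta}, the Stokes-ray and segment contributions vanish because the jump is $z$-constant there and $\Xi'\equiv 0$. On $\partial\mathbb{D}_\beta$ the jump $z^{-L}$ gives $\Xi=-\delta L\,\log z$, so $\tr(\Xi'\wedge\Xi)$ is proportional to $\tr(\delta L\wedge\delta L)=0$ since $L\in\CSA$ is diagonal. Thus the whole bulk piece of $\delta\Theta_M$ disappears.

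\textbf{Step 3 (vertex contribution $=\mathcal{W}_K$).} Only the vertex $v=1$ contributes non-trivially, the $\beta$ vertex producing once again a $\tr(\delta L\wedge\delta L)$-type term which vanishes. Ordering the edges at $v=1$ counterclockwise so that the partial products $K_\ell=J_1\cdots J_\ell$ read $S_1\cdots S_\ell$, and noticing that the final slot with $S_{2K+3}=e^{2i\pi L}$ is annihilated on $\mathfrak{S}_K$ by $K_{2K+3}=\Id$, I recognize the vertex sum in \eqref{deltaTheta} as exactly $-(2i\pi)^{-1}\mathcal{W}_K$. Consequently
\[
\delta\Theta_M=-\frac{1}{2i\pi}\,\mathcal{W}_K,
\]
which already shows closedness of $\mathcal{W}_K$ on $\mathfrak{S}_K$.

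\textbf{Step 4 (Malgrange $=$ KKS potential).} For the pull-back statement I match $\Theta_M$ with $\theta$. Using analyticity of $\Gamma$ off $\Sigma$ and an integration by parts, the contour integral in Def.~\ref{defMalg} is pushed to a large circle at infinity and only the polar part of $\Gamma^{-1}\Gamma'$ contributes to the residue. Substituting $\Gamma(z)\sim G_0(\Id+\sum_{j\geq 1}Y_jz^{-j})z^{-L}e^{T(z)}$, the essentially singular factor $e^{T(z)}$ cancels between $\Gamma$ and the jumps carried by $\Xi$; by Lemma~\ref{lemmaeig} the prefactor $G_0(\Id+\sum Y_jz^{-j})$ agrees with the eigenvector matrix $G(z)$ up to an order sufficient to reproduce the entire residue, so the surviving polar piece of $\Gamma^{-1}\Gamma'$ is precisely $D(z)$ of \eqref{defD}. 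One obtains
\[
\Theta_M=\res_{z=\infty}\tr\bigl(D(z)\,G(z)^{-1}\delta G(z)\bigr)\,\d z=\theta,
\]
which, combined with Step~3, yields $\mathcal{W}_K=-2i\pi\,\delta\theta=-2i\pi\,\omega_{_{KK}}$, exactly the pull-back statement. Non-degeneracy of $\mathcal{W}_K$ on $\mathfrak{S}_K$ then follows from non-degeneracy of $\omega_{_{KK}}$ on the symplectic leaves, and is verified concretely in the later sections by the explicit log-canonical coordinates.

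\textbf{Main obstacle.} The delicate step is Step~4: the Malgrange integrand carries essentially singular exponential factors at infinity, so the contour deformation and residue extraction must be justified carefully, and one must show that only the polynomial Laurent tail $D(z)$ survives. The sharp truncation order in Lemma~\ref{lemmaeig} is exactly what makes the identification $\Theta_M=\theta$ work without extra corrections.
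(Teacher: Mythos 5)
Your proposal follows essentially the same route as the paper: build the sectorial solutions into a piecewise RHP on the graph of Fig.~\ref{figSigma}, identify the KKS potential $\theta$ with the Malgrange form, and read off $\delta\Theta_M$ from Theorem~\ref{thmshame}, with only the vertex $v=1$ surviving and producing $-(2i\pi)^{-1}\mathcal W_K$. The architecture and the conclusion are correct.

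Two of your justifications, however, are not. In Step~2 you claim the Stokes-ray contributions to the bulk integral vanish because ``the jump is $z$-constant there'': it is not. The jumps on the rays are $J_\ell(z)={\rm e}^{T(z)-T(1)}z^{-L}S_\ell z^{L}{\rm e}^{-T(z)+T(1)}$, which depend on $z$ through the conjugating exponential and power factors; the actual reason the rays do not contribute is that $\tr\le(\Xi'\wedge\Xi\ri)$ is traceless there, because $\delta J_\ell J_\ell^{-1}$ is a conjugate of a strictly triangular matrix (plus diagonal pieces whose wedge with themselves vanish). In Step~4 you assert $\Theta_M=\theta$ on the nose. That is false: when one converts the residue for $\theta$ into the contour integral over $\Sigma$ and substitutes $A\Gamma=\Gamma'+\Gamma\le(T'(z)-\tfrac{L}{z}\ri)$, the second term does not cancel but produces, after an explicit computation on the segment and the circle $\pa\mathbb D_\beta$, the correction $\theta=\Theta_M-\delta\tr\le(T(1)L+\tfrac{i\pi}{2}L^2\ri)$. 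Your ``cancellation of the essentially singular factors'' hides exactly this bookkeeping. Fortunately the discrepancy is an exact one-form, so $\delta\theta=\delta\Theta_M$ still holds and your final identity $\mathcal W_K=-2i\pi\,\omega_{_{KK}}$ survives; but as written, Step~4 proves a false intermediate statement, and you should either carry out the computation of the extra integral or at least state that $\theta$ and $\Theta_M$ agree only modulo an exact form.
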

Before discussing the proof, we point out that this form is written in a different way from \cite{BoalchDuke} (Thm 5, formula (7)) and rather reflects the general theory of ``canonical form associated to a graph'' developed in \cite{Bertola:2019ws}. The two expressions (a posteriori) can be verified to give the same two--form when restricted to the constraint \eqref{nomonodromy}. 
In principle, in our explicit computation later on for the $SL_2$ case, this theorem is verified {\it ex post facto}. 

\noindent {\bf Proof.}
We show that the symplectic form $\mathcal W$  \eqref{kricphong} coincides with the  pull-back by the monodromy map of the  form \eqref{OmegaStokes} and hence showing that the latter is also symplectic (or, to put it more plainly, we write \eqref{kricphong}  in the coordinates provided by the Stokes' matrices).
The proof here is completely different from \cite{BoalchDuke}; rather than computing the two--form $\mathcal W$ in the coordinates of the Stokes' matrices, we directly compute the symplectic potential \eqref{potKKS}.

 Let $\Sigma$ be graph indicated in Fig. \ref{figSigma}: the vertex of the star is at $z=1$ and the small circle is centered at the origin $z=0$.  The Stokes' rays are the lines $\varpi_1,\dots \varpi_{2K+2}$ issuing from $z=1$ and extending to infinity along the Stokes' directions. In the Fig. \ref{figSigma} we have drawn them for the case $K=3$ under the assumption that the real parts $\Re (i t_j)$ are ordered increasingly, so that the Stokes' rays $\varpi_\ell$ have asymptotic directions $\arg z = \frac {i\pi}{2(K+1)} + \frac {i\pi}{K+1} (\ell-1)$ and the Stokes' matrix $S_1$ is then upper triangular.

We now define a piecewise analytic function  $\Gamma$ in each of the connected components of $\C \setminus \Sigma$; in the sector $\mathcal S_1$  $\Gamma$ is given by
\be
\Gamma(z)= \Gamma_1(z) := \Psi_1(z) {\rm e}^{-T(z) +T(1)}  z^{L},
\ee
where the determination of $z^{L}$ is the principal one.
 In the other unbounded components (including the one that contains the disk $\mathbb D_r$) the matrix $\Gamma$ is defined by multiplying $\Gamma_1(z)$ by the jump matrices 
\be
\label{Jell}
J_\ell(z) := {\rm e}^{T(z)-T(1)} z^{-L} S_\ell z^{L} {\rm e}^{-T(z)+T(1)}, \ \ z\in \varpi_\ell.
\ee
The triangularity of $S_\ell$ is such that $J_\ell(z) =\Id +  \mathcal O(z^{-\infty})$ as $|z|\to\infty, \ z\in \varpi_\ell$. 
Within the disk $\mathbb D_r$ we define 
\be
\Gamma(z) = \Gamma_0(z) := \Gamma_{j_0}(z) z^{-L}   = \Psi_{j_0}(z) {\rm e}^{T(1)-T(z)}, 
\ee
where $j_0$ is the index of the sector containing $\mathbb D_\beta$. Note that $\Gamma_0$ is locally analytic near $z=0$.

In the sector containing the disk $\mathbb D_\beta$ the matrix $\Gamma$ does not have a jump on the ray $(-\infty, -\beta]$ because of the monodromy relation \eqref{nomonodromy} and combined with the monodromy of the factor $z^L$.
There is, however the jump $\Lambda = {\rm e}^{2i\pi L}$ on the segment $[\beta,1]$. 
A straightforward exercise shows that the piecewise analytic matrix function $\Gamma$ satisfies a RHP on  the graph $\Sigma$ shown in Fig. \ref{figSigma}:
\be
\Gamma_+(z)& = \Gamma_-(z) J (z),\ \ \ z\in\ \Sigma 
,\qquad \qquad 
\Gamma(z)\simeq {\rm e}^{T(1)}\wh Y(z),\ \ |z|\to\infty,
\label{RHP1}
\ee
where $\simeq$ denotes the asymptotic equivalence in the Poincar\'e\ sense,  $\wh Y(z)$ is the formal series as in Lemma \ref{lemmaeig} and  the jump matrix $J(z)$ is given by
\be
\label{jump}
J(z) = \le\{
\begin{array}{cl}
J_\ell(z) & z\in \varpi_\ell \  \ \text { (see \eqref{Jell})}\\
z^{-L}  & z\in \partial \mathbb D_\beta.
\end{array}
\ri. .
\ee
The jump matrix on $\pa \mathbb D_\beta$ is the function $z^{-L}$ and the determination is (recall that $\beta\in \mathbb R_+$) with $\arg z\in [0,2\pi)$, which is not the same used earlier but we do not want to overload the notation  by using a different symbol for the power.

Using Lemma \ref{lemmaeig} we can write the symplectic potential \eqref{potKKS} as the formal residue 
\be
\theta =& \res_{z=\infty}\ \tr\bigg(A(z) \delta G(z) G^{-1}(z)\bigg)\d z= ``{\res_{z=\infty}}"\ \tr\bigg(A(z) \delta \wh Y(z) \wh Y^{-1}(z)\bigg)\d z
=\cr
=&\res_{z=\infty}\ \tr\bigg(  A(z)\delta \Gamma(z) \Gamma^{-1}(z)   - \Gamma^{-1}(z) A(z)\Gamma(z) \delta T(1)  \bigg)\d z .
\label{2345}
\ee
Since the expansion at $\infty$ of $\Gamma$ coincides with that of the eigenvectors up to order $z^{-K-1}$ (included),  the second term in the residue yields  (recall that $\res_{z=\infty}$ extracts the coefficient of $z^{-1}$ with a {\it minus} sign)
\be
-\res_{z=\infty}\ \tr\bigg( \le(T'-\frac L z\ri) \delta T(1)  \bigg)\d z  =- \tr (L \delta T(1)).
\ee
The first term in \eqref{2345} is a formal residue and can be realized as the following limit of an actual integral
\be
 \lim_{r\to\infty} \oint_{|z|=r} \frac {\d z}{2i\pi}\tr\bigg(A(z) \delta \Gamma \Gamma^{-1}\bigg)
\ee
where the contour runs counterclockwise.  Note that the integrand is actually an analytic function defined piecewisely for each sector.
Applying Cauchy's theorem, we can reduce the integration along the support of the jumps of $\Gamma$ and we obtain
\be
\label{27}
\theta = \int_\Sigma \frac{\d z}{2i\pi} \tr \bigg(A(z) \Delta_{\Sigma} (\delta\Gamma \Gamma^{-1})
\bigg) - \tr \big(L\delta T(1)\big)
\ee
where $\Delta_\Sigma$ is the jump operator $\Delta_\Sigma F(z) = F_+(z)-F_-(z)$,  $z\in \Sigma$.
Now observe that 
\be
\Gamma_+ = \Gamma_- J \ \ \ \  \Rightarrow \  \ \ \ \delta \Gamma_+ = \delta\Gamma_- J + \Gamma_- \delta J
\ \ \ \  \Rightarrow\ \ \ \ \delta \Gamma_+ \Gamma_+^{-1} = \delta \Gamma_- \Gamma_-^{-1} + \Gamma_- \delta J J^{-1}\Gamma_-^{-1}.
\ee
and hence we have 
\be
\label{nice}
\Delta_\Sigma (\delta \Gamma \Gamma^{-1}) = \Gamma_- \delta J J^{-1}\Gamma_-^{-1}.
\ee
Plugging \eqref{nice} into \eqref{27} gives
\be
\label{30}
\theta =   \int_\Sigma \frac{\d z}{2i\pi} \tr \bigg(\Gamma_-^{-1} A  \Gamma_- \delta J J^{-1}\bigg) 
- \tr \big(L\delta T(1)\big).
\ee
The above expression suggest a relationship with the Malgrange form $\Theta_M$  in Def. \ref{defMalg} which we now investigate.
Using the definition $\Gamma(z) = \Psi(z) {\rm e}^{T(1)-T(z)} z^{L}$ (piecewise sectorially), we find that 
\be
A(z) \Gamma(z) = \Psi'(z) {\rm e}^{T(1)-T(z)} z^{L}= \Gamma'(z) + \Gamma \le(T'(z)- \frac {L} z\ri).
\ee
Thus the expression \eqref{30} is recast into:
\be
\label{34}
\theta = \int_\Sigma \frac{\d z}{2i\pi} \tr \bigg(\Gamma_-^{-1} \Gamma_-' \delta J J^{-1}\bigg)
+
\int_\Sigma \frac{\d z}{2i\pi} \tr \bigg(\le(T'(z)- \frac {L} z \ri) \delta J J^{-1}\bigg) - \tr \big(L\delta T(1)\big)
\ee
The integrand in the second integral is zero on each of the Stokes' rays $\varpi_\ell$ because the matrices $\delta J_\ell J_\ell^{-1}$ are strictly triangular (upper or lower), with zeros on the diagonal and $L, T'$ are diagonal, so that the product is diagonal-free. Thus the second integral reduces to 
\be
\int_\Sigma &\frac{\d z}{2i\pi} \tr \bigg(\le(T'(z) -  \frac {L} z \ri) \delta J J^{-1}\bigg)
=\cr
=&\int_\beta^{\beta{\rm e}^{2i\pi}}  \frac{\d z}{2i\pi} \tr \bigg(\le(T'(z) - \frac {L} z \ri)\Big(
-\delta T(1)-\delta L \ln z\Big) \bigg) 
+ \int_1^\beta \bigg(\le(T'(z)- \frac {L} z \ri) \delta L\bigg)\d z
=\nn \\
=& - \sum_{j=1}^{K+1}\frac{\tr (T_j\delta L)}{2i\pi}\le(\frac{\ln z}{j} - \frac {1}{j^2}\ri) z^{j} \bigg|_{\beta}^{\beta {\rm e}^{2i\pi}}
 +  \frac{\delta \tr (L^2)}{4i\pi} \frac{(\ln z)^2}2 \bigg|_{\beta}^{\beta{\rm e}^{2i\pi}} 
 + \tr \Big(L \delta T(1)\Big)\cr
& +\tr \bigg(\big(T(\beta)-T(1)\big)\delta L - \delta \le(\frac {L^2}2\ri) \ln \beta\bigg)
 =\nn\\
 =&- \tr \big(T(1) \delta L\big)  -\frac{i\pi}2  {\delta \tr \big(L^2\big)}.
\ee
Thus we have shown that 
\be
\label{thetaTheta}
\theta =   \Theta_M  -  \tr \big(T(1) \delta L\big)  -2i\pi  {\delta \tr \big(L^2\big)}  -\tr \big(L\delta T(1)\big)=
\Theta_M  -  \delta \tr \le(T(1)  L+\frac {i\pi}2L^2\ri) .  
\ee
This means that the Kirillov-Kostant form $\theta$ coincides with the Malgrange form up to an exact differential.
We now compute the exterior derivative of $\theta$ using Theorem \ref{thmshame}. It is clear that the last term in \eqref{thetaTheta} does not contribute to the exterior differentiation because it is an exact form.  The integral in \eqref{deltaTheta}  has no contribution because 
\begin{enumerate}
\item [-] on the rays $\varpi_\ell$ the integrand is traceless (given the triangularity of the jump matrices \eqref{Jell});
\item[-] on the segment issuing from $z=1$ and directed to the disk, the matrix $\Xi$ is constant in $z$;
\item[-] on the boundary of the disk $\Xi'(z) \wedge \Xi(z) =  \frac {\ln z} z \delta L \wedge \delta L =0$ since $L$ is diagonal.
\end{enumerate}
Thus we are left only with the contributions from the two vertices of the graph in Fig. \ref{figSigma}, which are $v_0 = \beta$ and $v  =1$.
At $v_0$ we have three incident edges and the matrices $J_1,J_2,J_3$ are $J_1 = {\rm e}^{2\pi L}$, $J_2= \beta^{L}$, $J_3 = \beta^{-L} {\rm e}^{-2i\pi L}$. Since they commute, it is easy to see that there is no contribution (each term contains $\delta L \wedge \delta L$, which vanishes identically  since $L$ is diagonal).

Thus the only contribution comes from $v=1$; here the jumps are:
\be
J_\ell(v) =  S_\ell ,\ \ \ \ell=1,\dots, 2K+2
\ee 
and $J_{2K+3} = {\rm e}^{-2i\pi L}$.
Then the Theorem \ref{thmshame} gives precisely \eqref{OmegaStokes} divided by $-2i\pi$. Thus we conclude that ${\mathcal W_K}$ in \eqref{OmegaStokes} is a symplectic form. 
\QED
\begin{rmk}
To be explicit, the coordinates on the quotient of the extended orbit \eqref{extOrbit} are as follows; one writes 
\be
G = G_0\exp\le(
 \frac {H_1}{z} + \frac {H_2} {z^2} + \dots + \frac {H_K} {z^{K}} + \mathcal O(z^{-K-1})
\ri)
\ee
where $H_1,\dots, H_K$ can be chosen {\it diagonal free} (i.e. with zeros on the diagonal), using the gauge freedom \eqref{gauge}. Then the $Kn(n-1)$ entries of $H_1,\dots, H_K$ are the coordinates.
\end{rmk}

\paragraph{The star-graph for the Stokes' phenomenon.}
{
Given the formula \eqref{OmegaStokes} we surmise that the form $\mathcal W_K$ can be represented as $\mathcal W_K = 2 \Omega(\Sigma^\star)$  where $\Sigma^\star$ (the ``star-graph'') is simply the collection of $2K+3$ rays, each carrying the matrices $J_1:= S_1, \dots, J_{2K+2}:= S_{2K+2}, J_{2K+3}:=  \Lambda= {\rm e}^{2i\pi L}$ as jumps. We can actually merge the last two rays and corresponding jump matrices to obtain a simpler star-graph $\Sigma^{(K)}$ indicated by the way of example in Fig. \ref{fig:stokes graph formal mon} for $K=2$. This is not quite one of the generally allowed moves listed in \cite{Bertola:2019ws} but we now verify directly  that it leaves the form invariant. Let thus $\wt J_\ell = J_\ell, \ \ \ell= 1,\dots, 2K+1$ and $\wt J_{2K+2}:= J_{2K+2}J_{2K+3} = S_{2K+2} \Lambda$. Recall that $S_{2K+2}\in N_-$ and $\Lambda$ is diagonal. 
Note that $K_{\ell} = \wt K_{\ell}$ up to $\ell= 2K+1$, while $\wt K_{2K+2} = K_{2K+2} \Lambda=\Id$. 
Then the difference between the two forms is 
\be
\Omega(\Sigma^\star) - \Omega(\Sigma^{(K)}) = \tr \big(K_{2K+2}^{-1} \d K_{2K+2}\wedge S_{2K+2}^{-1} \d S_{2K+2}\big).\label{248}
\ee
Since $K_{2K+3} = K_{2K+2}\Lambda = \Id$ we must have that $K_{2K+2} = \Lambda^{-1}$, namely, it is diagonal. 
But $S_{2K+2}$ is unipotent triangular and hence $S_{2K+2}^{-1} \d S_{2K+2}$ is strictly lower triangular, so that the matrix in \eqref{248} is diagonal--free and the trace gives zero.
Thus, in conclusion, we only  need to analyze the two--form associated to the graphs of the form $\Sigma^{(K)}$ depicted in Fig. \ref{fig:stokes graph formal mon}. We do so for the rank-two case ($n=2$) in the next section. 
}

\section{Stokes manifolds for $n=2$}
\label{sec3}
Our goal now is twofold:
\begin{enumerate}
\item provide explicit parametrization in terms of patches of free coordinates for the  complex manifold $\mathfrak S_K$ \eqref{StokesMfld};
\item show that the coordinates introduced above are log--canonical for two--form \eqref{OmegaStokes}.
\end{enumerate}
We recall here the terminology; a coordinate system $(x_1,\dots, x_{2n})$ on a symplectic manifold $(\mathcal M,\omega)$ is called {\it log-canonical} if the symplectic form   is expressed as follows in the coordinate system
\be
\omega ({\bf x}) = \sum_{i<j} \omega_{ij} \frac {\d x_i}{x_i} \wedge \frac  {\d x_j}{x_j}
\ee
with $\omega_{ij}$ constants. If $P_{ij}$ denotes the inverse transposed of the matrix $\omega_{ij}$ then the Poisson brackets read 
\be
\{x_i,x_j\} = P_{ij} x_ix_j  \ \ \ \text {(no summation)},
\ee
namely the logarithms of the coordinates have constant Poisson brackets amongst themselves (whence the terminology).
At this point the problem of finding Darboux coordinates reduces to a simple problem of linear transformation in the logarithmic coordinates to find the canonical symplectic matrix for the Poisson brackets.

We are going to carry out the two steps above in the case of $SL_2$, which corresponds to the historically first case ever studied in \cite{FlaschkaNewell}. The higher case can be handled in a similar way but we defer the computation to a later paper since it would  unnecessarily obfuscate the computation behind a plethora of indices.

The Stokes' manifold \eqref{StokesMfld} specializes  for any $K \geq 1$ to the following
\begin{equation}\label{eq:stokesmanifold}
\mathfrak{S}_{K}=\left\lbrace 
\begin{pmatrix}
1&s_{1}\\
0&1
\end{pmatrix} 
\begin{pmatrix}
1&0\\
s_{2}&1
\end{pmatrix} \dots
\begin{pmatrix}
1&s_{2K+1}\\
0&1
\end{pmatrix}
\begin{pmatrix}
1&0\\
s_{2K+2}&1
\end{pmatrix} \l^{\s_3} = I_{2} \; \text{with} \; s_i \in \mathbb{C} ,\ \ \l\in \C^\times
\right\rbrace.
\end{equation}
We will denote by $  S_{2l-1} $ the upper triangular matrices and by $  S_{2l} $ the lower triangular matrices appearing in the equation above for $ l=1,\dots,K+1$.   
\begin{rmk}
The matrix equation in \eqref{eq:stokesmanifold} is equivalent to three algebraically independent scalar equations for the Stokes parameters $ s_{j} $ and the formal monodromy exponent $ \alpha $ so that $ \dim \left(\mathfrak{S}_{K} \right)=2(K+1)+1-3=2K$, as it follows from \eqref{eq: general n dim sigma K}  for $n=2$. 
\end{rmk}

\subsection{Computation of ${\mathcal W_K}$}
We consider on $\mathfrak{S}_{K}  $ the 2-form \eqref{OmegaStokes}. 
Following  \cite{Bertola:2019ws} we introduce some basic definitions and properties  of the  $2$-form associated to a graph embedded in a surface, and we will see that the Stokes $2$-form can be conveniently interpreted within that formalism. This is indeed the key in order to compute it explicitly and find the log-canonical coordinates. 
\paragraph{Graph theory}
We  briefly  recall the definition of the standard 2-form associated to an oriented graph on a surface (see Section 2 of \cite{Bertola:2019ws} for more details).
Let $ \Sigma $ be an oriented graph on a surface, we denote with $ \mathbb{V}(\Sigma) $ the set of its vertices, $ \mathbb{E}(\Sigma) $ the set of its edges and $ \mathbb{F}(\Sigma) $ the set of its faces. A ``jump matrix'' $J$ is a map from $\mathbb E(\Sigma)$ to $SL_n$ with the properties that:
\begin{enumerate}
	\item for any edge $ e \in \mathbb{E}(\Sigma) $ we have 
	\begin{equation}
	J(-e)=J(e)^{-1}
	\end{equation}
	with $-e$ denoting the same edge $e$ with opposite orientation;
	\item for any vertex $ v \in \mathbb{V}(\Sigma) $ of valence $ n_{v} $ we have that the ordered counterclockwise product of the matrices associated to each edge oriented away from $ v $ is the identity. Namely:
	\begin{equation}\label{eq:mongraph}
	J(e_{1})\dots J(e_{n_{v}})=I_{n},
	\end{equation}
	where we ordered the edges $ e_{1},\dots , e_{n_{v}} $ incident at $v$ then counting them counterclockwise.
\end{enumerate}
To the pair $ (\Sigma, J) $, we can then associate the standard 2-form $ \Omega(\Sigma) $ defined hereafter.
\begin{defn}\label{def:standard2form}
	The standard 2-form $ \Omega(\Sigma) $ associated to the graph $ \Sigma $ is defined as follows (we omit explicit reference to the dependence on $J$ from the notation) 
	\begin{equation}\label{eq:2formstandard}
	\Omega(\Sigma):=
	\sum_{v \in \mathbb{V}(\Sigma)} \sum_{ k=1}^{n_{v}-1} \Tr \left( 
	 \left( K^{(v)}_{\left[ 1:k\right] }\right) ^{-1}\d K^{(v)}_{\left[ 1:k\right]} \wedge 
	 \left( J^{(v)}_{k}\right) ^{-1}\d J^{(v)}_{k} \right) .
	\end{equation}
	where in this formula  for any vertex $ v \in \mathbb{V}(\Sigma) $ we have taken the incident edges $e_1,\dots,e_{n_v}$ oriented away from $v$ and enumerated in counterclockwise order, starting from any of them. Here $ K^{(v)}_{\left[ 1:k\right]}=J_{1}\dots J_{k}$ with $J_i=J(e_i)$ for $i=1,\dots,n_v.$ Thanks to the property \eqref{eq:mongraph}, this 2-form is well defined, namely, independent of the choice of first edge in the cyclic order at each vertex.
\end{defn}
\begin{figure}[ht]
	\centering
	\includegraphics[width=0.28\textwidth]{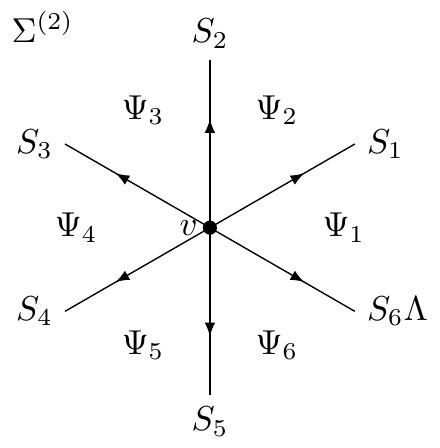}
\caption{The Stokes graph $ \Sigma^{(2)}$. 
}
	\label{fig:stokes graph formal mon}
\end{figure}
The form $\Omega(\Sigma)$ in Def. \ref{def:standard2form} is shown to be  invariant under certain transformations $(\Sigma, J)\mapsto (\Sigma',J')$ (called {\it moves}, see Section 2 of \cite{Bertola:2019ws}); these moves consist in the self--describing titles of 
\begin{enumerate}
\item edge contractions;
\item merging edges;
\item attaching edges to vertices (and the converse)
\end{enumerate}

In order to compute ${\mathcal W_K}$ we proceed as follows:  
it follows directly from the definition \eqref{eq:2formstandard} that the symplectic form ${\mathcal W_K}$ defined by \eqref{OmegaStokes} can be viewed (up to a factor) as the  2-form associated to the simple Stokes graph like the one in  Figure \ref{fig:stokes graph formal mon} for the exemplifying case $K=2$, with the indicated jump matrices. Namely,
\begin{equation}
\label{eq:relationstokes form and graph}
2{\mathcal W_K}= \Omega(\Sigma^{(K)}).
\end{equation}

The idea is to realize the simple graph $\Sigma^{(K)}$ as the complete contraction of all the (finite length) edges of another graph with explicit, simple jump matrices  that depend on {\it free} parameters (contrary to the Stokes' parameter that are subject to algebraic relations).

Consider the  graph $\Sigma_{0}^{(K)}$, exemplified  in Figure \ref{fig:mod graph form monodromy} for $K=2$: then   it is apparent that $\Sigma^{(K)}$  is the total contraction of  $\Sigma_{0}^{(K)}$.
The jump matrices for this graph are described in the following paragraph. 
The key fact is that the computation of the symplectic form associated to $\Sigma_0^{(K)}$ is then a straightforward exercise.  

Since the graphs $\Sigma_0^{(K)}$ and $\Sigma^{(K)}$ are related by the ``moves'' hinted at before and described in \cite{Bertola:2019ws}, the corresponding associated forms coincide:
 $\Omega\left( \Sigma_{0}^{(K)}\right) =\Omega\left(\Sigma^{(K)} \right) $.
  Then, by using the definition of the 2-form associated to a graph, we will compute explicitly the Stokes form, showing directly that it is indeed symplectic.
 
\begin{figure}
	\centering
\includegraphics[width=0.4\textwidth]{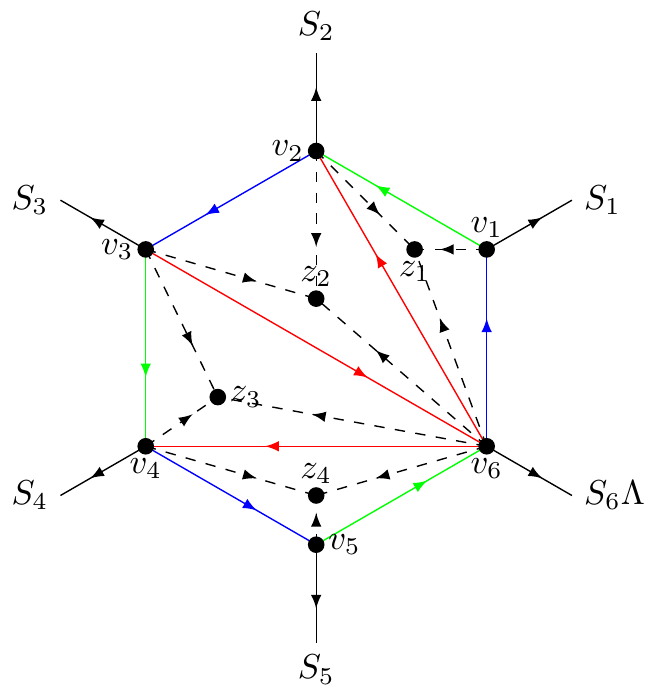}
	\caption{The modified graph $ \Sigma^{(2)}_{0} $. Here we take the triangulation $T_0$ of the hexagon that connects any of its vertices to $ v_{6}. $}
	\label{fig:mod graph form monodromy}
\end{figure}
\paragraph{The graph $\Sigma_{0}^{(K)}$ and its jump matrices.} The graph $\Sigma_0^{(K)}$ (see Fig. \ref{fig:mod graph form monodromy} for the example with $K=2$) is the graph consisting of $2(K+1)$ infinite rays emanating from the vertices of a regular $2K+2$--gon. The polygon is subdivided into triangles with a common vertex $v_{2K+2}$. We denote by $T_0$ this precise triangulation of the polygon. Inside each triangle we have a vertex $z_j$ and three edges from the three vertices bounding the triangle to the vertex $z_j$.  
We describe the jump matrices for $T_0$ with the understanding that, {\it mutatis mutandis}, the same matrices are defined for an arbitrary triangulation.
To each oriented edge of $ \Sigma_{0}^{(K)} $ we associate a matrix that is constant or depends on complex parameters $ y_{j} \in \mathbb{C}^{*}, j=1,\dots, 2K.$ The orientation is defined as follows: the perimeter of the polygon is oriented  counterclockwise  and as for the vertices $ z_{j} $, each edge is oriented towards the vertex $ z_{j} $. The internal diagonals of the triangulation are oriented in such a way that for every even perimetric vertex there is an even number on incident diagonals and for every odd vertex there is an odd number of incident diagonals. The Stokes rays are kept with the same orientation as in the Stokes graph. The matrices for each edge are defined as follows:
\begin{itemize}
	\item on the perimetric  edges connecting  $ v_{2k}\rightarrow v_{2k+1}$ for $k=1,\dots K$ and $v_{2K+2}\rightarrow v_1 \sim v_{2K+3}$ (the blue edges in Figure \ref{fig:mod graph form monodromy}), we take diagonal matrices of the form
	\begin{equation}\label{eq:blue perim matrix}
	D\left( x_{2k}\right):=\begin{pmatrix}
	 x_{2k}^{-1}&0\\
	0& x_{2k}
	\end{pmatrix} , 
	\end{equation}
	where $  x_{l} $ is the following product of $y_j$'s variables
	\begin{equation}
	\label{eq:x variable definition}
	x_{l} \coloneqq \prod_{1\leq k\leq l} \prod_{d_j \perp v_k}y_j^{(-1)^{k+1}},\;\; l=2,\dots,2K+1, \;\;\;x_{2K+2} \coloneqq y_1\prod_{d_j \perp v_1}y_j^{-1}
	\end{equation}
    \item on the perimetric edges connecting $  v_{2k+1}\rightarrow v_{2k+2}$ (the green edges in Figure \ref{fig:mod graph form monodromy}), we take off-diagonal matrices of the form
	\begin{equation}\label{eq:green perim matrix}
V\left( x_{2k+1}^{-1}\right)
:=\begin{pmatrix}
0&-x_{2k+1}^{-1}\\
x_{2k+1}& 0
\end{pmatrix},
\end{equation}
 and along the edge $v_1 \rightarrow v_2$ we impose the jump matrix $V(y_1^{-1})$;
	\item on the three edges incident to $ z_{j} $ (each of the dashed lines in Figure \ref{fig:mod graph form monodromy}) we associate the constant matrix 
	\begin{equation}
	A:=\begin{pmatrix}
	0&1\\
	-1&-1
	\end{pmatrix},
	\end{equation}
	that has the property $A^3=\Id.$
	\begin{rmk}
	In the $SL_n$ case, the matrix $A$ would be replaced by matrices $A_{1,2,3}$ that depend on $(n-1)(n-2)/2$ additional parameters for each triangle. These matrices are used in Sec. \ref{secUgaglia}.
	\end{rmk}

	\item on each internal diagonal edge $d_j$ for $j=2,\dots,2K$ defining the original triangulation $T$, we associate  off-diagonal matrices of the form $ V (y_j)$ given by
	\begin{equation}
	V (y_j)  :=\begin{pmatrix}
	0&-y_{j}\\
	y_{j}^{-1}&0
	\end{pmatrix} 
	\end{equation} 
	for $j=2,\dots,2K$ (these are the red edges of Figure \ref{fig:mod graph form monodromy}).  In this way each internal diagonal $d_j$ is uniquely associated to the free variable $y_j$, for $j=2,\dots,2K$.
\end{itemize}
\begin{rmk}
In this construction there  one among the boundary edges plays a distinguished role, namely, the one  laying to the left of the first Stokes ray. Indeed, the matrix associated to this edge is of the same type of the matrices associated to the internal diagonal edges of the triangulation $T$ and it depends only  on $y_1$.
\end{rmk}
The Stokes' matrices $S_j$ on the unbounded rays are then uniquely determined in terms of the remaining ones by the condition \eqref{eq:mongraph} at the corresponding vertex $v_j$. In this way each $ S_i $ is expressed in terms of the $y_j$ variables. Of course, for each triangulation, we will obtain different parametrization of the Stokes parameters and the transformation of coordinates will be investigated later.
\paragraph{The initial triangulation}
Consider now the triangulation $T_0$, underlying the graph $\Sigma_0^{(K)}$, where the last vertex $v_{2K+2}$ is connected to each other vertex starting from $v_2$, and with alternated orientation of the internal diagonals (as in Figure \ref{fig:mod graph form monodromy} for the case $K=2$). Then the Stokes matrices are given by 
\begin{equation}\label{eq:parametrization of Stokes matrices}
\begin{aligned}
&S_1 = \left(V(y_1^{-1}) A D(y_1)^{-1}\right) ^{-1}\\
&S_2 = \left( D(x_{2})AV(y_{2})^{-1}AV(y_{1}^{-1})^{-1}\right)^{-1},\\
& S_{2k}= \left( D(x_{2k})AV(y_{2k})^{-1}AV(x_{2k-1}^{-1})^{-1}\right)^{-1},\;\; k=2,\dots, K\\
& S_{2k+1}= \left( V(x_{2k+1}^{-1})AV(y_{2k+1}))A D(x_{2k})^{-1}\right) ^{-1},\;\; k=1,\dots, K-1\\
&S_{2K+1} =\left(  V(x_{2K+1}^{-1})A D(x_{2K})^{-1}\right) ^{-1}\\
&S_{2K+2}\Lambda = \left( D(y_1)\prod_{j=2}^{2K}\left(A V(y_j)^{(-1)^j} \right) A V(x_{2K+1}^{-1})^{-1}\right) ^{-1}
\end{aligned}
\end{equation}
The choice of the triangulation $T$  defines also the variables $x_l$. According to the general rule \eqref{eq:x variable definition} with the triangulation $T_0$ fixed here, this definition reduces to
\begin{equation}
\label{eq:variables x }
x_{l} \coloneqq \prod_{j=1}^{l}y_{j}^{(-1)^{j+1}} \;\; l=2,\dots, 2K,\;\;\; x_{2K+1}=x_{2K},\;\; x_{2K+2}\coloneqq y_1.
\end{equation}
These considerations are summarized in the following lemma.
\begin{prop}
\label{prop:1}
The Stokes parameters are written in terms of the $y_j$ variables, w.r.t. the fixed triangulation $T_0$ described above, as follows
\be
\label{eq:param stokes param y}
&s_1 = -y_1 ^{-2}\cr 
&s_{2k} = (1+y_{2k}^2) \prod_{1\leq j\leq 2k} y_{j}^{(-1)^{j+1}2},\;\; k=1,\dots , K \cr
& s_{2k+1} = -(1+y_{2k+1}^2) \prod_{1\leq j\leq 2k+1} y_{j}^{(-1)^j 2}, \;\; k=1,\dots , K-1\cr
&s_{2K+1} = - \prod_{1\leq j\leq 2K} y_{j}^{(-1)^j 2} ,\cr
&s_{2K+2} = y_1^{2}\left( 1+y_2^2\left( \dots \left(1+y_{2K}^2\right) \dots \right) \right) \prod_{j=1}^{K} y_{2j}^{-4}, \cr
&\lambda = \prod_{j=1}^{K}y_{2j}^2 .
\ee
\end{prop}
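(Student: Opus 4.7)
The proof is a direct verification: each $S_j$ on the right-hand side of \eqref{eq:parametrization of Stokes matrices} is an explicit product of $2\times 2$ matrices of types $D(x)$, $V(y)$, and $A$, so one simply carries out the multiplications and reads off the off-diagonal entry $s_j$, checking simultaneously the unipotency and that $S_{2k-1}$ is upper-triangular while $S_{2k}$ is lower-triangular.

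A convenient preliminary step is to compute once and for all the recurring building blocks $AV(y)^{\pm 1}A$ and $V(a)^{-1}AV(b)$ as explicit $2\times 2$ matrices; combined with the observation that conjugation by $D(x)$ rescales off-diagonal entries by $x^{\pm 2}$, these reduce each $S_j$ to a single short multiplication. I would dispatch the boundary cases $S_1$ and $S_{2K+1}$ directly: $V(y_1^{-1})\,A\,D(y_1)^{-1}$ is immediately seen to be unipotent upper-triangular with $(1,2)$-entry $y_1^{-2}$, giving $s_1=-y_1^{-2}$, and the symmetric three-factor product at the other end yields $s_{2K+1}=-\prod_{j=1}^{2K} y_j^{(-1)^j 2}$ (the absence of a $(1+y_{2K+1}^2)$ factor reflecting that there is no $V(y_{2K+1})$ in the extreme formula).

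For the bulk matrices $S_{2k}$ ($1\leq k\leq K$) and $S_{2k+1}$ ($1\leq k\leq K-1$), one multiplies out the five-factor product $D(x_{2k})\,A\,V(y_{2k})^{-1}\,A\,V(x_{2k-1}^{-1})^{-1}$ (and its upper-triangular analogue). The diagonal entries are forced to $1$ by the precise definition of $x_l$, while the off-diagonal entry comes out as $\pm(1+y_{2k}^2)\,x_{2k}^{2}$ (resp.\ $\pm(1+y_{2k+1}^2)\,x_{2k+1}^{-2}$); the $(1+y^2)$ factor arises as $y+y^{-1}=y^{-1}(1+y^2)$ upon collecting the products $V\cdot A\cdot V^{-1}$ in the center of the string. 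Substituting the definition $x_l=\prod_{j=1}^l y_j^{(-1)^{j+1}}$ then reproduces the stated formulas for $s_{2k}$ and $s_{2k+1}$; the signs $(-1)^{j+1}$ and $(-1)^j$ are consistent with the alternating triangularity of the $S_j$'s and the cyclic identity $A^3=\Id$.

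The formulas for $\lambda$ and $s_{2K+2}$ are extracted from the last line of \eqref{eq:parametrization of Stokes matrices}. The diagonal part of that product is forced to equal $\lambda^{\sigma_3}$, and a direct exponent count yields $\lambda=\prod_{j=1}^K y_{2j}^2$: the odd-indexed $y_j$'s cancel thanks to the alternating signs $(-1)^j$ in the factors $V(y_j)^{(-1)^j}$ combined with the flanking $D(y_1)$ and $D(x_{2K+1})^{-1}$. The off-diagonal part is then most cleanly obtained by induction on $K$: passing from the $(K-1)$-computation to the $K$-computation prepends one additional pair $A\,V(y_{2K})\,A\,V(y_{2K-1})^{-1}$, which contributes exactly one innermost $(1+y_{2K}^2)$ term and a rescaling by $y_{2K}^{-4}$, assembling into the nested expression $y_1^2\bigl(1+y_2^2(1+y_4^2(\cdots(1+y_{2K}^2)\cdots))\bigr)\prod_{j=1}^K y_{2j}^{-4}$ claimed. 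The main obstacle throughout is the exponent bookkeeping; a useful global cross-check is the monodromy identity $S_1\cdots S_{2K+2}\lambda^{\sigma_3}=\Id$, which must hold automatically for the formulas obtained and thus provides a redundancy check on the computation.
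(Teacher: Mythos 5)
Your proposal is correct and follows exactly the route the paper takes: the paper's proof is the one-line remark that the result follows by explicitly multiplying out the matrix products in \eqref{eq:parametrization of Stokes matrices} and substituting the definition \eqref{eq:variables x } of the $x_l$, which is precisely the computation you outline (your extra details — the $y+y^{-1}=y^{-1}(1+y^2)$ origin of the $(1+y_{2k}^2)$ factors, the diagonal cancellation giving $\lambda$, and the monodromy identity as a cross-check — are consistent with direct verification). No gap; same approach.
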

\begin{proof}
Just computing explicitly the parametrizations given from equations \eqref{eq:parametrization of Stokes matrices} and using the definition of the variables $x_{2k}, x_{2k+1}$ given in \eqref{eq:variables x }. 
\end{proof}
With this parametrization of the Stokes matrices we can then proceed to the computation of the Stokes form. 
\begin{prop}
The  2-form associated to the graph $\Sigma^{(K)}_0$ coincide with
\begin{equation}\label{eq:sympl 2 form in y variables}
\Omega  \left( \Sigma^{(K)}_0\right)  = 
{+}8 \sum_{\substack{j=1\\ l \geq j}}^{K} d\log y_{2j-1} \wedge d\log y_{2l}.
\end{equation}
In particular it is symplectic.
\end{prop}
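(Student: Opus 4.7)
My plan is to apply Definition \ref{def:standard2form} vertex-by-vertex to $\Sigma_0^{(K)}$. First I classify the vertices of $\Sigma_0^{(K)}$ into three groups: the $2K+2$ endpoints at infinity of the Stokes rays (valence $1$, hence no contribution since the sum $\sum_{k=1}^{n_v-1}$ is empty); the $2K$ interior star vertices $z_j$ (valence $3$, with all three incident jumps equal to the constant matrix $A$); and the $2K+2$ perimetric vertices $v_1,\ldots,v_{2K+2}$. Since $dA = 0$, the contribution from each $z_j$ vanishes identically, and the entire form is collected at the perimetric vertices.

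The key computational inputs are the pointwise identities
\begin{equation*}
D(x)^{-1}\,dD(x) = -\s_3\,d\log x,\qquad V(y)^{-1}\,dV(y) = -\s_3\,d\log y,\qquad dA = 0,\qquad A^3 = \Id.
\end{equation*}
At each perimetric vertex $v_m$ I would use the cyclic invariance of \eqref{eq:2formstandard} to place the outgoing Stokes ray first in the cyclic order; the no-monodromy relation \eqref{eq:mongraph} then pins down the Stokes matrix $S_m$ in terms of the remaining incident jumps. Expanding $K_{[1:k]}^{-1}\,dK_{[1:k]}$ by the Leibniz rule produces a finite sum of terms of the shape $M^{-1}\s_3 M\,d\log u$, where $M$ is a word in $A^{\pm 1}$, $D$'s and $V$'s and $u$ is one of the $y_j$'s or $x_l$'s meeting $v_m$. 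The trace $\Tr(M^{-1}\s_3 M\cdot \s_3)$ takes only finitely many integer values because $A^3 = \Id$ restricts the conjugation action by $A$'s to three cases, while the $D,V$ matrices merely commute or anticommute with $\s_3$.

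Summing the per-vertex contributions and substituting the definitions \eqref{eq:variables x } of $x_l$ as alternating products of $y_j$'s, I expect systematic cancellations: the ``same-parity'' wedges $d\log y_{2j-1}\wedge d\log y_{2l-1}$ and $d\log y_{2j}\wedge d\log y_{2l}$ should cancel because the alternating-orientation rule for the internal diagonals (even number at even perimetric vertices, odd at odd ones) makes each such term appear at the two endpoints of a shared diagonal with opposite signs, whereas the ``mixed-parity'' cross terms $d\log y_{2j-1}\wedge d\log y_{2l}$ survive with the triangular range $j\leq l$ reflecting which diagonals of the fan triangulation $T_0$ meet which perimetric vertex. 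The overall coefficient $8$ should emerge from combining $\Tr(\s_3^2) = 2$ with the quadratic powers of $y_j$ visible in the parametrization \eqref{eq:param stokes param y}.

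The most delicate step I anticipate is the bookkeeping at the apex vertex $v_{2K+2}$, which in the triangulation $T_0$ is incident to all $2K-1$ internal diagonals and therefore has maximal valence $2K+1$; the telescoping products $K_{[1:k]}$ there are long, and the failure of $\s_3$ to commute with $A$ makes the intermediate matrix expressions cumbersome. A cleaner alternative, which I would pursue if the direct calculation becomes unwieldy, is to argue by induction on $K$: the base case $K = 1$ (a quadrilateral with a single internal diagonal) is a short explicit check giving $\Omega(\Sigma_0^{(1)}) = 8\,d\log y_1\wedge d\log y_2$, and the inductive step consists in adjoining one further triangle to $T_0$ and verifying, via the invariance under the graph moves of \cite{Bertola:2019ws} and a purely local computation on the newly added subgraph, that exactly $8\sum_{j=1}^{K+1} d\log y_{2j-1}\wedge d\log y_{2(K+1)}$ is added to the form.
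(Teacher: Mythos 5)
Your plan coincides with the paper's proof strategy: a direct, vertex-by-vertex evaluation of Definition \ref{def:standard2form} on $\Sigma_0^{(K)}$, discarding the interior vertices $z_j$ (all incident jumps constant) and the rays at infinity, and reducing every surviving term to the identities $D(x)^{-1}\,\mathrm{d}D(x)=-\sigma_3\,\mathrm{d}\log x$ and $V(y)^{-1}\,\mathrm{d}V(y)=-\sigma_3\,\mathrm{d}\log y$ followed by traces against $\sigma_3$. All the right ingredients are on the table. The difficulty is that what you have written is a prediction of how the computation should come out, not the computation itself, and for a statement of this kind the computation \emph{is} the proof: the content of the proposition is precisely the list of per-vertex contributions and the verification that they sum to the stated form.

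Two of your anticipations are also not quite how the calculation actually goes, which is why the execution cannot be waved through. First, the cancellation of the same-parity wedges $\mathrm{d}\log y_{2j}\wedge\mathrm{d}\log y_{2l}$ and $\mathrm{d}\log y_{2j-1}\wedge\mathrm{d}\log y_{2l-1}$ is not a pairwise cancellation between the two endpoints of a shared diagonal: in the fan triangulation $T_0$ \emph{every} internal diagonal has the apex $v_{2K+2}$ as one endpoint, and the same-parity terms produced there only cancel against the same-parity terms generated at the outer vertices $v_m$ after one expands $\mathrm{d}\log x_{m\pm 1}$ as an alternating sum of $\mathrm{d}\log y_j$'s; the cancellation is global, across all perimetric vertices, not local along each diagonal. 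Second, the coefficient $8$ does not come from ``$\Tr(\sigma_3^2)=2$ combined with the quadratic powers of $y_j$ in the parametrization'': the form is computed directly in the variables $y_j$ (the squares only enter the expressions for the $s_j$), and the $8$ is assembled by summing contributions of magnitude $2$ and $4$ coming from several distinct vertices (the outer even vertices, the apex, and the perimetric edge carrying $V(x_{2K+1}^{-1})$ next to $S_{2K+2}\Lambda$). Incidentally, your valence count $2K+1$ for the apex omits the $2K$ dashed edges to the $z_j$'s and one perimetric edge. The inductive alternative you sketch is legitimate in principle, but its inductive step --- checking that adjoining one triangle adds exactly the claimed block of wedges --- is again exactly the local trace computation you have deferred, so it does not let you avoid the work.
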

\begin{proof}
The fact that the form is symplectic follows from  Theorem \ref{thmMalgrange} and the fact that the contraction of  $\Sigma_0^{(K)}$ coincides with the graph $\Sigma_K$  (see Fig. \ref{fig:stokes graph formal mon}); however the explicit expression \eqref{eq:sympl 2 form in y variables} is manifestly a nondegenerate form and so it could be used directly as a proof. 
By using the definition of the 2-form \eqref{eq:2formstandard}, we have to compute the contributions coming from each vertex $v_j, j=1, \dots 2K+2$ in the graph $\Sigma^{(K)}_0$. The vertices $z_j, j=1,\dots, 2K$ do not give any contribution since all their incident edges carry constant matrices. \\
We  start with the vertex $v_1$. Since the valence of $v_1$ is 4 and $A$ is a constant matrix, there is only one contribution to take into account from $v_1$, and it is
\begin{equation}
\Tr \left( 
\underbrace{ \left( V(y_1^{-1})AD(y_1)^{-1}\right)^{-1} d\left( V(y_1^{-1})AD(y_1)^{-1}\right)    }_{=S_1 d(S_1^{-1})}
\wedge
\underbrace{\left( D(y_1)d D(y_1)^{-1} \right) }_{=-d\log y_1 \sigma_3} \right)  =0
\end{equation}
that turns out to be also zero, thanks to the form of the Stokes matrices given in  \eqref{eq:parametrization of Stokes matrices}. Thus the total contribution of the vertex $v_1$ is actually zero.\\ 
Since the vertex $v_{2K+1}$ is in the same configuration of $v_1$, but replacing $D(y_1)$ by $D(x_{2K})$, by the same reasoning we can conclude that its contribution is also zero. \\
Now we compute the contributions of the vertices $v_{2k}$ for $k=1,\dots , K$. For each of them there is only one nonzero contribution and it is coming from the term
\begin{equation}
\begin{aligned}
&\Tr \left( 
\underbrace{\left(\left(D(x_{2k})AV(y_{2k})^{-1} \right)^{-1} d\left(D(x_{2k})AV(y_{2k})^{-1} \right)\right) }_{-d\log(x_{2k-1}) + E_{21}f(\vec{y})d\vec{y}}
\wedge
\underbrace{\left( V(y_{2k}) d(V(y_{2k})^{-1}) \right) }_{= - d\log y_{2k} \sigma_3} 
   \right) =\\ \\
&= {} 2 d\log x_{2k-1}\wedge d\log y_{2k}=\\
&={}2d\log \left(\prod_{j=1}^{2k-1}y_{j}^{(-1)^{j+1}} \right) \wedge d\log y_{2k} =\\
&={}
2d\log y_1 \wedge d\log y_{2k} 
{}
2 \sum_{l=2}^{k}d\log y_{2l-1} \wedge d \log y_{2k} 
{-}
2 \sum_{l=1}^{k-1} d\log y _{2l}\wedge d\log y_{2k}.
\end{aligned}
\end{equation}
Notice that for the case $k=1$ we only have the term $2d\log y_1 \wedge d\log y_{2}$.\\
A similar computation shows that the only nonzero contribution for the vertices $v_{2k+1}$ for $k=1,\dots, K-1$ is given by
\begin{equation}
\begin{aligned}
&\Tr \left(
\underbrace{\left(\left(V(x_{2k+1}^{-1})AV(y_{2k+1}) \right)^{-1} d \left(D(x_{2k+1})JAV(y_{2k+1})   \right)   \right)}_{=d\log x_{2k}\sigma_3 + E_{21}g(\vec{y})d\vec{y}} 
\wedge
\underbrace{\left( V(y_{2k+1})^{-1} d(V(y_{2k+1}))\right)}_{=-d\log y_{2k+1}} 
\right)  =\\ 
&={-}2d\log x_{2k} \wedge d\log y_{2k+1} =\\
&= {-}2 d \log \left(\prod_{j=1}^{2k}y_{j}^{(-1)^{j+1}} \right) \wedge d\log y_{2k+1}=\\
&={-} 
2d\log y_1 \wedge d\log y_{2k+1}
{-} 
2\sum_{j=2}^k d\log y_{2j-1}\wedge d\log y_{2k+1} 
{+}
 2 \sum_{j=1}^k d\log y_{2j} \wedge d\log y_{2k+1}.
\end{aligned}
\end{equation}
It  only  remains to compute the contribution of the vertex $v_{2K+2}$. The internal diagonals carrying  the variables $y_{2k}$ for $k=1,\dots,K$ give the contribution,
\begin{equation}
\begin{aligned}
C_1& \coloneqq \sum_{ k=1}^{K} \Tr\left( \left( V(y_{2k})^{-1}d(V(y_{2k}))\right) \wedge \left( D(y_1)\prod_{j=2}^{2k}A \left( V(y_j)\right) ^{(-1)^j}\right)^{-1} d \left( D(y_1)\prod_{j=2}^{2k}A\left(  V(y_j)\right) ^{(-1)^j}\right) \right) =\\
&= \sum_{ k=1}^{K} \Tr \left( -d\log y_{2k}\sigma_3 \wedge \left( -d\log y_1-\sum_{j=2}^{2k} d\log y_j\right)\sigma_3 \right) =\\
&=-2\sum_{k=1}^{K}d\log y_1 \wedge d\log y_{2k}+2 \sum_{ \substack{k=1\\ j \leq k}}^{K}\left( d\log y_{2k}\wedge d\log y_{2j} +d\log y_{2k}\wedge d\log y_{2j-1}\right) .
\end{aligned}
\end{equation}
The internal diagonals carrying on the variables $y_{2k+1}$ give instead the contribution
\begin{equation}
\begin{aligned}
C_2& \coloneqq \sum_{k=1}^{K-1} \Tr \left(
 \left(  D(y_1)\prod_{j=2}^{2k+1}A\left(  V(y_j)\right) ^{(-1)^j}\right) ^{-1} \d \left(  D(y_1)\prod_{j=2}^{2k+1}A\left(  V(y_j)\right) ^{(-1)^j}\right)
\wedge
\left( V(y_{2k+1}) \d(V(y_{2k+1})^{-1})\right) 
  \right) =\\
&= \sum_{ k=1}^{K-1}\Tr\left(
 \left(-d\log y_1-\sum_{j=2}^{2k+1} d\log y_j\right)\sigma_3 
 \wedge
 ( -d\log y_{2k+1} \sigma_3 )
 \right)=\\
&= 
2\sum_{ k=1}^{K-1}d\log y_1 \wedge d\log y_{2k+1}
{-}
 2\sum_{ \substack{k=2\\ j \leq k}}^{K-1}\left( d\log y_{2k+1}\wedge d\log y_{2j} 
+  d\log y_{2k+1}\wedge d\log y_{2j-1} \right) .
\end{aligned} 
\end{equation}
Finally the last edge on the right of the Stokes ray of $v_{2K+2}$ also give a nonzero contribution, that is 
\begin{equation}
\begin{aligned}
C_3&\coloneqq \Tr \left( 
(S_{2K+2}\Lambda) \d(S_{2K+2}\Lambda)^{-1} 
\wedge
\left( V(x_{2K+1}^{-1})\d(V(x_{2K+1}^{-1})^{-1})\right) 
\right) =\\
&= \Tr \left( 
\left( 2 \sum_{ l=1}^{K} \d\log y_{2l}\right)\sigma_3 
\wedge
\left( -\d\log y_1+\sum_{j=1}^{K}\left( - \d\log y_{2j+1} + \d\log y_{2j}\right)  \right)\sigma_3
\right) =\\
&= 
4\sum_{ l=1}^{K}d\log y_1 \wedge d\log y_{2l}  
{+}
4 \sum_{j=2}^{K}d\log y_{2j-1} \wedge \sum_{ l=1}^{K}d\log y_{2l} 
{-}
 4 \underbrace{\sum_{j=1}^{K}d\log y_{2j} \wedge \sum_{ l=1}^{K}d\log y_{2l}}_{=0}=\\
&=
4\sum_{ l=1}^{K}d\log y_1 \wedge d\log y_{2l}
{+}
4 \sum_{j=2}^{K}d\log y_{2j-1} \wedge \sum_{ l=1}^{K}d\log y_{2l} 
\end{aligned}
\end{equation}
where in the last equality we used the skew-symmetry of the wedge product. Now we can sum up all the nonzero  contributions coming from $v_{l}, l=2,\dots,2K+2$ and we obtain

\be
\nn
\Omega\left( \Sigma^{(K)}_0\right)& 
= 
2\sum_{k=1}^{K}\d\log y_1 \wedge \d\log y_{2k} 
{+}
2 \sum_{\substack{2\leq l\leq k \\k =2} }^{K}\d\log y_{2l-1} \wedge \d \log y_{2k} 
{-} 2 \sum_{\substack{1\leq l\leq k-1\\ k =2}}^{K} \d\log y _{2l}\wedge \d\log y_{2k}
\\
&  \nn
{-}
2\sum_{k=1}^{K-1}d\log y_1 \wedge \d\log y_{2k+1}
{-}
2\sum_{\substack{2\leq j\leq k\\ k=2}}^{K-1 }\d\log y_{2j-1}\wedge \d\log y_{2k+1} 
{+} 
 2 \sum_{\substack{1\leq j \leq k\\ k=1} }^{K-1} d\log y_{2j} \wedge d\log y_{2k+1}
\ee
\be
& {+}
2\sum_{k=1}^{K}\d\log y_1 \wedge \d\log y_{2k}
{-}
2 \sum_{ \substack{k=1\\ j \leq k}}^{K}\left( \d\log y_{2k}\wedge \d\log y_{2j} 
+
\d\log y_{2k}\wedge \d\log y_{2j-1}\right)\\
&{+}
2\sum_{ k=1}^{K-1}\d\log y_1 \wedge \d\log y_{2k+1}
{-} 2\sum_{ \substack{k=2\\ j \leq k}}^{K-1}\left( \d\log y_{2k+1}\wedge \d\log y_{2j} +  \d\log y_{2k+1}\wedge \d\log y_{2j-1} \right)\\
&
{+}
4\sum_{ l=1}^{K}\d\log y_1 \wedge \d\log y_{2l}
{+}
4 \sum_{j=2}^{K}\d\log y_{2j-1} \wedge \sum_{ l=1}^{K}\d\log y_{2l} \\
&= 
{-}8\sum_{ \substack{k=1\\ j \geq k}}^{K}\d\log y_{2k-1}\wedge \d\log y_{2j}
\ee

\end{proof}
By using relation \eqref{eq:relationstokes form and graph}, we can finally conclude that the Stokes 2-form ${\mathcal W_K}$ is written in terms of these $y_j$ variables as
\begin{equation}
{\mathcal W_K} = {}\frac{1}{2} \Omega\left( \Sigma^{(K)}\right) =  {}
\frac{1}{2} \Omega\left( \Sigma^{(K)}_0\right) = 
4 \sum_{ \substack{k=1\\ j \geq k}}^{K} d\log y_{2k-1}\wedge d\log y_{2j},
\end{equation}
and since it has maximal rank, it is a symplectic 2-form. \QED  
The Poisson structure induced by the the symplectic structure in the same variables will be then written as 
\begin{equation}\label{eq:poisson str}
\left\lbrace y_i, y_j\right\rbrace = \mathbf{P}^{ij}_{K} y_i y_j
\end{equation}
where $\mathbf{P}_{K} = \mathbf{\Omega}_{K}^{-t}$ 
 and $\mathbf{\Omega}_{K}$ is the matrix of coefficient of the Stokes 2-form w.r.t. the logarithmic variables $\log y_l$.
\begin{lem}
\label{lemmaPK}
The matrix  $\mathbf{P}_{K} $ is the $2K\times2K$ tridiagonal matrix given by
\begin{equation}
\label{eqPK}
\mathbf{P}_{K} = \frac{1}{4}
\begin{pmatrix}
0 & 1 & 0 &0 & 0 & \dots &0\\
-1&0&1 &0 & 0 & \dots &0 \\
0&-1&0&1&0&\dots&0\\
\vdots & &\ddots & \ddots & \ddots& &\vdots\\
\vdots & &&\ddots & \ddots & \ddots& \vdots\\
0&0&\dots & &- 1&0&1\\
0&0&\dots & & 0 & -1 &0
\end{pmatrix}
\end{equation}
\end{lem}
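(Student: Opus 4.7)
The plan is to invert $\mathbf{\Omega}_K$, since $\mathbf{P}_K=\mathbf{\Omega}_K^{-t}=-\mathbf{\Omega}_K^{-1}$ (both matrices are skew-symmetric). Reading the coefficient matrix off from the explicit expression of $\mathcal{W}_K$ computed just above, the only nonzero entries of $\mathbf{\Omega}_K$ pair an odd-indexed with an even-indexed coordinate: precisely $(\mathbf{\Omega}_K)_{2k-1,2j}=4$ for $1\leq k\leq j\leq K$, together with their skew-symmetric counterparts.

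The key step is to reorder the basis by listing the odd-indexed variables $y_1,y_3,\ldots,y_{2K-1}$ first and the even-indexed ones $y_2,y_4,\ldots,y_{2K}$ afterwards. In this reordered basis $\mathbf{\Omega}_K$ becomes block anti-diagonal,
\begin{equation*}
\tilde{\mathbf{\Omega}}_K = \begin{pmatrix} 0 & 4\,U\\ -4\,U^t & 0\end{pmatrix},
\end{equation*}
where $U$ is the $K\times K$ upper triangular matrix with all entries on and above the diagonal equal to $1$. A direct block inversion then gives
\begin{equation*}
\tilde{\mathbf{\Omega}}_K^{-1}= \tfrac14\begin{pmatrix} 0 & -(U^{-1})^{t}\\ U^{-1} & 0\end{pmatrix},
\end{equation*}
and $U^{-1}$ is the familiar upper bidiagonal matrix with $1$'s on the diagonal and $-1$'s on the superdiagonal.

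Permuting indices back to the natural order $(y_1,\ldots,y_{2K})$, the two nonzero entries of row $i$ of $U^{-1}$, located at columns $i$ and $i+1$, are mapped to the entries of $\mathbf{\Omega}_K^{-1}$ at positions $(2i,2i-1)$ and $(2i,2i+1)$ respectively. Filling in the skew-symmetric counterparts and flipping the sign to obtain $\mathbf{P}_K=-\mathbf{\Omega}_K^{-1}$ produces exactly the tridiagonal pattern displayed in \eqref{eqPK}, with $+\tfrac14$ on the superdiagonal and $-\tfrac14$ on the subdiagonal. The argument is pure linear algebra and presents no obstacle: the odd/even reordering is what makes the inversion transparent, and once $U^{-1}$ is identified as upper bidiagonal the tridiagonal structure of $\mathbf{P}_K$ follows automatically. (Alternatively, one may simply verify the identity $\mathbf{P}_K\mathbf{\Omega}_K=-I$ row by row, since tridiagonality limits each inner product to at most two terms.)
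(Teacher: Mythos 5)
Your proof is correct. The paper gives no argument for this lemma at all---it is stated as an immediate consequence of the expression $\mathcal W_K = 4\sum_{k\leq j} \d\log y_{2k-1}\wedge \d\log y_{2j}$ obtained just before, with the inversion of $\mathbf{\Omega}_K$ left implicit---so your write-up simply carries out the intended computation. The odd/even reordering turning $\mathbf{\Omega}_K$ into the block anti-diagonal form with the all-ones upper triangular block $U$, whose inverse is upper bidiagonal, is a clean way to make the inversion transparent, and the signs and the factor $\tfrac14$ check out against the convention $\mathbf{P}_K=\mathbf{\Omega}_K^{-t}=-\mathbf{\Omega}_K^{-1}$.
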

\subsection{Comparison between $\mathbf{P}_K$ and Poisson structure on $Y$-cluster manifold}
Let focus our attention on the matrix $\mathbf{B}_K \coloneqq 4 \mathbf{P}_K$.
\begin{defn}
Given a quiver $Q$ with labeled vertices $q_i, i=1,\dots, \#\mathbb{V}(Q)$, we call $B$ its adjacency matrix the skew-symmetric, integer-valued square matrix, of dimension $\#\mathbb{V}(Q)$, given by 
\begin{equation}
B_{kl}\coloneqq \#\left\lbrace \text{edges oriented from }\;q_k\; \text{to} \;q_l \right\rbrace - \#\left\lbrace\text{ edges oriented from} \;q_l\; \text{to}\; q_k \right\rbrace
\end{equation}
for $k,l=1,\dots, \#\mathbb{V}(Q)$.
\end{defn}
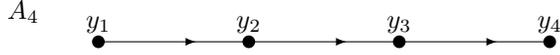
\begin{figure}
	\centering
	\begin{tikzpicture}[xscale=2,yscale=2, >=stealth']
	\draw[>=latex,directed] (-2,0)node[above]{$y_1$} to (-1,0) node[above]{$y_2$};
	\draw[>=latex,directed] (-1,0)  to (0,0)node[above]{$y_3$} ;
	\draw[>=latex,directed] (0,0) to (1,0) node[above]{$y_4$};
	\fill (-2,0)  circle[radius=1.2pt];
	\fill (-1,0)  circle[radius=1.2pt];
	\fill (0,0)  circle[radius=1.2pt];
	\fill (1,0)  circle[radius=1.2pt];
	\node at (-2.5,0.2) {$A_4$};
	\end{tikzpicture}
	\caption{The Dynkin diagram associated to the $4\times4 $ matrix $\mathbf{B}_2 $. This quiver can also be obtained following the construction described in the paragraph below with the triangulation of the hexagon fixed to be $T_0$.}
	\label{fig:dynkdiag}
\end{figure}
Then the matrix $\mathbf{B}_K$ can be identified as the directed adjacency matrix of a Dynkin graph of type $A_{2K}$ with specified orientation. An example for $K=2$ is given in Figure \ref{fig:dynkdiag}. There is a classical way to associate a directed graph to a triangulation of a given polygon (see for instance paragraph $2.1$ of \cite{GSV}). We slightly modify this construction, taking into account the fact that there is an edge along the perimeter of the polygon (the edge at the left of the first Stokes ray) that has a distinguished role in our case. We end up with the following graph $Q(T)$ for a given triangulation $T$ of the polygon:
\begin{itemize}
    \item the vertices of $Q(T)$ are defined one for each of the following edges of $T$: the edge along the perimeter at the left of the first Stokes ray and every internal diagonal edge of the triangulation $T$;
    \item the edges of $Q(T)$ are are build between each pair of vertices that lies on edges of the triangulation $T$ that share one of the endpoints and are immediately adjacent;
    \item the orientation of the edges of $Q(T)$ is defined as follows: an edge connecting the vertices $q_i$ and $q_j$ on the adjacent edges of $T$ $d_i$ and $d_j$ is oriented $q_i \rightarrow q_j$ if the edge $d_i$ immediately precedes $d_j$ counting counterclockwise the edges incident to their common endpoint. Otherwise it is oriented in the opposite way. For the vertex $y_1$ along  the edge on the right of the first Stokes ray (since on this edge we actually used the variable $ y_1^{-1}$ ) we reverse the orientation of all the edges of $Q(T)$ that have $y_1$ as endpoint.
\end{itemize}

With this construction, we obtain that   for the initial triangulation $T_0$ underlying $\Sigma_K^{(0)}$  the quiver $Q(T_0)$ is a Dynkin graph of type $A_{2K}$ with the orientation induced from $T_0$ (but each orientation of the same type of Dynkin graph is mutation equivalent, see Theorem $3.29$ of \cite{GSV}). \\ The matrix $\mathbf{B}_K$ gives  a compatible Poisson structure on the $Y$-cluster manifold which is defined by the ring of functions that are polynomials in all of the seeds obtained by subsequent {\it mutations}, defined below.
\begin{figure}
	\centering
\includegraphics[width=0.4\textwidth]{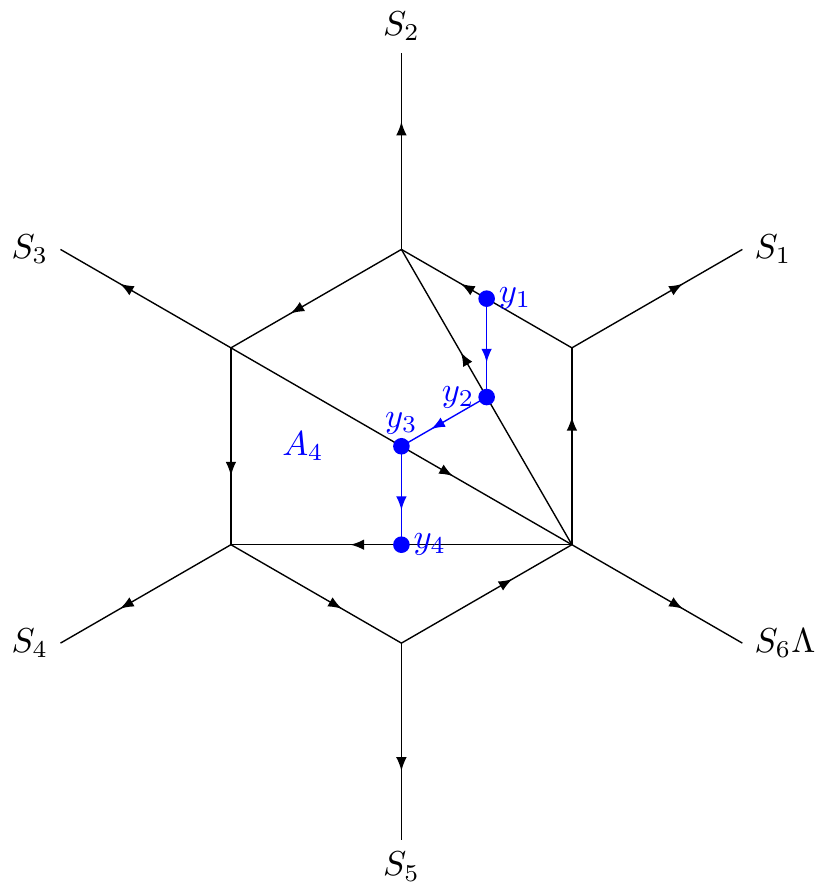}
	\caption{Here the triangulation $T$ of the hexagon and the variables $y_j$ assigned to the relevant edges induce the Dynkin diagram with variables $y_1, y_2, y_3, y_4$ in blue. }
	\label{fig:triang and dynk}
\end{figure}
\begin{defn}
A mutation $\mu_k(Q)$ w.r.t. a vertex $q_k \in \mathbb{V}(Q)$ of the quiver $Q$ is a new quiver defined by
\begin{itemize}
\item the same set of vertices, namely $\mathbb{V}(Q)=\mathbb{V}(\mu_k(Q))$;
\item the set of edges constructed as follows 
\begin{enumerate}
\item for any sequence $q_i \rightarrow q_k \rightarrow q_l$ add an edge $q_i\rightarrow q_l$,
\item reverse any edge having source or end in the vertex $q_k$,
\item remove every $2$-cycle if any.
\end{enumerate}
\end{itemize}
\end{defn}
Equivalently we can define the mutation $\mu_k(Q)$ of $Q$ through its adjacency matrix $\mu_k(B)$ that is given by the following equations
\begin{equation}
\mu_k(B)_{st}= 
\begin{cases}
-B_{st},\\
B_{st} + sign(B_{sk})\left[B_{sk},B_{kt} \right]_+,
\end{cases}\;\;
\begin{aligned}
&\text{for }\; s=k\; \text{or}\; s=t\\
& \text{otherwise}
\end{aligned}
\end{equation}
In our case of study, a set of variables $y_i\in\mathbb{C}^{*}$ one of each vertex $q_i$ is associated  to the quiver, for $ i=1,\dots,2K$. To each mutation $\mu_k(Q)$ of the quiver is then associated a new set of variables $\mu_k(\vec{y})$ following the equations in the definitions that we recall below (see also (1.30) in e.g. \cite{GSV}).
\begin{defn}
\label{defmuty}
A $Y$-mutation for the variables $y_i$ of the couple $(Q,\vec{y})$ is a new set of variables $\left(\mu_k(\vec{y})\right)_{i=1}^{2K}
,$ for $i=1,\dots 2K$ defined as rational functions of the $y_i$ in the following way
\begin{equation}
y_i'\coloneqq \left( \mu_k(\vec{y})\right)_i = \begin{cases}
y_k^{-1},\\
y_i \frac{y_k^{\left[B_{ik} \right]_+ }}{(1+y_k)^{B_{ik}}},
\end{cases}
\begin{aligned}
&\text{for }\;i=k\\
& \text{otherwise}
\end{aligned}
\end{equation}
\end{defn}

Every new pair $\mu_k(\vec{y},Q)=(\vec{y}',Q')$ obtained by an allowed mutation is called a {\it seed}.
In our case, we have that the initial quiver $Q(T)$ is the Dynkin graph of $A_{2K}$-type (for every $n\geq 1$) that is related to the triangulation $T$ of the polygon in $\Sigma^{(K)}_0$. The allowed mutations in this case are  with respect to  all the vertices with variables $y_2,\dots,y_{2K}$ (the ones associated to the internal diagonals of the triangulation $T$ of the polygon).
\begin{defn}
Given a pair $(\vec{y},Q)$ where $Q$ is a quiver with labeled vertices $q_i, i=1,\dots, \#\mathbb{V}(Q)$ and the variables $y_i\in\mathbb{C}^{*}$ are associated to each $q_i$, we call the $Y$-cluster algebra $\mathcal{A}_Y(Q)$ the sub-ring of all polynomials in $y_i$ and all their possible seeds $\mu_k(\vec{y},Q)$ where $\mu_k$ is a mutation w.r.t. the vertex $q_k$ with assigned variable $y_k$.
\end{defn}
\begin{defn}
Given a $Y$-cluster algebra, its correspondent $Y$-cluster manifold is defined as the smooth part of $Spec(\mathcal{A}_Y(Q))$.
\end{defn}
Denoting by $\mathcal{A}_{Y, i\neq 1}(A_{2K})$ the $Y$-cluster algebra described above for our case, then on its correspondent $Y$-cluster manifold  $\mathcal{M}\coloneqq Spec(\mathcal{A}_{Y, i\neq 1}(A_{2K}))$ there is a compatible Poisson structure having the form
\begin{equation}
\left\lbrace y_i, y_j\right\rbrace = \mathbf{B}_K y_i y_j. 
\end{equation}

Therefore we reach the conclusion that the Poisson structure (induced by the symplectic $2$-form ${\mathcal W_K}$) on the Stokes manifold $(\mathfrak{S}_K, \mathbf{P}_K)$ coincides with the Poisson structure of $(\mathcal{M},\mathbf{B}_K)$, up to a constant multiplicative factor.
\subsection{Flipping the edges}
\label{secflipping}
In the previous section we have established how to define the matrices and the variables $y_j, x_l$ associated to each edge of a given triangulation, in order to get a parametrization of the Stokes matrices. We also computed the Stokes matrices and the Stokes $2$-form for a fixed triangulation, seeing that its matrix coefficient is related to the matrix coefficient of the Poisson structure of the $Y$-cluster manifold of $A_{2K}$-type. \\
 We are now going to show that the $y$--variables associated to two triangulations $T$ and $\tilde{T}$ that are related by a single flip of one of their internal diagonal edges $d_j$,  are related by the rules of the mutation of seed variables (Def. \ref{defmuty}).
Subsequent  flips give different systems of equations for the variables, so we are going to study separately all the possible cases of flip. The equations between the old and the new $y$ variables are obtained by requiring that the Stokes matrices remain the same, independently of the triangulation.\\[5pt]
Consider a generic triangulation of the  $2(K+1)$-gon, and consider any quadrilateral  in the triangulation consisting of two triangles sharing an edge. Since we are considering the case $K\geq2$ we have the following possibilities for the sides of the quadrilateral:
\begin{enumerate}
	\item three sides lie along the perimeter of the polygon, one side is an internal diagonal; 
	\item two sides lie along the perimeter of the polygon and two sides are internal diagonals;
	\item one side is along the perimeter and the three others are internal diagonals;
	\item  all the four sides are internal diagonals.
\end{enumerate}
Notice that the two last cases can occur only for $K>2$. Moreover, the number of $y_j$ variables directly and nontrivially involved in the flip is equal to the number of sides of the quadrilateral that are internal diagonals. We are going to analyze the flip for each case. After the flip, we define some new variables associated to each edge of the new triangulation and we find the corresponding parametrizations of  the Stokes matrices in these new variables. Finally, by imposing the equality between these Stokes matrices, the ones parametrized w.r.t. the first triangulation and the other ones, we obtain an over-determined but compatible system of equations for the old variables and the new ones, $y_j$ and $\tilde{y}_j$. Indeed, notice that the $y_j$ variables are always $2K$ and we have an equation for each Stokes matrix, thus we have a system of $2K+2$ equations in $2K$ variables.
We will see that this system is equivalent to the $y$-mutation correspondent to the vertex on the flipped edge, in the quiver $Q(T)$ associated with the triangulation $T$.  
\paragraph{Case $1$.}
This is the case where three edges of the quadrilateral are along the perimeter. This means that we have only two variables $y$ that are directly and nontrivially involved in the flip. We can suppose that the first vertex, denoted by $v_{2i}$ (supposing that is in even position, the odd case is analogous) have valence only $6$ and that the last one have valence $9$, see Figure \ref{fig: flip case 1}. Every other case can be reduced to this one after an appropriate simplification in the equations we are going to obtain. We denote by $S_j$ the Stokes matrices obtained through the triangulation $T$ and by $\tilde{S}_j$ the ones obtained by the flip of $T$.
\\ 
First, we observe that for every $j\leq2i$ the Stokes matrices are parametrized exactly in the same way w.r.t. the $y_j$ variables and the $\tilde{y}_j$. Thus the equations $S_j(y_k)=\tilde{S}_j(\tilde{y}_k)$ tell us that $y_k=\tilde{y_k}$ for every $k$ that is not incident to $v_{2i},v_{2i+1}, v_{2i+2}$. As a byproduct also the variables $x_l=\tilde{x}_l$ for every $l\leq2i$ they remain invariant.\\
We focus on the equations $S_j(y_k)=\tilde{S}_j(\tilde{y}_k)$ for $k=2i,2i+1,2i+2,2i+3.$ We obtain an over-determined system of four equations from the following four matrix equations
\begin{equation}
    \begin{aligned}
    &D(x_{2i})AV(y_j)^{-1}A V(x_{2i-1}^{-1})^{-1}=D(\tilde{x}_{2i})AV(\tilde{y}_{j+1})^{-1}A V(\tilde{y}_{j+1})^{-1}A V(x_{2i-1}^{-1})^{-1}\\
    \\
    &V(x_{2i+1}^{-1}) A V(y_{j+1})AD(x_{2i})^{-1} = V(\tilde{x}_{2i+1}^{-1}) A D(\tilde{x}_{2i})^{-1}\\
    \\ 
    & D(x_{2i+2})A V(x_{2i+1}^{-1})^{-1} = D(\tilde{x}_{2i+1}^{-1}) AV(\tilde{y}_{j+1})^{-1}A V(\tilde{x}_{2i+2})^{-1}\\
    \\
    & V(x_{2i+3}^{-1})A V(y_{j-1})^{-1}AV(y_j)AV(y_{j+1})^{-1}AD(x_{2i+2})^{-1} = V(\tilde{x}_{2i+3}^{-1}) AV(\tilde{y}_{j-1})AV(\tilde{y}_{j})AD(\tilde{x}_{2i+2})^{-1}
    \end{aligned}
\end{equation}

 It follows then the following relations between the old and the new variables must hold
\begin{equation}
\label{eq: mutation case 1}
\tilde{y}_j^2=(1+y_{j+1}^2)y_j^2,\;\;\;\tilde{y}_{j+1}^2= \frac{1}{y_{j+1}^2}
\end{equation}
where $y_j$ is the variable on the diagonal $v_{2i}-v_{2i+3}$ and $y_{j+1}$ is the one on the diagonal $v_{2i+1}-v_{2i+3}$ as show in Figure \ref{fig: flip case 1}. One obtains these results from the second and third equation directly, then the other equations are automatically satisfied replacing these relations.  

\begin{figure}
    \centering
    
 \includegraphics[width=0.7\textwidth]{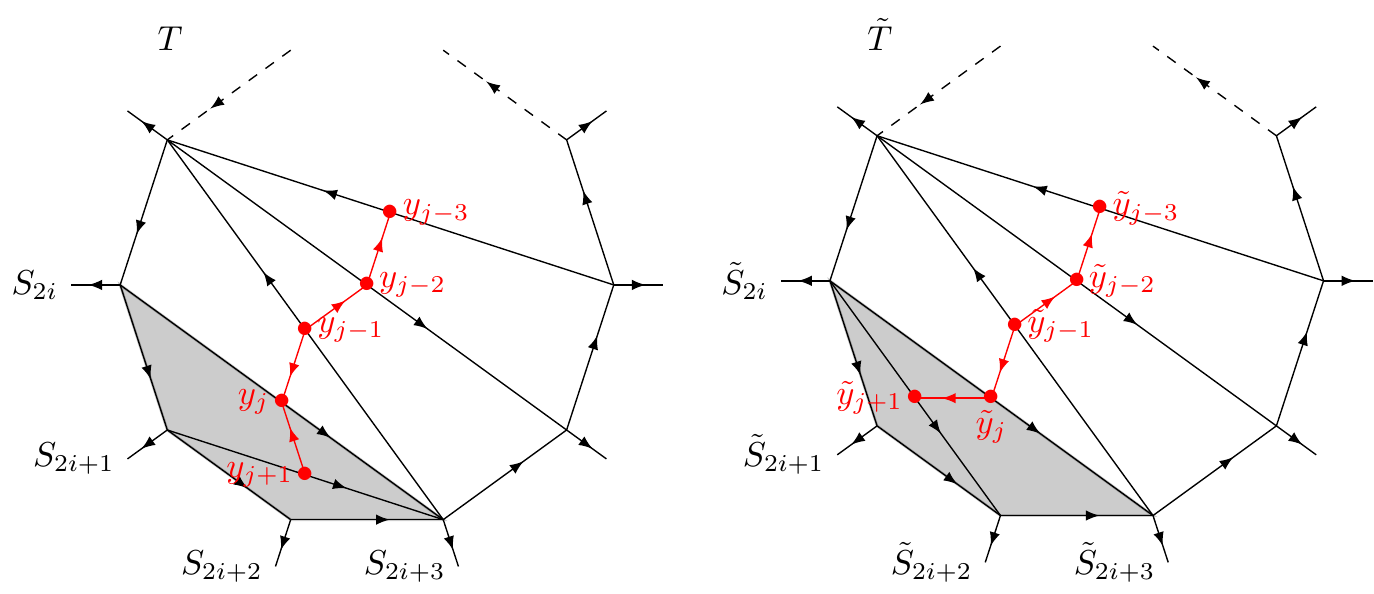}   
    \caption{A flip of a quadrilateral inside the triangulation $T$ with $3$ sides along the perimeter of the polygon and the new triangulation $\tilde{T}$ obtained in this way.}
    \label{fig: flip case 1}
\end{figure}
\paragraph{Case $2$.}
Here we consider the case where there are two edges of the quadrilateral on the perimeter of the polygon, and the other two edges are internal diagonals.  We can suppose as before that the first vertex is even $v_{2i}$. Also, we can assume that $v_{2i}, v_{2i+4}$ both have valence $8$ and $v_{2i+3}$ has valence $4$. Then all the other cases (when the valences of these vertices are higher) can be reduced to this one, after appropriate simplification. In this case three variables $y$  are directly involved in the flip. Indeed, by the fact that $S_j(y_k)=\tilde{S}_j(\tilde{y}_k)$ for every $j$, we obtain that $y_l=\tilde{y}_l$ for any index $l$ that is not incident to $v_{2i}, v_{2i+1}, v_{2i+2}, v_{2i+3}$ and also for all the variables that stay on the right of the $y_j$ diagonal, see Figure \ref{fig: flip case 2}.
Furthermore, by looking at $j=2i,2i+1,2i+2,2i+3$ we obtain the following over-determined system of four equations, from the four matrix equations
\begin{equation}
  \begin{aligned}
    &D(x_{2i})AV(y_{j+1})AV(y_j)^{-1}A V(x_{2i-1}^{-1})^{-1}=D(\tilde{x}_{2i})AV(\tilde{y}_{j})A  V(\tilde{x}_{2i-1}^{-1})^{-1}\\
    \\
    &V(x_{2i+1}^{-1}) A V(y_{j+2})AD(x_{2i})^{-1} = V(\tilde{x}_{2i+1}^{-1}) AV(\tilde{y}_{j+2})AV(\tilde{y}_{j+1})A D(\tilde{x}_{2i})^{-1}\\
    \\ 
    & D(x_{2i+2})AV(x_{2i+1})^{-1}= D(\tilde{x}_{2i+2}) A V(\tilde{x}_{2i+1})^{-1}\\    
    \\
    & V(x_{2i+3}^{-1})A V(y_{j+1})^{-1}AV(y_{j+2})^{-1}A D(x_{2i+2})^{-1} = V(\tilde{x}_{2i+3}^{-1}) A V(\tilde{y}_{j+2})AD(\tilde{x}_{2i+2})^{-1}.
    \end{aligned}   
\end{equation}
In particular, from the first three equations we obtain the following relations between the old and the new variables
\begin{equation}
\label{eq: mutation case 2}
    \tilde{y}_j^2= y_j^2\frac{y_{j+1}^2}{1+y_{j+1}^2},\;\;\; \tilde{y}_{j+1}^2=\frac{1}{y_{j+1}^2},\;\;\;\tilde{y}_{j+2}^2=y_{j+2}^2(1+y_{j+1}^2),
\end{equation}
and all the other equations are then satisfied by replacing these quantities (included the equation for $j=2i+4$).

\begin{figure}
    \centering
    \includegraphics[width=0.7\textwidth]{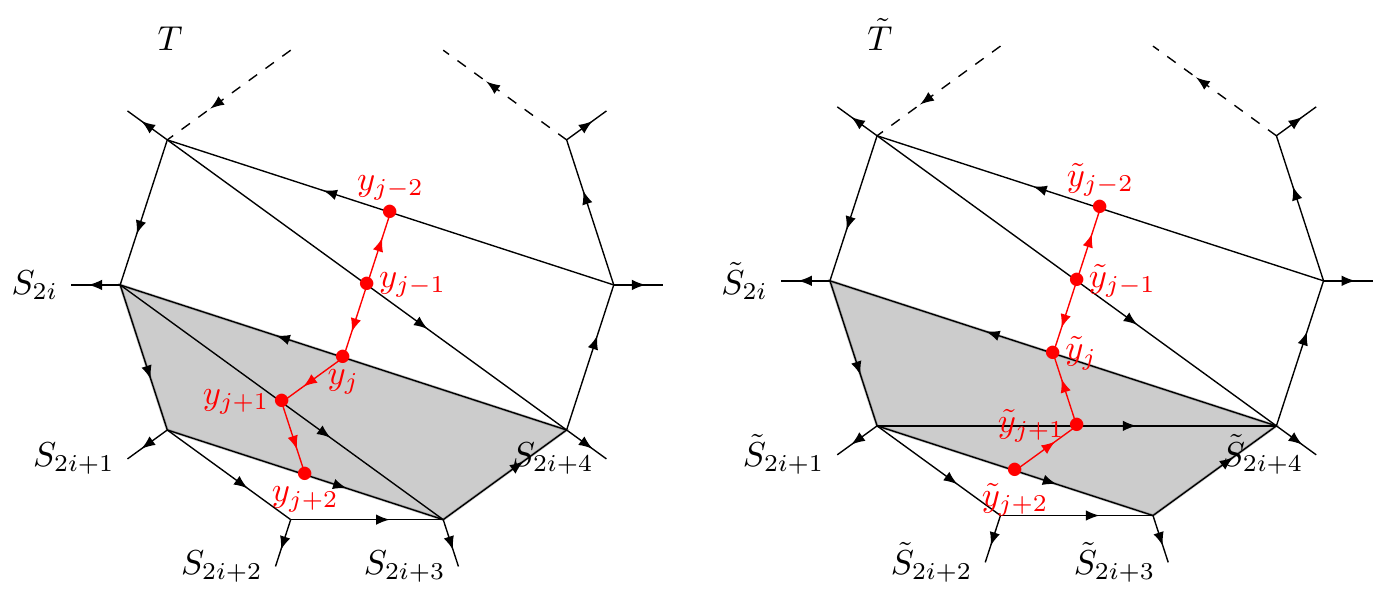}
    \caption{A flip of a quadrilateral inside the triangulation $T$ with $2$ sides along the perimeter of the polygon and the new triangulation $\tilde{T}$ obtained in this way.}
    \label{fig: flip case 2}
\end{figure}
	
\paragraph{Case $3$.}
Here we consider the case where three edges of the quadrilateral are internal diagonals of the polygon and only one edge is on its perimeter. Notice that this means that there are four variables $y$ that are nontrivially involved in the flip. We suppose as before that the first edge considered is even $v_{2i}$ and that all the vertices involved in the quadrilateral and their adjacent vertices have minimal valence, as in Figure \ref{fig: flip case 3}. As in the previous cases, the equations $S_l(y_k)=\tilde{S}_l(\tilde{y}k)$ for the indices $l\neq 2i, \dots, 2i+4$ give that the variables $y_k=\tilde{y}_k$ for the $k$ that are not incident to the vertices $v_{ 2i}, \dots,v_{ 2i+4}$. Then looking at the matrix  equations for $l=2i,\dots2i+4$ we have the four matrix equations
\begin{equation}
\begin{aligned}
   & D(\tilde{x}_{2i})AV(\tilde{y}_j)AV(\tilde{x}_{2i-1}^{-1})= D(x_{2i})AV(y_{j+1})AV(y_{j})^{-1}AV(x_{2i-1}^{-1})\\
   \\
   & V(\tilde{x}_{2i+1}) A V(\tilde{y}_{j+2})AV(\tilde{y}_{j+1})AD(\tilde{x}_{2i})^{-1} = V(x_{2i+1}^{-1})AV(y_{j+2})AD(x_{2i})^{-1}\\
   \\
   &D(\tilde{x}_{2i+2})AV(\tilde{x}_{2i+1}^{-1})^{-1} = D(x_{2i+2})AV(x_{2i+1}^{-1})^{-1}\\
   \\
   &V(\tilde{x}_{2i+3}^{-1})AV(\tilde{y}_{j+3})^{-1}AV(\tilde{y}_{j+2})^{-1}AD(\tilde{x}_{2i+2})^{-1}=V(x_{2i+3}^{-1})AV(y_{j+3})^{-1}A V(y_{j+1})^{-1}AV(y_{j+2})^{-1}AD(x_{2i+2})^{-1}.
\end{aligned}
\end{equation}
From these equations we obtain that the old variables and the new variables are related through the following  relations 
\begin{equation}
\label{eq: mutation case 3}
    \tilde{y}_j^2= y_{j+1}^2\frac{y_j^2}{1+y_{j+1}^2},\;\;\; \tilde{y}_{j+1}^2=\frac{1}{y_{j+1}^2},\;\;\; \tilde{y}_{j+2}^2= y_{j+2}^2\frac{y_{j+1}^2}{1+y_{j+1}^2},\;\;\; \tilde{y}_{j+3}^2= y_{j+3}^2(1+y_{j+1}^2)
\end{equation}
and all the other equations (included for the vertices $v_{2i+4},v_{2i+5}$) are identically satisfied once we replace the relations above.
\begin{figure}
    \centering

    \includegraphics[width=0.7\textwidth]{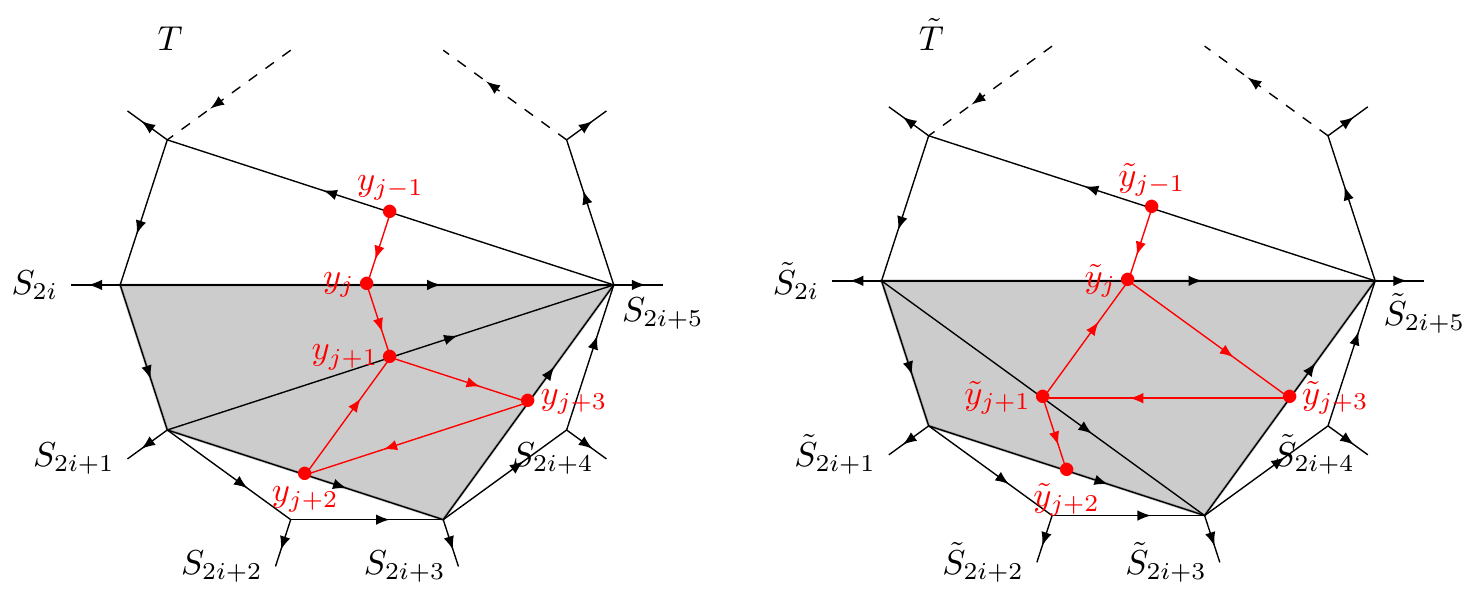}
    \caption{A flip of a quadrilateral inside the triangulation $T$ with only $1$ side along the perimeter of the polygon and the new triangulation $\tilde{T}$ obtained in this way.}
    \label{fig: flip case 3}
\end{figure}
	\paragraph{Case $4$.} Here we consider the case where all the sides of the quadrilateral are internal diagonals. We suppose, as always, to have the first vertex that is even $v_{2i}$ and that each vertex has minimal valence, as in Figure \ref{fig: flip case 4}. Every other case, with higher order valence for the vertices involved, can be reduced to this one after appropriate simplification. In this case, we have five variables $y$ directly involved in the flip, thus we will have one more equation than in the other cases.\\
	By looking at the equations $S_l(y_k)=\tilde{S}_l(\tilde{y}_k)$ for $l\neq 2i,\dots, 2i+5$, we get that $y_k=\tilde{y}_k$ for every index $k$ that is not adjacent to the flipped edge with coordinate $y_{j+4}$. Then by looking at the equations for $l=2i,\dots, 2i+4$ we have the following matrix-valued system
	\begin{equation}
	    \begin{aligned}
	        &D(x_{2i})AV(y_j)AV(y_{j+1})^{-1}AV(y_{j+3})AV(x_{2i-1})^{-1}=D(\tilde{x}_{2i})AV(\tilde{y}_j)AV(\tilde{y}_{j+3})AV(\tilde{x}_{2i-1})^{-1}\\
	        \\
	        &V(x_{2i+1})AD(x_{2i})^{-1}=V(\tilde{x}_{2i+1})AD(\tilde{x}_{2i})^{-1}\\
	        \\
	        &D(x_{2i+2})AV(y_{j+1})AV(y_{j})^{-1}AV(x_{2i+1})^{-1}=D(\tilde{x}_{2i+2})AV(\tilde{y}_{j+1})AV(\tilde{y}_{j+4})AV(\tilde{y}_{j})^{-1}AV(\tilde{x}_{2i-1})^{-1}\\
	        \\
	        &V(x_{2i+3})AD(x_{2i+2})^{-1}=V(\tilde{x}_{2i+3})AD(\tilde{x}_{2i+2})^{-1}\\
	        \\
	        & D(x_{2i+4})AV(y_{j+2})^{-1}AV(y_{j+4})AV(y_{j+1})^{-1}AV(x_{2i+3})^{-1}=D(\tilde{x}_{2i+2})AV(\tilde{y}_{j+2})AV(\tilde{y}_{j+1})^{-1}AV(\tilde{x}_{2i+3})^{-1}.
	    \end{aligned}
	\end{equation}
	This system is solved through the following relations between the old and the new variables 
	\begin{equation}
	\label{eq: mutation case 4}
	   \tilde{y}_j^2=y_j^2(1+y_{j+4}^2),\;\;\; \tilde{y}_{j+1}^2=y_{j+1}^2\frac{y_{j+4}^2}{1+y_{j+4}^2},\;\;\;\tilde{y}_{j+2}^2=y_{j+2}^2(1+y_{j+4}^2),\;\;\; \tilde{y}_{j+3}^2=y_{j+3}^2\frac{y_{j+4}^2}{1+y_{j+4}^2}, \;\;\; \tilde{y}_{j+4}^2=\frac{1}{y_{j+4}^2}
	\end{equation}
	and they also satisfy the equations for $l=2i+5,2i+6.$
\begin{figure}
    \centering

    \includegraphics[width=0.7\textwidth]{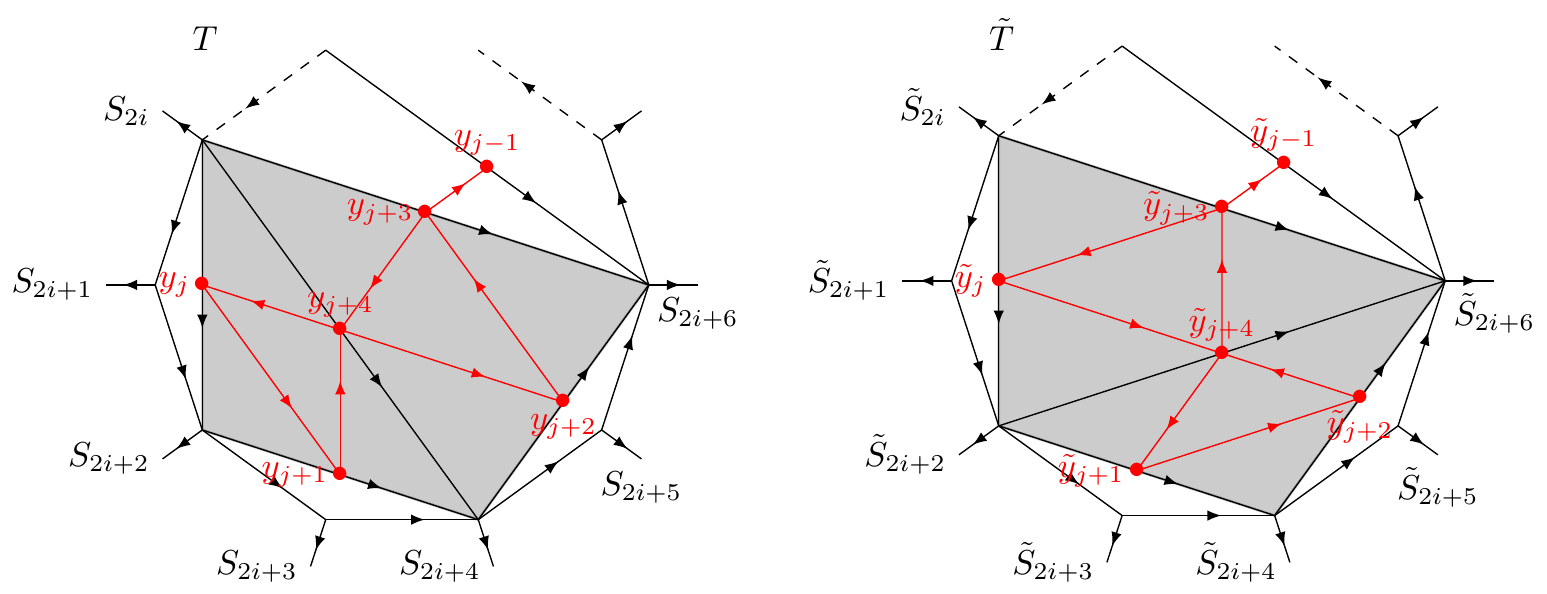}
    \caption{A flip of a quadrilateral inside the triangulation $T$ with no sides along the perimeter of the polygon and the new triangulation $\tilde{T}$ obtained in this way.}
    \label{fig: flip case 4}
\end{figure}

We observe that in each case we obtained that the system of equations for the old and new $y$ variables obtained from the matrix equations $S_l(y_k)=\tilde{S}_l(\tilde{y}_k)$ is solved by some  $y$-mutation relations of the Dynkin diagram of $A_{2K}$-type. In particular, every set of equations \eqref{eq: mutation case 1}, \eqref{eq: mutation case 2}, \eqref{eq: mutation case 3}, \eqref{eq: mutation case 4} coincide with the $y$-mutation w.r.t. the vertex $y_l$ associated to the flipped edge of the triangulation $T$ of the polygon, of the Dynkin diagram of $A_{2K}$-type associated to the triangulation $T$ for the square of its variables. 
\subsection{Example: the case $K=2$}	
We work out on the case $K=2$, i.e. the case of the hexagon.
In particular, we are going to take the fixed triangulation $T_0$ of the hexagon (e.g. the one in Figure \ref{fig:triang and dynk}), and we consider the variables and the matrices associated to each edge of the graph in the common way explained before. We compute then the Stokes matrices and the Stokes $2$-form ${\mathcal W_2}$ in these variables.\\ 
Then, we consider all the possible flip of this triangulation, w.r.t. the edges with variables $y_2,y_3,y_4$ as in Figure \ref{fig:4 triangu K=2}, and we perform the same computations above with the new variables associated to each new triangulation obtained in that way. We will see that in each case, the inverse of the matrix coefficient of the Stokes $2$-form is, up to the same factor $-\frac{1}{4}$ the adjacency matrix of a certain mutation of the $A_4$ Dynkin diagram, the one given in Figure \ref{fig:dynkdiag}. 

\begin{figure}
	\centering

\includegraphics[width=0.6\textwidth]{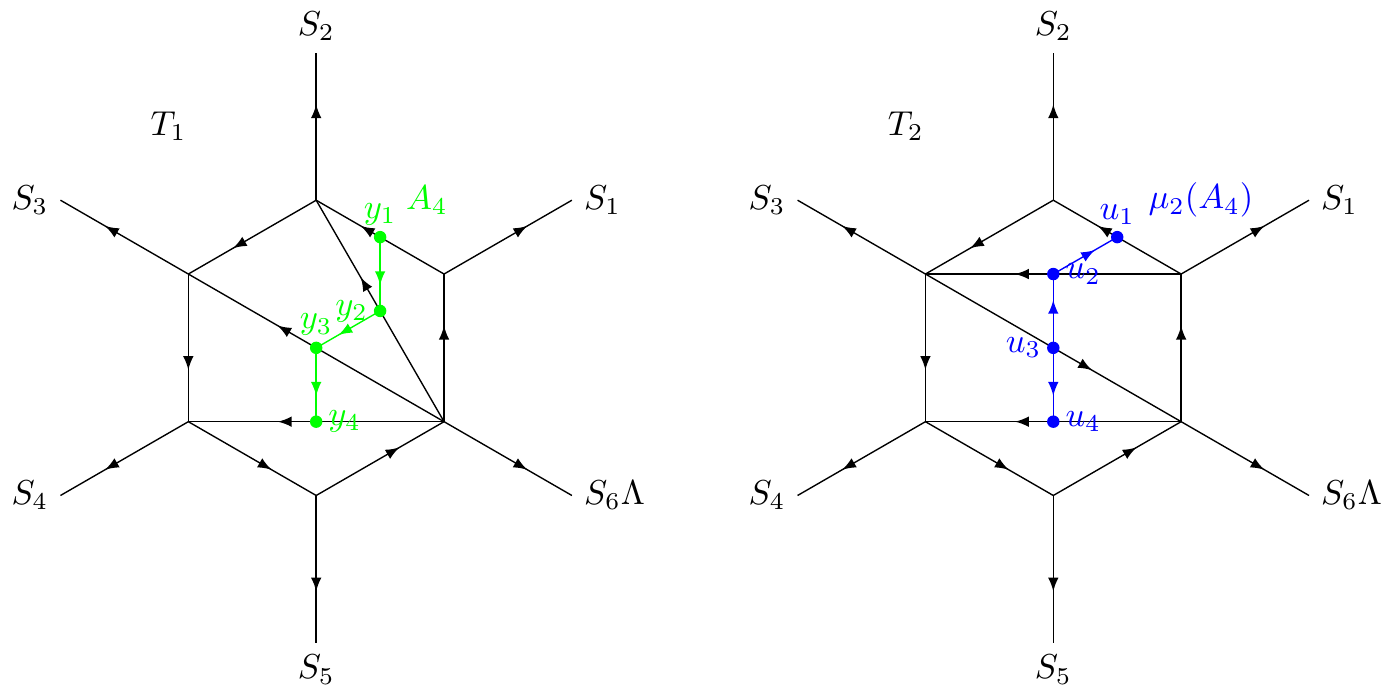}\\
\includegraphics[width=0.6\textwidth]{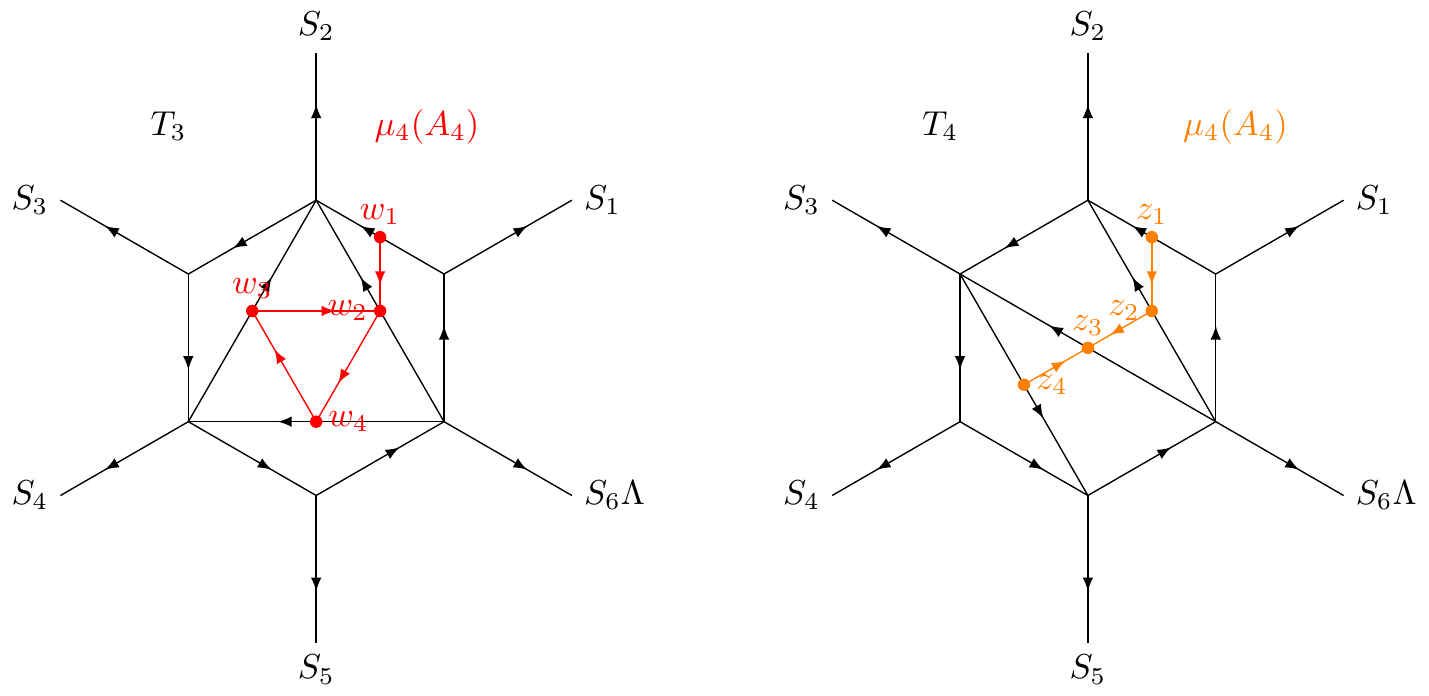}

	\caption{The 4 triangulations considered are $T_1$ and then all the others obtained from $T_1$ by a flip of one of the diagonals $d_j$ for $j=2,3,4.$}
	\label{fig:4 triangu K=2}
\end{figure}
\begin{itemize}
\item For the triangulation $T_1$ the variables $x_l$ are
\begin{equation}
x_2=y_1y_2^{-1},\;\; x_3 = y_1y_2^{-1}y_3, \;\; x_4= y_1y_2^{-1}y_3y_4^{-1},\;\; x_5=x_4, \;\; x_6=y_1.
\end{equation}
The $2$-form ${\mathcal W_2}^{T_1}$ is log-canonical in the variables $y_i$ and such that its matrix coefficient has inverse 
\begin{equation}
\mathbf{P}_2 ^{T_1}=\frac{1}{4}
\begin{pmatrix}
0&1&0&0\\
-1&0&1&0\\
0&-1&0&1\\
0&0&-1&0
\end{pmatrix} =\frac{1}{4} Adj_{A_4}.
\end{equation}
\item For the triangulation $T_2$ the variables $x_l$ are
\begin{equation}
\begin{aligned}
& x_2 = u_1, \;\; x_3=  u_1 u_2 u_3,\;\;  x_4= u_1u_2 u_3u_4^{-1},\;\;x_5=u_1 u_2 u_3u_4^{-1},\;\;x_6 = u_1u_2^{-1}.
\end{aligned}
\end{equation}
The $2$-form ${\mathcal W_2}^{T_2}$ is log-canonical in the variables $y_i$ and such that the inverse of its coefficient matrix, namely $\mathbf{P}_2^{T_2}$ gives 
\[\mathbf{P}_2 ^{T_2}=\frac{1}{4}
\begin{pmatrix}
0&-1&0&0\\
1&0&-1&0\\
0&1&0&1\\
0&0&-1&0
\end{pmatrix} =\frac{1}{4} Adj_{\mu_2(A_4)}.\]
\item For the triangulation $T_3$ the variables $x_l$ are
\begin{equation}
x_2=w_1w_2^{-1}w_3^{-1},\;\; x_3=x2, \;\;x_4 =  w_1w_2^{-1}w_3^{-2}w_4^{-1},\;\; x_5=x_4,\;\; x_6=w_1.
\end{equation}
The $2$-form ${\mathcal W_2}^{T_3}$ is such that the inverse of its coefficient matrix, namely $\mathbf{P}_2^{T_3}$ gives 
\[\mathbf{P}_2 ^{T_3}=\frac{1}{4}
\begin{pmatrix}
0&1&0&0\\
-1&0&-1&1\\
0&1&0&-1\\
0&-1&1&0
\end{pmatrix} =\frac{1}{4} Adj_{\mu_3(A_4)}.\]
\item  For the triangulation $T_4$ the variables $x_l$ are
\begin{equation}
x_2=t_1t_{2}^{-1},\;\; x_3=t_1t_{2}^{-1} t_3t_4,\;\; x_3=t_4,\;\; x_5= t_1t_{2}^{-1} t_3t_4^2.
\end{equation}
The $2$-form ${\mathcal W_2}^{T_4}$ is such that the inverse of its coefficient matrix, namely $\mathbf{P}_2^{T_4}$ gives 
\[\mathbf{P}_2 ^{T_4}=\frac{1}{4}
\begin{pmatrix}
0&1&0&0\\
-1&0&1&0\\
0&-1&0&-1\\
0&0&1&0
\end{pmatrix} =\frac{1}{4} Adj_{\mu_4(A_4)}.\]
\end{itemize}
Furthermore the equations $S_i(\vec{y})=\tilde{S}_i(\vec{u})$ that impose the Stokes equations parametrized in the 2 triangulations $T_1$ and $T_j$ to be equal, give exactly that $u_i^2, w_i^2$ or $t_i^2$ respectively for $j=2,3,4$ are $y$-mutation of $y_i^2$ related to $A_4$ w.r.t. the vertices $y_2, y_3, y_4$. 
\subsection{Computation of the Poisson brackets for the original monodromy parameters}
In the previous sections we have parametrized the Stokes manifold $\mathfrak{S}_K$  of dimension $2K$, by using the variables $y_j$ for $j=1,\dots,2K$ of the $A_{2K}$ cluster algebra type. Using this parametrization, explicitly computed in Lemma \ref{prop:1}, we also proved that the two-form ${\mathcal W_K}$ defined on $\mathfrak{S}_K$ is symplectic and that the variables $y_j$ are log-canonical for this two form. We also computed the Poisson brackets $\mathbf{P}_K$ induced by the symplectic structure ${\mathcal W_K}$ on $\mathfrak{S}_K$. Now, we want to compute these Poisson brackets $\mathbf{P}_k$ on the parametrization of the original monodromy parameters $s_j,$ for $j=1,\dots,2k+2$ and $\lambda$ describing $\mathfrak{S}_K.$
In particular, we would like to show that the Poisson brackets $\mathbf{P}_K$ for the $y_j$ defined in \eqref{eq:poisson str} are a log-canonical formulation of the following bracket.
\begin{defn}
\label{FNPB}
Consider the nonlinear Poisson bracket on $\mathbb{C}^{2K+2}\times \mathbb{C}^*$ with coordinates $(s_1,\dots, s_{2L},\lambda)$ given by 
\begin{equation}
\begin{aligned}
\label{eq:PB_K fl newell} 
\Big\{s_j,s_l \Big\}_{_{FN}} & =  \delta_{j,l-1} - \frac{\delta_{j,1}\delta_{l, 2K+2}} {\lambda^{2}}  + (-1)^{j-l+1} s_j s_l, \qquad j<l.\cr\\
\Big\{ s_j,\lambda \Big\}_{_{FN}} &=(-1)^{j}  s_j\lambda.
\end{aligned}
\end{equation}
\end{defn}
These Poisson structure first appeared in \cite{FlaschkaNewell} (see section $3,5$).

\begin{prop}
\label{propideal}
Let 
\be
F= F_K= \begin{pmatrix}
1&s_{1}\\
0&1
\end{pmatrix} 
\begin{pmatrix}
1&0\\
s_{2}&1
\end{pmatrix} \dots
\begin{pmatrix}
1&s_{2K+1}\\
0&1
\end{pmatrix}
\begin{pmatrix}
1&0\\
s_{2K+2}&1
\end{pmatrix} \lambda^{\s_3}.
\ee
Let $\s_3,\s_+, s_-$ be the matrices 
\be
\s_3 = \begin{pmatrix}
1&0\\
0&-1
\end{pmatrix}
,\ \s_+ = \begin{pmatrix}
0&1\\
0&0
\end{pmatrix},\ \s_- = \begin{pmatrix}
0&0\\
1&0
\end{pmatrix}
\ee
{\bf(1)} The matrix  $F$ satisfies
\be
\label{ideal}
\{s_1,F\}_{_{FN}} &= \frac {s_1}2[\s_3,F] + [\s_-,F]\cr
\{s_{2K+2}, F\}_{_{FN}} &= \frac {s_{2K+2}}2 [F, \s_3]+ \frac 1{\l^2}[\s_+,F]\cr
\{s_\ell, F\}_{_{FN}} &= (-1)^\ell[F, \s_3], \ \ \ 2\leq \ell\leq 2k+1\cr
\{\l, F\}_{_{FN}} & = \frac 1 2[\s_3,F].
\ee
{\bf (2)} The unique Casimir function for the bracket \eqref{PB_K} is $\mathfrak C= \Tr (F)$; \\
{\bf (3)} The  sub-varieties $\S_K= \{F_{K}= \Id \}$ are Poisson sub-varieties.
\end{prop}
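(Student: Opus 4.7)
The plan is to verify (1) by direct Leibniz--rule computation, with (2) and (3) following as quick corollaries. Introduce the left/right partial products $L_j := S_1\cdots S_j$ and $R_j := S_j\cdots S_{2K+2}\lambda^{\sigma_3}$ (with $L_0=\mathrm{Id}$, $R_{2K+3}=\lambda^{\sigma_3}$), so that $F = L_{j-1}S_j R_{j+1}$ for all $j$. Writing $S_{2i-1} = \mathrm{Id}+s_{2i-1}\sigma_+$ and $S_{2i}=\mathrm{Id}+s_{2i}\sigma_-$, one has $\partial_{s_j} F = L_{j-1}\sigma_{\epsilon_j}R_{j+1}$ (with $\sigma_{\epsilon_j}\in\{\sigma_\pm\}$ alternating by parity of $j$) and $\lambda\,\partial_\lambda F = F\sigma_3$. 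The crucial identity
\[
s_j\,\partial_{s_j}F \;=\; L_{j-1}(S_j-\mathrm{Id})R_{j+1} \;=\; F - L_{j-1}R_{j+1}
\]
turns each quadratic piece $s_\ell s_j\,\partial_{s_j}F$ into a difference between $F$ and a ``punctured'' product in which the $j$-th factor is deleted.

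For (1), fix $\ell$, expand $\{s_\ell, F\}_{FN}$ by Leibniz using \eqref{eq:PB_K fl newell} (taking care of antisymmetry for $j<\ell$), and separate the sum into four pieces: (a) the two Kronecker neighbours $\pm\partial_{s_{\ell\pm 1}} F$; (b) the anomalous $\pm\lambda^{-2}\partial_{s_{2K+2}}F$ or $\pm\lambda^{-2}\partial_{s_1}F$, which arise only if $\ell\in\{1,2K+2\}$; (c) the quadratic sum $s_\ell\sum_{j\ne\ell}c_{\ell,j}(F - L_{j-1}R_{j+1})$ with alternating signs $c_{\ell,j}=\pm 1$; and (d) the $\lambda$ term $(-1)^\ell s_\ell F\sigma_3$. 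Rewriting the punctured products as $L_{j-1}S_{j+1}R_{j+2}$ causes neighbouring terms in (c) to telescope pairwise, and the surviving boundary terms combine with the Kronecker neighbours in (a) and with (d) to assemble exactly into the commutators on the right-hand sides, namely $(-1)^\ell[F,\sigma_3]$ in the generic case $2\le\ell\le 2K+1$, while the anomalous term (b) contributes the additional $\lambda^{-2}[\sigma_+,F]$ or $[\sigma_-,F]$ summand at the endpoints.

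For (2), each right--hand side in (1) is a sum of commutators $[X,F]$ with $X\in\{\sigma_3,\sigma_\pm\}$, so taking the trace yields $\{s_\ell, \Tr F\}_{FN}=0$ and $\{\lambda,\Tr F\}_{FN}=0$; thus $\mathfrak{C}=\Tr F$ is Casimir. Uniqueness follows by a rank count: the ambient space has dimension $2K+3$ and by Theorem~\ref{thmMalgrange} the form $\mathcal W_K$ is symplectic on the $2K$--dimensional Poisson subvariety $\mathfrak{S}_K = \{F=\mathrm{Id}\}$ (cut out by the three independent entries of $F-\mathrm{Id}$, since one relation is absorbed by $\det F = 1$). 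Hence the Poisson rank at generic points is $2K+2$, the centre is one--dimensional, and must be generated by $\Tr F$.

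For (3), the defining ideal $\mathcal I$ of $\mathfrak{S}_K$ is generated by the entries of $F-\mathrm{Id}$. Every right--hand side in (1) has the form $[X,F] = [X, F-\mathrm{Id}]$, so its matrix entries are $\mathbb C$--linear combinations of entries of $F-\mathrm{Id}$, hence belong to $\mathcal I$; therefore $\mathcal I$ is a Poisson ideal and $\mathfrak{S}_K$ is a Poisson subvariety. The main obstacle is the bookkeeping in the telescoping step of (1), which must be checked separately for the three cases $\ell=1$, $2\le\ell\le 2K+1$, and $\ell=2K+2$ because the anomalous $\lambda^{-2}$ term breaks the uniform pattern at the endpoints; but the structural identities above reduce this to a finite combinatorial cancellation.
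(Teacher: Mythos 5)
Your parts (2) and (3) are correct and in fact supply details the paper omits: the right-hand sides of \eqref{ideal} are commutators, so $\Tr\{s_\ell,F\}=0$ gives the Casimir, and $[X,F]=[X,F-\Id]$ has entries that are linear in the entries of $F-\Id$, so the ideal of $\mathfrak S_K$ is a Poisson ideal. The puncturing identity $s_j\,\partial_{s_j}F=F-L_{j-1}R_{j+1}$ is also correct and is a reasonable way to organize the Leibniz expansion, which is the same overall strategy as the paper's Appendix~\ref{Appideal}.

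The gap is in the central cancellation step of part (1). Consecutive punctured products do \emph{not} telescope: writing $P_j=L_{j-1}R_{j+1}=L_{j-1}S_{j+1}R_{j+2}$ and $P_{j+1}=L_jR_{j+2}=L_{j-1}S_jR_{j+2}$, an alternating pair contributes
\begin{equation*}
\pm\,L_{j-1}\bigl(S_{j+1}-S_j\bigr)R_{j+2}=\pm\,L_{j-1}\bigl(s_{j+1}\s_{\mp}-s_j\s_{\pm}\bigr)R_{j+2},
\end{equation*}
which is a nonzero residue for \emph{every} pair, not just at the boundary; moreover these insertions sit between the wrong partial products to be recognized as $\partial_{s_k}F$. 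Assembling all of them into $[F,\s_3]$ and $[\s_\pm,F]$ is precisely the nontrivial part of the computation and requires the $2\times2$ identities the paper uses ($\s_-S=S\s_-=\s_-$ for $S$ lower unipotent, $[\s_-,\U{s}]=s\,\s_3$, etc.), none of which appear in your argument. A cleaner repair, and the route the paper actually takes, is to recognize the alternating quadratic sums as infinitesimal conjugation by ${\rm e}^{\epsilon\s_3/2}$ acting on a partial product, which yields in one stroke
\begin{equation*}
2\sum_{j=1}^{m}(-1)^{j}s_j\,\partial_{s_j}F=[\,S_{[1:m]},\s_3\,]\,S_{[m+1:2K+2]}\l^{\s_3},
\end{equation*}
after which only a short matrix manipulation remains. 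As written, your step (c) asserts a cancellation that does not occur, so part (1) is not established. (Separately, your uniqueness argument for the Casimir by rank count is acceptable, though it leans on the nondegeneracy of $\mathcal W_K$ proved elsewhere in the paper.)
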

We defer the proof to  Appendix \ref{Appideal}.

\begin{thm}
\label{thmmain}
The parametrization given in Lemma \ref{prop:1} for the Stokes parameters $s_j, j=1,\dots, 2K+2$ and the formal monodromy exponent $\lambda$ transforms the Poisson bracket \eqref{eq:PB_K fl newell} in the bracket \eqref{eq:poisson str}.
\end{thm}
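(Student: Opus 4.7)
The plan is direct verification: compute the Poisson brackets $\{s_j, s_l\}$ and $\{s_j, \lambda\}$ in the $y$-coordinates using the log-canonical bracket \eqref{eq:poisson str} with the tridiagonal matrix $\mathbf{P}_K$ of \eqref{eqPK}, and check that the results match the Flaschka--Newell formulas \eqref{eq:PB_K fl newell}. As Lemma \ref{prop:1} provides explicit birational formulas for the $s_j$ and $\lambda$, this reduces the theorem to a finite identity between rational functions in the $y_i$.

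The brackets involving $\lambda$ are the most immediate. Because $\mathbf{P}_K$ is tridiagonal and $\lambda = \prod_{j=1}^K y_{2j}^2$, one finds $\{y_1, \lambda\} = \tfrac{1}{2} y_1 \lambda$ while $\{y_i, \lambda\} = 0$ for every $i \ge 2$ (the two neighbouring contributions cancel when $i$ is odd, while even $i$ contributes nothing). Since every $s_j$ in Lemma \ref{prop:1} is homogeneous of weight $2(-1)^j$ in $y_1$, the Leibniz rule immediately yields $\{s_j, \lambda\} = (-1)^j s_j \lambda$, which is the second line of \eqref{eq:PB_K fl newell}.

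For the bilinear brackets $\{s_j, s_l\}$ I would use the monomial formula $\{y^\alpha, y^\beta\} = (\alpha^T \mathbf{P}_K \beta)\, y^{\alpha+\beta}$, where $\alpha^T \mathbf{P}_K \beta = \tfrac{1}{4}\sum_i(\alpha_i \beta_{i+1} - \alpha_{i+1}\beta_i)$ is an alternating, telescoping sum. Each $s_j$ with $2 \le j \le 2K$ factors as $\epsilon_j(1+y_j^2)\, y^{\alpha^{(j)}}$ where $\alpha^{(j)}$ alternates as $\pm 2$ on its support $\{1, \ldots, j\}$ and vanishes beyond; the edge cases $s_1$ and $s_{2K+1}$ are pure monomials. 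The common alternating pattern causes all interior contributions to $\alpha^{(j)T} \mathbf{P}_K \alpha^{(l)}$ to cancel in pairs, leaving only boundary terms at indices $j, j+1, l, l+1$; combined with the $(1+y_j^2)(1+y_l^2)$ correction factors via the Leibniz rule, these produce $(-1)^{j-l+1} s_j s_l$ in the generic case, plus the Kronecker contribution $\delta_{j, l-1}$ when $l = j+1$.

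The principal obstacle is the exceptional bracket $\{s_1, s_{2K+2}\}$, which must produce the anomalous $-1/\lambda^2$ term. Here I would exploit the nested structure $s_{2K+2} = y_1^2 \prod_{j=1}^K y_{2j}^{-4} \cdot Q$ with $Q = 1 + y_2^2(1 + y_3^2(\cdots(1+y_{2K}^2)\cdots))$, together with the key telescoping identity $y_2\, \partial_{y_2} Q = 2(Q-1)$. Since only $\{y_1, y_2\} = \tfrac{1}{4} y_1 y_2$ is nonzero among the relevant elementary brackets, the Leibniz rule reduces the computation to $\{s_1, s_{2K+2}\} = \tfrac{y_2}{2 y_1^2}\, \partial_{y_2} s_{2K+2}$, and the telescoping then collapses the nested expression cleanly to $s_1 s_{2K+2} - 1/\lambda^2$. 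The remaining mixed brackets $\{s_j, s_{2K+2}\}$ with $j > 1$ are handled by the same method but without the anomalous term, since $y_1$ no longer appears in both factors. An alternative, potentially cleaner, route is offered by Proposition \ref{propideal}: verifying the matrix identities \eqref{ideal} directly in the $y$-coordinates would collapse the $O(K^2)$ scalar checks into only four matrix equalities.
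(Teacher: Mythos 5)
Your proposal is correct and takes essentially the same route as the paper: a direct verification that the log-canonical bracket \eqref{eq:poisson str} reproduces \eqref{eq:PB_K fl newell} under the substitution of Lemma \ref{prop:1}, via the Leibniz rule and telescoping cancellations, with the anomalous $-1/\lambda^{2}$ isolated in the single bracket $\{s_1,s_{2K+2}\}$ (your homogeneity-in-$y_1$ argument for the $\lambda$-brackets and the pairing formula $\alpha^{T}\mathbf{P}_K\beta$ are a mild streamlining of the paper's case-by-case telescopic sums, and your identity $y_2\partial_{y_2}Q=2(Q-1)$ reproduces the paper's computation of the exceptional bracket). Two cosmetic slips: the nesting in $Q$ runs over the even indices $y_2,y_4,\dots,y_{2K}$ only (which does not affect the identity you use, since only the $y_2$-dependence enters), and the reason you give for the absence of an anomaly in $\{s_j,s_{2K+2}\}$ for $j>1$ is misstated --- $y_1^{\pm2}$ occurs in every $s_j$ --- so that case still requires the same explicit check that the paper likewise leaves to the reader.
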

\begin{proof}
We start by observing that the bracket \eqref{eq:poisson str} is such that all even-indexed variables commute amongst themselves, and so do the odd ones. 
We now verify that the bracket \eqref{eq:poisson str} yields the bracket \eqref{eq:PB_K fl newell} under the map \eqref{eq:param stokes param y}.
We will verify some of the brackets explicitly and leave the rest of the verification to the reader.
Let us start with the  case $\{s_{2k+1},\lambda\}$ for $k<K$: since $\lambda$ is a function of only the even variables it commutes with the even ones and we can write 
\begin{equation}
\{s_{2k+1} ,\lambda \} = -\prod_{j=1}^{k} y_{2j}^2  \left\{ \prod_{j=0}^{k-1} y_{2j+1}^{-2} + \prod_{j=0}^{k} y_{2j+1}^{-2} , \prod_{j=1}^{K} y_{2j}^2\right\}.
\end{equation}
This computation is easily done by passing to the logarithms of the variables $y_j$'s, in which the Poisson bracket \eqref{eq:poisson str} is constant: thus both terms inside the bracket are log--canonical.  Then one observes that the bracket above involves a telescopic sum and only the term $y_{1}$ yields a contribution and we obtain 
\begin{equation}
\{s_{2k+1} ,\lambda\} ={-} s_{2k+1}\lambda.
\end{equation}
The case $\{s_{2K+1},\lambda\}$ is handled similarly. 
Consider now an even variable $s_{2k}$ for $k<K$; since $\lambda$ is a function of only the even variables we can write 
\begin{equation}
\{ s_{2k}, \lambda\} = \frac{(1+y_{2k}^2)}{ \prod_{j=1}^{ k} y_{2j}^2 }\left\{  \prod_{j=0}^{k-1} y_{2j+1}^2 ,\prod_{j=1}^{K} y_{2j}^2\right\} =  s_{2k}\lambda,
\end{equation}
where we have used the same telescopic-sum argument. Again, the case $\{s_{2K+2},\lambda \}$ is handled similarly observing that $s_{2K+2} = y_1^2$ times a function of only even variables. 

Let us now consider the bracket $\{s_a, s_b\}$; suppose both $a=2k,b=2l$ are even. 
\begin{equation}
\label{114}
\begin{aligned}
\left\{
\frac{(1+ y_{2k}^2)}
{\prod_{j=1}^k y_{2j}^2}  {\prod_{j=1}^{k}y_{2j-1}^2}, 
\frac{(1+ y_{2l}^2)}
{\prod_{j=1}^l y_{2j}^2}  {\prod_{j=1}^{l}y_{2j-1}^2}\right\}
&=\frac{(1+ y_{2k}^2)}
{\prod_{j=1}^k y_{2j}^2}\left\{
  {\prod_{j=1}^{k}y_{2j-1}^2}, 
\frac{(1+ y_{2l}^2)}
{\prod_{j=1}^l y_{2j}^2} \right\} {\prod_{j=1}^{l}y_{2j-1}^2} 
\\ &+
{\prod_{j=1}^{k}y_{2j-1}^2}\left\{
\frac{(1+ y_{2k}^2)}
{\prod_{j=1}^k y_{2j}^2}  , 
 {\prod_{j=1}^{l}y_{2j-1}^2}\right\}\frac{(1+ y_{2l}^2)}
{\prod_{j=1}^l y_{2j}^2} 
\end{aligned}
\end{equation}
The computation relies on the following simple observation, which can be used for both terms by interchanging the roles of $k$ and $l$:
\begin{equation}
\left\{
  {\prod_{j=1}^{k}y_{2j-1}^2}, 
\frac{1}
{\prod_{j=1}^l y_{2j}^2} \right\}  = 
\begin{cases}
{-}\frac{{\prod_{j=1}^{k}y_{2j-1}^2} }
{\prod_{j=1}^l y_{2j}^2}   &  k \leq  l \\[30pt]
 0 & k> l.
 \end{cases}.
 \label{115}
\end{equation}
Now let $k\leq l-1$: then the second bracket in \eqref{114} is zero and the first yields back $s_{2k} s_{2l}$ which is consistent with \eqref{eq:PB_K fl newell}.
The odd-odd case is similarly handled.\\
We still have to check the case even-odd. For that, consider the case $\{s_{2k}, s_{2l+1}\}$ for $k\leq l$:
\begin{equation}
\begin{aligned}
&
\left\{
\frac{(1+ y_{2k}^2)}
{\prod_{j=1}^k y_{2j}^2}  {\prod_{j=1}^{k}y_{2j-1}^2}, 
- \frac{(1+ y_{2l+1}^2)}
{\prod_{j=0}^l y_{2j+1}^2}  {\prod_{j=1}^{l}y_{2j}^2}\right\}
=\\
&=-
\frac{(1+ y_{2k}^2)}
{\prod_{j=1}^k y_{2j}^2} \left\{
 {\prod_{j=1}^{k}y_{2j-1}^2}, 
{\prod_{j=1}^{l}y_{2j}^2}\right\} \frac{(1+ y_{2l+1}^2)}
{\prod_{j=1}^l y_{2j+1}^2}  
-
{\prod_{j=1}^{k}y_{2j-1}^2}\left\{
\frac{(1+ y_{2k}^2)}
{\prod_{j=1}^k y_{2j}^2}  , 
 \frac{(1+ y_{2l+1}^2)}
{\prod_{j=0}^l y_{2j+1}^2}\right\}  {\prod_{j=1}^{l}y_{2j}^2}
\label{117}
\end{aligned}
\end{equation}
The first bracket in \eqref{117} gives $  
 {\prod_{j=1}^{k}y_{2j-1}^2} 
{\prod_{j=1}^{l}y_{2j}^2}$ 
and hence 
\begin{equation}
\{s_{2k}, s_{2l+1}\} = s_{2k} s_{2l+1} -
{\prod_{j=1}^{k}y_{2j-1}^2}\left\{
\frac{(1+ y_{2k}^2)}
{\prod_{j=1}^k y_{2j}^2}  , 
 \frac{(1+ y_{2l+1}^2)}
{\prod_{j=0}^l y_{2j+1}^2}\right\}  {\prod_{j=1}^{l}y_{2j}^2}
\label{118}
\end{equation}
The several contributions in \eqref{118} can all be accounted for by the formula \eqref{115}: if $l \geq k+1$ then one sees immediately that all terms in the bracket in \eqref{118} vanish. The only case when the bracket gives a nonzero contribution is for $k=l$:
\begin{equation}
\left\{
\frac{(1+ y_{2k}^2)}
{\prod_{j=1}^k y_{2j}^2}  , 
 \frac{(1+ y_{2k+1}^2)}
{\prod_{j=0}^k y_{2j+1}^2}\right\} =\left\{
\frac{1}
{\prod_{j=1}^k y_{2j}^2}  , 
 \frac{1}
{\prod_{j=0}^{k-1} y_{2j+1}^2}\right\}  
={-}
\frac 1{\prod_{j=1}^k y_{2j}^2}  
 \frac{1}
{\prod_{j=0}^{k-1} y_{2j+1}^2} .
\end{equation}
Combining this with \eqref{118} gives finally 
\begin{equation}
\{s_{2k} , s_{2l+1}\} = 1 {+} s_{2k} s_{2l+1}
\end{equation}
To complete the verification remains only to check the case 
\begin{equation}
\begin{aligned}
\{s_1&, s_{2K + 2}\} = \left\{ -y_1^{-2},\sum_{l=1}^{K} \frac{y_1^2}{\prod_{j=1}^{K} y_{2j}^{2} \prod_{j=l}^K y_{2j}^2}   \right\}
=
-\sum_{l=1}^Ky_1^2 \left\{ y_1^{-2}, \frac{1}{\prod_{j=1}^{K} y_{2j}^{2} \prod_{j=l}^K y_{2j}^2}   \right\}
=
\\
&=
-\sum_{l=1}^Ky_1^2 \left\{ y_1^{-2}, \frac{1}{\prod_{j=1}^{K} y_{2j}^{2} }   \right\} \frac 1{\prod_{j=l}^K y_{2j}^2} -\sum_{l=1}^Ky_1^2 \left\{ y_1^{-2}, \frac{1}{\prod_{j=l}^{K} y_{2j}^{2} }   \right\} \frac 1{\prod_{j=1}^K y_{2j}^2}.
\end{aligned}
\end{equation}
In the second sum only the term $l=1$ contributes  and the result of this is $\frac 1 {\lambda^2}$; the first sum instead contributes $-s_1 s_{2K+2}$ and in total we find 
\begin{equation}
\{s_1, s_{2K + 2}\} = {-}\frac 1{\lambda^2} {+}  s_1 s_{2K+2}.
\end{equation}
The verification is thus complete.
\end{proof}
%

\section{Log canonical coordinates for the Ugaglia bracket}
\label{secUgaglia}
In this section we show, without detailed proofs, how to construct the log--canonical coordinates for the Ugaglia bracket \cite{Ugaglia} using the same idea exploited in the first half of the paper. 

We remind that this is a Poisson bracket on the Stokes' manifold for the following ODE:
\be
\label{Frobenius}
\Psi'(z) = \le( U + \frac V z\ri) \Psi(z), \ \ U\in \CSA, \ \ V=-V^t \in so(n). 
\ee
For simplicity of exposition assume $U= {\rm diag} (u_1,\dots, u_n)$ such that $\Re u_j<\Re u_{j+1}$ so that the Stokes' rays can be chosen as $\mathbb R_\pm$, which we take oriented towards $\infty$. 

Because of the symmetry, the Stokes' matrix, $S = S_+$,  on $\R_+$  is upper triangular, with unit on the diagonal and  on $\R_-$ the Stokes' matrix is $S_- = S_+^{-t}$. 
The formal asymptotic of $\Psi$ has vanishing exponents of formal monodromy;
\be
\Psi_{form} = \wh Y_\infty(z) {\rm e}^{z U}, \ \ \ \wh Y_\infty(z) = \Id + \sum_{j\geq 1} \frac {Y_j} {z^j},\ \ \ z\to \infty.
\ee
The equation \eqref{Frobenius} has a Fuchsian singularity at $z=0$ and the monodromy matrix  is $M = S^{-t} S$. If $L$ denotes the diagonal matrix of eigenvalues of the matrix $V$ in \eqref{Frobenius}, then  $M = C {\rm e}^{2i\pi L} C^{-1}$, for some matrix $C\in SL_n(\C)$ called the {\em connection matrix}. 
Then one has a solution defined in the universal cover of a punctured disk around the origin of the form 
\be
\Psi_0(z) = G_0\wh Y_0(z) z^{L} C^{-1}, \ \ \ \ \wh Y_0(z) = \Id +\sum_{j\geq 1} H_j z^j. 
\ee
where $L =  G_0^{-1} V G_0$ and the series $\wh Y_0(z)$ has infinite radius of convergence.
The Ugaglia Poisson bracket is given by the following set of equations (we change the normalization  relative to loc. cit. so that the Ugaglia Poisson bracket is this Poisson bracket multiplied by $i\pi/2$)\footnote{In the formula (3.2f) of the  published paper \cite{Ugaglia}  the last is index is printed as $kj$ but it is clearly a typographical error.}
\be
\label{UgP}
&\{s_{ik} , s_{i\ell}\}_{_{U}} = 2s_{k\ell} - s_{ik}s_{i\ell} & i<k<\ell\\
&\{s_{ik} , s_{jk}\}_{_{U}} = 2s_{ij} - s_{ik}s_{jk} & i<j<k\\
&\{s_{ik} , s_{k\ell}\}_{_{U}} =  s_{ik}s_{k\ell} -2s_{i\ell}  & i<k<\ell\\
&\{s_{ik} , s_{j\ell}\}_{_{U}} = 0 & i<k<j<\ell\\
&\{s_{ik} , s_{j\ell}\}_{_{U}} =0& i<j<\ell<k\\
&\{s_{ik} , s_{j\ell}\}_{_{U}} = 2(s_{ij}s_{k\ell} - s_{i\ell} s_{jk})& i<j<k<\ell.
\ee
As written, its Casimirs are the exponents of the eigenvalues of monodromy, namely $L$; notice that there are $\lfloor\frac n 2 \rfloor$ independent such eigenvalues because $V=-V^t$ and hence if $\l$ is an eigenvalue of $V$ then so is $-\l$.
\begin{figure}

\begin{center}
\begin{tikzpicture}[scale=1.85]
\coordinate (pr) at (1.5,0);
\coordinate (pl) at (-1.5,0);
\coordinate (nu) at (0,1.4);
\coordinate (nd) at (0,-1.2);
\coordinate (t) at (-0.6,0.6);
\coordinate (b) at (t);
\draw [directed] (pr) to  node[above] {$S$} (3,0);
\draw [directed] (pl) to node [above] {$S^{-t}$} (-3,0);
\draw [directed] (pr) to  [out=120, in=0]node[above, pos=1] {$Q$} (0,1.5) to [out=180, in=60] (pl);
\draw [directed] (pl) to  [out=-60, in=180]node[below, pos=1] {$Q^{-t}$} (0,-1.5) to [out=0, in=-120] (pr);
\draw [red, directed] (pr) to node[above, sloped] {$A_1$} (nu);
\draw [red, directed] (pl) to node[above, sloped] {$A_3$}(nu);
\draw [red, directed] (t) to node[right] {$A_2$} (nu);
\draw [red, directed] (pr) to  node[below, sloped] {$A_3^{-t}$}(nd);
\draw [red, directed] (pl) to node[below, sloped] {$A_1^{-t}$}(nd);
\draw [red, directed] (b) to  node[right, pos=0.7] {$A_2^{-t}$} (nd);
\draw[directed] (pr) to node [above] {$D$} (t);
\draw[directed] (pl) to node[below, pos=0.6] {$D^{-1}$} (t);
\draw[directed] (135:0.3) to node[below] {$M_0$} (t);
\draw[blue, directed] (135:0.3) to node[above, pos = 0.8] {$\Lambda$} (135:0);
\draw[directed] circle  [radius=0.3];
\node at (-45:0.4) {$C_0$};
\node [above right] at (t) {\tiny $s$};
\node [below] at (135:0.3) {\tiny $\beta$};

\node [below] at (nu) {\tiny $f_0$};
\node [below] at (nd) {\tiny $f_1$};
\node [below right] at (pr) {\tiny $q_0$};
\node [below] at (pl) {\tiny $q_1$};
\end{tikzpicture}
\end{center}

\caption{ The Stokes' graph $\Sigma$ for the Ugaglia bracket \eqref{UgP}.
}
\label{figSigmaUg}

\end{figure}
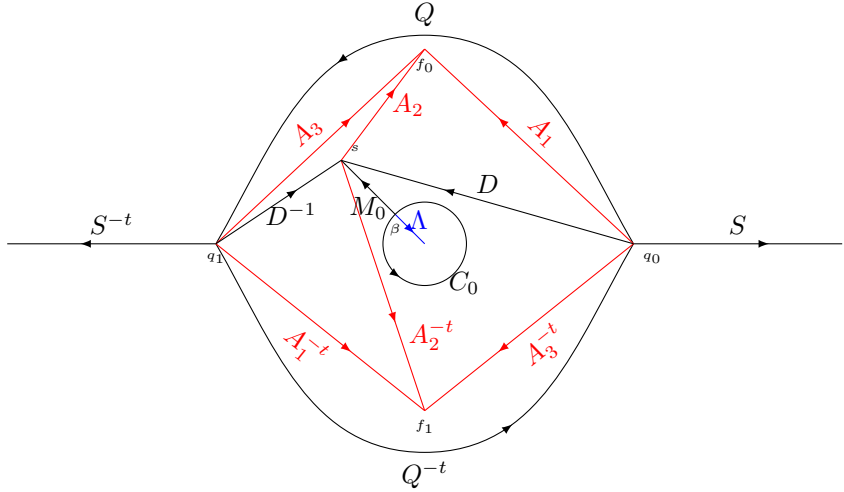
 \paragraph{Jump matrices on the graph $\Sigma$.}
 Let us denote
\be
D = & {\rm diag} \le(z_1,\frac {z_2}{z_1}, \dots, \frac {z_n}{z_{n-1}}, \frac 1 {z_n}\ri)\\
A_1 =& A(\{x_{abc}\}), \ \ A_2 = A(\{x_{bca}\}),\ \ A_3 = A(\{x_{cab}\}),\ \
\\
Q =&  {\rm diag} \le(\sqrt{q_1},\sqrt{\frac {q_2}{q_1}}, \dots, \sqrt{\frac {q_n}{q_{n-1}}}, \frac 1 {\sqrt{q_n}}\ri) P
\ee
and $P_{ab} = (-1)^a \delta_{a,b+n-1}$. The matrix $A(\bs x)$ is described below.
It is convenient to introduce the notations $\alpha_i$, $i=1,\dots, n-1$   for  the simple positive roots of $SL(n)$ and  by  ${\mathrm h}_i$ the dual roots: 
\be
\alpha_i:= {\rm diag}( 0,\dots, \!\!\!\mathop{1}^{i-pos}\!\!\!,-1,0,\dots),\qquad
{\mathrm h}_i := \le(
\begin{array}{cc}
(n-i)\Id_i & 0\\
0& -i\Id_{n-i}
\end{array}
\ri)\;,
\cr
\tr (\alpha_i {\mathrm h}_k) = n \delta_{ik}\;.
\label{rootprods}
\ee
We also denote $M^\star:= PMP^{-1}$ for any matrix $M$.

Following \cite{Bertola:2019ws} we construct the matrices $A_1,A_2,A_3$ depending on $(n-1)(n-2)/2$ variables that we indicize by triple of positive integers adding to $n$: $\bs x = \{x_{abc}, \ a+b+c=n\}$. Let $\mathbb E_{ik}$ be the elementary matrix and define
$$
F_i  = \Id + \mathbb E_{i+1,i}\; ,\ \ \ i=1,\dots, n-1\;,
$$
$$
N_k= \le(\prod_{k\leq  i \leq n-2} 
x_{n-i-1, i-k+1, k}^{-\mathrm h_{i+1}}
F_i\ri) F_{n-1}\; .
$$
Then the matrix $A_1$ is defined as follows \cite{FG}  (here $\sigma  = {\rm diag}(1,-1,1,\dots)$)
\be
A_1({\bs x}) =\sigma\;   \le(\prod_{k=n-1}^1 N_k\ri)\; P\;.
\label{defA1}
\ee
The matrices $A_2$ and $A_3$ are obtained from $A_1$ by cyclically permuting the indices of the variables: 
\be
A_2({\bs x}) = A_1(\{x_{bca}\})\;,\hskip0.7cm
A_3({\bs x}) = A_1(\{x_{cab}\})\;.
\label{defA23}
\ee
The important  property of  the matrices $A_i$ is the equality 
\be
\label{A123}
A_1 A_2 A_3
=\Id\; .
\ee
The anti-diagonal matrix $Q$  is uniquely determined by the requirement that $S$ has ones on the diagonal and it turns out that 
\be
Q = (-1)^{\frac {n+1}2} \prod_{a+b+c=n} x_{abc}^{\frac 1 2(\mathrm h_c^\star + \mathrm h_a)} \prod_{j=1}^{n-1} z_j^{\frac{\alpha_j}2} P.
\ee 
The local monodromy around $z=0$ is 
\be
M_0 = D^{-1} A_2 D A_2^{-t}
\ee
and it is easily seen to be a lower triangular matrix; the diagonal elements of $M_0$ are the diagonal matrix $\Lambda$ and  the eigenvalues of the monodromy. The matrix $C_0$ is the lower--triangular matrix that diagonalizes the local monodromy $M_0$. 
Note that the total monodromy $M$ is 
\be
M = S S^{-t} = Q^{-t} A_3^{-t} M_0 A_3^t Q^t.
\ee
The diagonal elements, $m_1, \dots, m_n$  of  the triangular matrix $M_0$ (eigenvalues of $M$) satisfy the symmetry $m_j =\frac 1{ m_{n+1-j}}$. Correspondingly we impose that the diagonal elements $c_{1}, \dots, c_{n}$ of  $C_0$  satisfy the same symmetry; we call these the {\it toric variables} following similar usage in \cite{Bertola:2019ws}.
The two--form is symplectic because, like in the main body of the paper, it represents the exterior derivative of the Kostant-Kirillov Lie--Poisson symplectic potential. Note that there is no contribution coming from $z=\infty$ because the eigenvector matrix  $G(z) = \Id + Y_1/z + \mathcal O(z^{-2}) $ can be chosen to satisfy $G(-z)= G(z)^{-t}$ so that $Y_1$ is diagonal  free. 
Thus the KK form is simply
\be
\theta = \Tr (\wh L G_0^{-1} \delta G_0) = \Tr ( V \delta G_0 G_0^{-1}), \ \  G_0\in SO(n).
\ee
where $\wh L $  is in the Cartan subalgebra of $so(n)$ and is represented by a skew--symmetric matrix with the same eigenvalues as the diagonal matrix $L$, and $V= G_0 \wh L G_0^{-1}$. 

This extended symplectic structure is related to Ugaglia's Poisson structure as follows: the functions $\mu_j$ generate a torus Hamiltonian action that shifts the log-toric variables $\gamma_j$. Then the Hamiltonian reduction with respect this toric action (i.e. quotienting out the toric variables)  is precisely Ugaglia's bracket. 

\paragraph{Explicit expression in log-canonical coordinates.}
Denote by  $\xi_{abc}, \zeta_j, \mu_j, \gamma_j$  the logarithms of $x_{abc}, z_j, m_j, c_j$, respectively.
The matrix of eigenvalues $\Lambda = {\rm diag}(m_1,\dots, m_n)$ is given by 
\be
\label{evals}
\Lambda = (-1)^{n+1} \prod_{j=1}^{n-1} z_j^{\alpha_j  -  \alpha_j^\star} 
\prod_{a+b+c=n} x_{abc}^{ \mathrm h_b - \mathrm h_b^\star}. 
\ee
To be noted that $m_j = \frac 1{m_{n+1-j}}$.
The Casimirs of the Ugaglia bracket are $m_1,\dots, m_{\lfloor n/2\rfloor}$ (but are not Casimirs of the extended symplectic form).

In the form $\Omega(\Sigma)$ the two vertices with the matrices $A_k$ contribute the same term (See Prop. 4.1 in \cite{Bertola:2019ws})
\be
\omega_f  =\sum_{i + j + k = n\atop 
i' + j' + k' = n}
F_{ijk; i'j'k'} \,\, \d \xi_{ijk}    \wedge \d \xi_{i'j'k'}
\ee
where $F_{ijk; i'j'k'}$ are the following integers
\be
 & F_{ijk;i'j'l'}=    
(\CM_{i,n-j'} - \CM_{i',n-j}    ) H(\Delta i \Delta j)
  +  (\CM_{j,n-k'} - \CM_{j',n-k}   ) H(\Delta j \Delta k)+
 (\CM_{k,n-i'} -\CM_{k',n-i}   ) H(\Delta k \Delta i)
\cr
&\Delta i= i'-i\;, \hskip0.7cm
\Delta j= j'-j\;,  \hskip0.7cm \Delta k= k'-k\;, \ \ 
\CM_{jk}:= \tr(\mathrm h_j \mathrm h_k)=n^2\left({\rm min}(j,k)-\frac {jk}{n}\right)
\label{defCM}
\ee
The contribution of each of the other vertices is straightforward because in each of the summands in \eqref{eq:2formstandard} the matrix one--forms being wedged are of the same triangularity and hence only the diagonal parts matter.
Then we have 
\be
\label{omegaUgaglia} 
\Omega(\Sigma) = 2\omega_f + 2 \omega_{q_0} + \omega_\beta + \omega_s
\ee
where the subscript indicates the contribution due to each of the vertices of $\Sigma$ in the graph shown in Fig. \ref{figSigmaUg}. They are:
\be
\omega_{q_0} =& \sum_{j=1}^{n-1} \sum_{a+b+c=n} \tr\Big(\alpha_j(\mathrm h_a + \mathrm h_c^\star) \Big)\d\zeta_j \wedge \d \xi_{abc}
+\cr
&+\frac{1} 2
\sum_{a+b+c=n\atop a'+b'+c'=n}
\tr \Big(
\mathrm h_c^\star \mathrm h_{a'}-\mathrm h_a \mathrm h_{c'}^\star
\Big)
\d\xi_{abc}\wedge \d \xi_{a'b'c'}
\\
\omega_\beta =& 4\sum_{j=1}^{\lfloor n/2 \rfloor} \d \gamma_j \wedge \d\mu_j
\\
\omega_s =&  2 \sum_{j=1}^{n-1} \sum_{a+b+c=n} \tr \Big(\alpha_j \mathrm h_b^\star \Big) \d \xi_{abc} \wedge \d \zeta_j .
\ee
The factors of $2$ in \eqref{omegaUgaglia}  are due to the fact that, thanks to the symmetry, the vertices $q_0,q_1$ and $f_0,f_1$  give the same contributions.

Denoting by $\{,\}_\Sigma$ the Poisson bracket induced by the form $\Omega(\Sigma)$, we find that 
\be
\{,\}_{\Sigma}  = -8 \{, \}_{_{U}}.
\ee

\section{Conclusion and outlook}

We conclude with a brief discussion of how to generalize the construction of the log--canonical structure  to $SL_n$.
If we consider a polynomial  matrix $A(z)\in sl_n$, then we can still use the same triangulation of the $(2K+2)$ polygon like in Fig. \ref{fig:mod graph form monodromy}. On the perimetric edges the matrices $V$ now are replaced by anti-diagonal matrices (denoted by $S$ in \cite{Bertola:2019ws}, formula (4.4) and following), namely, elements of the Cartan torus times the long permutation in the Weyl group. The only essential difference is that the matrix $A$ which was a constant for the $SL_2$ case, now depends on $(n-1)(n-2)/2$ parameters $x_{abc}$ as in \eqref{defA1}, \eqref{defA23} (but each triangle will have its own set of variables $\bs x$). 

The parameters need to be chosen appropriately so that the Stokes' matrices have unit entries on the diagonal, but the principle is the same as the one we have followed in this paper. 
The details are technical and deferred to the future.

We also remark that the Ugaglia bracket described in Sec. \ref{secUgaglia} is a particular case of the so--called Bondal groupoid \cite{Bondal}; a direct approach expressing these log-canonical coordinates can be found in \cite{Chekhov} but not based directly on the relationship with the Stokes' phenomenon.

Finally we comment on the proper hierarchy for Painlev\'e\ II; this would correspond to $sl_2$ matrices $A(z)$ with a polynomial part of degree $K$ and a simple pole at the origin  satisfying the additional symmetry
\be
A(z) = -\s_1 A(-z) \s_1.
\ee
Handling this situation requires first to extend the construction of the log--canonical coordinate to include the presence of a Fuchsian singularity and hence to generalize the approach to a more general monodromy manifold that extends \eqref{stokesmanifoldintro}.
Second, the symmetry requires a reduction of our description to particular submanifolds of the cluster variety. In particular we would consider only triangulations with an appropriate $\mathbb Z_2$ symmetry. 
These issues are the object of future publications.\\[10pt]

\noindent {\bf Acknowledgements.}  We thank  M. Gekhtman for discussions and comments.  The work of M. B. was supported in part by the Natural Sciences and Engineering Research Council of Canada (NSERC) grant RGPIN-2016-06660. The work of S.T. was supported by the European Union Horizon 2020 research and inovation program under the Marie Sklodowska-Curie RISE 2017 grant agreement no. 778010 IPaDEGAN.

\appendix
\renewcommand{\theequation}{\Alph{section}.\arabic{equation}}
\section {Proof of Prop. \ref{propideal}}
\label{Appideal}
In this appendix we will omit the subscript $_{_{FN}}$ for the Poisson bracket \eqref{eq:PB_K fl newell}. 
The second and third statements follow from the equations \eqref{ideal}.
 The matrix entries of $ F = F_K$ satisfy the homogeneity;
\be
\le\{F_K, \l\ri\}=
\underbrace{\le(\sum_{j=1}^{2K+2}  (-1)^j s_j \frac \pa{\pa s_j} \ri)}_{\mathbb E}F_K =\frac 1 2[F_K,\s_3]
\label{homog}
\ee
which is also the last of the formul\ae. This is easily shown by acting with the group of toric action of conjugations by ${{\rm e}^{\frac \epsilon 2 \s_3}}$, whereas the first equality  in \eqref{homog} follows from the definition of the Poisson bracket \eqref{PB_K}.

For the purpose of the rest of this proof we denote the product of consecutive factors in $F$ as $S_{[a:b]} = S_a\cdots S_b$ with the notation $ U_{2a+1}=S_{2a+1} $ and $ L_{2a} = S_{2a}$.
We have
 \be
\{s_1,F\} =&\le( (1+s_1s_2)\pa_2 +s_1 \sum_{j\geq 3}^{2K+2} (-1)^j \pa_j   
- \frac 1{\l^2} \pa_{2K+2}
- s_1\l\pa_\l\ri) F 
=\nn\\
=&\le(s_1 \mathbb E- s_1\l\pa_\l + s_1^2 \pa_1 + \pa_2 - \frac 1{\l^2} \pa_{2K+2}\ri)F = 
\nn\\
=&
\frac {s_1}  2 [F,\s_3] -   s_1 F\s_3  +
\bigg(\U{s_1}\s_-+s_1^2 \s_+ \L{s_2}  \bigg)
 S_{[3:2K+2]}\l^{\s_3}
-\frac {1} {\l^2} S_{[1:2k+1]} \s_-\l^{\s_3}.
\ee
We now use that $\s_-L_2 = L_2\s_-=\s_-$ and $[\s_-,U_1]=s_1\s_3$ so that we can continue:
\be
\frac {s_1}  2 & [F,\s_3] -   s_1 F\s_3  +
\bigg(\s_-U_1 +\underbrace{s_1\s_3+s_1^2 \s_+ }_{=s_1\s_3U_1}  \bigg)
 S_{[2:2K+2]}\l^{\s_3}
-\underbrace{\frac {1} {\l^2} S_{[1:2K+2]} \s_-\l^{\s_3}}_{=F\s_-}=
\\
=& \frac {s_1}  2 [F,\s_3] -   s_1 F\s_3  + \le[
\begin{array}{cc}
s_1&0\\ 1 &-s_1
\end{array}
\ri]F -F \s_-  = \frac {s_1}2 [\s_3,F] +[ \s_-,F] .
\ee
As for the remaining equations, consider $\{s_j, F\}$:
\be
(-1)^{\ell-1} \{s_\ell, F\} = s_\ell\le (-\sum_{j=1}^{\ell-1} (-1)^{j}  s_j \pa_j 
+ \sum_{j=\ell+1}^{2K+2} (-1)^{j} s_j \pa_j -  \l \pa_\l + (-1)^{\ell} \pa_{\ell+1} -(-1)^\ell \pa_{\ell-1}\ri)F
=\\
=
 s_\ell\le (\mathbb E  
 + (-1)^\ell s_\ell^2\pa_\ell
-2 \sum_{j=1}^{\ell-1} (-1)^{j} s_j \pa_j - \l \pa_\l + (-1)^\ell \pa_{\ell+1} -(-1)^\ell  \pa_{\ell-1}\ri)F
=\\
=
\frac{ s_\ell}2[F,\s_3]   -s_\ell F\s_3
-2s_\ell \underbrace{\sum_{j=1}^{\ell-1} (-1)^{j} s_j \pa_j }_{\mathbb E_{\ell-2}} F +(-1)^\ell  \pa_{\ell+1} F- (-1)^\ell \pa_{\ell-1} F +(-1)^\ell s_\ell^2 \pa_\ell F
\label{23}
\ee
Observe that the vector field $\mathbb E_\ell$ is the generator of the toric action of conjugations by  $\epsilon^{\frac {\s_3}2}$ on the first $\ell-1$ factors, and hence its effect is the commutator with $\frac {\s_3}2$; 
\be
 2\sum_{j=1}^{\ell-1} (-1)^{j} s_j \pa_j  F = [ S_{[1:\ell-1]} ,\s_3] S_{[\ell: 2K+2]} \l^{\s_3}
 \ee
 Thus we have (we assume $\ell$ to be even, for definiteness, the case of $\ell$ odd being handled similarly)
 \be
& \eqref{23} = \frac{ s_\ell}2[F,\s_3]   -s_\ell F\s_3 + s_\ell
[\s_3, S_{[1:\ell-1]}]S_{[\ell:2K+2]} \l^{\s_3} 
- \pa_{\ell+1} F+ \pa_{\ell-1} F +(-1)^\ell s_\ell^2 \pa_\ell F
=\\
&=
-\frac{ s_\ell}2[F,\s_3]   -  s_\ell
 S_{[1:\ell-1]}\s_3 S_{[\ell:2K+2]} \l^{\s_3} 
+
S_{[1:\ell-2]} \le(
\L{s_{\ell-1}} \U{s_\ell} \s_- 
\nn\ri.
\\
&\ \ \ \ \le.-\s_- \U{s_\ell}\L{s_{\ell+1}} 
+ s_{\ell}^2 \L{s_{\ell-1}} \s_+ \L{s_{\ell+1}}
\ri) S_{[\ell+2:2K+2]}\l^{\s_3}
 \ee
 A direct computation shows that the matrix in the bracket can be rewritten as 
 \be
s_\ell \L{s_{\ell-1}}s_3 \U{s_\ell}\L{s_{\ell+1}}
 \ee
 and hence we finally find 
 \be
(-1)^{\ell+1} \{s_\ell, F\} = -\frac{ s_\ell}2[F,\s_3]  ,\ \ 2\leq \ell \leq 2k+1
 \ee
 A similar computation shows that (note that $\s_+U=U\s_+=\s_+$)
 \be
 \{s_{2K+2}, F\}
  =& s_{2K+2}\le (\sum_{j=1}^{2K+1} (-1)^{j}  s_j \pa_j 
 +  \l \pa_\l -  \pa_{2K+1} +\frac 1{\l^2} \pa_{1}\ri)F
=\\
 =& s_{2K+2}\le (\mathbb E 
 +  \l \pa_\l -s_{2K+2}\pa_{2K+2}-  \pa_{2K+1} +\frac 1{\l^2} \pa_{1}\ri)F
=\\
=&\frac{s_{2K+2} }{2}[F,\s_3] + s_{2K+2} F\s_3- s_{2K+2}^2 S_{[1:2K+1]}\s_-\l^{\s_3}
-S_{[1:2K]}\s_+L_{2K+2}\l^{\s_3} + \frac 1{\l^2} \s_+S_{[2:2K+2]}\l^{\s_3}
=\\
=&\frac{s_{2K+2} }{2}[F,\s_3] + s_{2K+2} F\s_3- s_{2K+2}^2 S_{[1:2K+1]}\s_-\l^{\s_3}
-S_{[1:2K+1]}\s_+L_{2K+2}\l^{\s_3} + \frac 1{\l^2} \s_+F
=\\
=&\frac{s_{2K+2} }{2}[F,\s_3] + s_{2K+2} F\s_3
-S_{[1:2K+1]}\underbrace{\bigg(\s_+L_{2K+2} + s_{2K+2}^2 \s_-\bigg)}_{= L_{2K+2}(s_{2K+2}\s_3+\s_+)}\l^{\s_3} + \frac 1{\l^2} \s_+F
=\\
=&\frac{s_{2K+2} }{2}[F,\s_3] + s_{2K+2} F\s_3
-s_{2K+2} F\s_3 -\frac 1{\l^2}  F\s_+ + \frac 1{\l^2} \s_+F=\\
=&\frac{s_{2K+2} }{2}[F,\s_3]  - \frac 1{\l^2}[F, \s_+]
\ee
\QED

\bibliographystyle{abbrv}

\end{document}